\documentclass[12pt,a4paper,leqno]{amsart}

\usepackage[latin1]{inputenc}
\usepackage[T1]{fontenc}
\usepackage{amsfonts}
\usepackage{amsmath}
\usepackage{amssymb}
\usepackage{eurosym}
\usepackage{mathrsfs}
\usepackage{palatino}

\newcommand{\R}{\mathbb{R}}
\newcommand{\C}{\mathbb{C}}

\newcommand{\Z}{\mathbb{Z}}

\newcommand{\ave}[1]{\langle #1 \rangle}
\numberwithin{equation}{section}

\swapnumbers
\theoremstyle{plain}
\newtheorem{thm}[equation]{Theorem}
\newtheorem{lem}[equation]{Lemma}
\newtheorem{prop}[equation]{Proposition}

\theoremstyle{definition}
\newtheorem{defn}[equation]{Definition}

\newtheorem{exmp}[equation]{Example}

\theoremstyle{remark}
\newtheorem{rem}[equation]{Remark}

\pagestyle{headings}

\addtolength{\hoffset}{-1.15cm}
\addtolength{\textwidth}{2.3cm}
\addtolength{\voffset}{0.45cm}
\addtolength{\textheight}{-0.9cm}

\title{On general local $Tb$ theorems}

\author{Tuomas Hytönen \and Henri Martikainen}\thanks{The authors are supported by the Academy of Finland through the project ``$L^p$ methods in harmonic analysis''.}
\address{Department of Mathematics and Statistics, University of Helsinki, P.O.B. 68, FI-00014 Helsinki, Finland}
\email{tuomas.hytonen@helsinki.fi, henri.martikainen@helsinki.fi}

\subjclass[2000]{42B20 (Primary);  42B25 (Secondary)}
\keywords{Calder\'on--Zygmund operator, non-homogeneous theory, square function}

\thispagestyle{empty}
\begin{document}
\maketitle

\begin{abstract}
In this paper, local $Tb$ theorems are studied both in the doubling and non-doubling situation. We prove a local $Tb$ theorem for the class of upper doubling measures. With such general measures, scale invariant testing conditions are required ($L^{\infty}$ or BMO).
In the case of doubling measures, we also modify the general non-homogeneous method of proof to yield a new proof of the local $Tb$ theorem with $L^2$ type testing conditions. 
\end{abstract}

\section{Introduction}
There are multiple local $Tb$ theorems with a bit different assumptions. In these theorems, one assumes that to every cube $Q$ there exist functions $b^1_Q$ and $b^2_Q$, supported on $Q$,
so that we also know something about $Tb^1_Q$ and $T^*b^2_Q$ (where $T$ is a Calder\'on--Zygmund operator). One wishes to conclude that $T\colon L^2 \to L^2$ boundedly so that $\|T\|$ has
natural dependence on the assumptions.

The first local $Tb$ theorem is by Michael Christ \cite{Ch}, and there it was assumed that $\|b^1_Q\|_{\infty} \le C$ and $\|Tb^1_Q\|_{\infty} \le C$ (and similarly for $b^2_Q$ and $T^*b^2_Q$).
This was proven for doubling measures (even in metric spaces). Nazarov, Treil and Volberg \cite{NTVa} obtained a version of this theorem for measures satisfying the power bound $\mu(B(x,r)) \lesssim r^m$ for a given number $m$. So it is a non-homogeneous version of Christ's theorem in $\R^n$
(it also allows BMO control in the operator side if the kernel of $T$ is antisymmetric).

For doubling measures, one can also consider more general $L^p$ type testing conditions introduced by Auscher, Hofmann, Muscalu, Tao and Thiele \cite{AHMTT}, and further studied by Hofmann \cite{Ho}, Auscher and Yang \cite{AY} and Tan and Yan \cite{TY}.
The most general assumption used in these papers is of the form that $\int_Q |b^1_Q|^p \le |Q|$, $\int_Q |b^2_Q|^q \le |Q|$, $\int_Q |Tb^1_Q|^{q'} \le |Q|$ and $\int_Q |T^*b^2_Q|^{p'} \le |Q|$, where $s'$ denotes the dual exponent of $s$ and $1 < p, q \le \infty$.

In \cite{AHMTT} a theorem of this type is proved only for very special operators, the so-called perfect dyadic singular integral operators. This was expected to easily generalize
for all Calder\'on--Zygmund operators -- but this turned out not to be the case (it being easy, at least). In \cite{Ho} the theorem is extended
for standard Calder\'on--Zygmund operators, but only in the case $\int_Q |b^1_Q|^s \le |Q|$, $\int_Q |b^2_Q|^s \le |Q|$, $\int_Q |Tb^1_Q|^{2} \le |Q|$ and $\int_Q |T^*b^2_Q|^{2} \le |Q|$
for some $s > 2$. Finally, \cite{AY} establishes, by reducing the question to the known case of perfect dyadic operators,
the theorem for standard Calder\'on--Zygmund operators in the case $1/p + 1/q \le 1$. The case $1/p + 1/q > 1$ is still open for general Calder\'on--Zygmund operators.

Recently we extended global $Tb$ theorems to the general setting of upper doubling measures in metric spaces \cite{HM}. It is our opinion that the upper doubling theory constitutes a flexible framework yielding, in particular, proofs that work simultaneously for doubling and non-doubling measures. See also \cite{Hy1}, \cite{Ma1}, \cite{HYY}.

The purpose of this paper is twofold. We extend, streamline and modify the general non-homogeneous proof technique of Nazarov, Treil and Volberg \cite{NTVa} to the case of upper doubling measures $\mu$.
Also, considering doubling measures $\nu$, we show that this technique, appropriately modified, can also be used to prove local $Tb$ theorems with $L^p$ type testing conditions. As the general case $1/p + 1/q > 1$ is still quite open, a completely new
approach may be appreciated. We give such a proof in the case of $\int_Q |b^1_Q|^2\,d\nu \le \nu(Q)$, $\int_Q |b^2_Q|^2\,d\nu \le \nu(Q)$, $\int_Q |Tb^1_Q|^s\,d\nu \le \nu(Q)$ and $\int_Q |T^*b^2_Q|^s\,d\nu \le \nu(Q)$ for any $s>2$.
So this is basically the theorem of Hofmann \cite{Ho}, but with a general doubling measure and with the extra integrability assumptions pushed into the operator side, allowing to demand less integrability from the test functions.

Regarding non-homogeneous analysis in this local situation, there seems to be a previously unnoticed problem with the use of goodness and the implication it may have on the collapse of certain paraproducts. In any case, we add the good cubes into the decomposition in a new way so that no such problem may arise.

While giving finishing touches to this paper, we also learned about a recent related manuscript by Auscher and Routin \cite{AR}. Using the local $T1$ theorem, the so-called {BCR} algorithm, and Hardy inequalities, some partial progress on the case $1/p +  1/q > 1$ is achieved there. However, many technical assumptions appear. Furthermore, they also establish a direct proof (in the sense that it is not a reduction to the perfect dyadic case like \cite{AY}) of the case $1/p + 1/q \le 1$.

Our proof of the local $Tb$ theorem with $L^2$ test functions is also direct in many senses. As it uses this general non-homogeneous proof technique, it does not rely on the standard local $T1$ theorem unlike all the other known proofs seem to do. Also, it is not a reduction to the perfect dyadic case. The use of the Hardy type inequalities is completely replaced by the use of non-homogeneous analysis. Such techniques
also circumvent many problems with nearby cubes and boundary regions.

\subsection*{Acknowledgements}
Parts of this paper benefited from the interaction with a related on-going project of the first author with Antti V\"ah\"akangas. We would like to thank him for this fruitful exchange of ideas.

\section{Definitions and the main result}
\subsection{Upper doubling measures and Calder\'on--Zygmund operators}
Let $\lambda\colon \R^n \times (0,\infty) \to (0,\infty)$ be a function so that $r \mapsto \lambda(x,r)$ is non-decreasing and $\lambda(x, 2r) \le C_{\lambda}\lambda(x,r)$ for
all $x \in \R^n$ and $r > 0$. Let $\mu$ be a Borel measure in $\R^n$. We assume that $\mu$ is upper doubling with the dominating function $\lambda$, that is, $\mu(B(x,r)) \le
\lambda(x,r)$ for all $x \in \R^n$ and $r > 0$. In the case of doubling measures one can take $\lambda(x,r) = \mu(B(x,r))$, and in the case of power bounded measures ($\mu(B(x,r)) \le Cr^m$), one can take $\lambda(x,r) = Cr^m$.
Let $d = \log_2 C_{\lambda}$ -- this is a convenient number for us, and can be thought of as a dimension of the measure $\mu$.

The kernel estimates are always tied to the particular choice of $\lambda$. We say that $K \colon \R^n \times \R^n \setminus \{(x,y): x = y\} \to \C$ is a standard kernel if there holds
for some $C < \infty$ and $\alpha > 0$ that
\begin{displaymath}
|K(x,y)| \le C\min\Big(\frac{1}{\lambda(x, |x-y|)}, \frac{1}{\lambda(y, |x-y|)}\Big), \qquad x \ne y,
\end{displaymath}
\begin{displaymath}
|K(x,y) - K(x',y)| \le C\frac{|x-x'|^{\alpha}}{|x-y|^{\alpha}\lambda(x, |x-y|)}, \qquad |x-y| \ge 2|x-x'|,
\end{displaymath}
and
\begin{displaymath}
|K(x,y) - K(x,y')| \le C\frac{|y-y'|^{\alpha}}{|x-y|^{\alpha}\lambda(y, |x-y|)}, \qquad |x-y| \ge 2|y-y'|.
\end{displaymath}
These are the familiar standard estimates in the case of doubling and power bounded measures. Let $\gamma = \alpha/(2\alpha + 2d)$ -- another convenient number.

Sometimes the property $\lambda(x, |x-y|) \sim \lambda(y, |x-y|)$ would be convenient. This can be arranged as follows. In \cite[Proposition 1.1]{HYY} it is shown that
$\Lambda(x,r) := \inf_{z \in \R^n} \lambda(z, r + |x-z|)$ satisfies that $r \mapsto \Lambda(x,r)$ is non-decreasing, $\Lambda(x,2r) \le C_{\lambda}\Lambda(x,r)$,
$\mu(B(x,r)) \le \Lambda(x,r)$, $\Lambda(x,r) \le \lambda(x,r)$ and $\Lambda(x,r) \le C_{\lambda}\Lambda(y,r)$ if $|x-y| \le r$. Even the kernel estimates hold
with $\Lambda$, since $1/\lambda \le 1/\Lambda$. Thus, we may (and do) assume that $\lambda$ satisfies the additional symmetry property
$\lambda(x,r) \le C\lambda(y,r)$ if $|x-y| \le r$, and then demand the kernel estimates in the form
\begin{displaymath}
|K(x,y)| \le \frac{C}{\lambda(x, |x-y|)}, \qquad x \ne y,
\end{displaymath}
\begin{displaymath}
|K(x,y) - K(x',y)| \le C\frac{|x-x'|^{\alpha}}{|x-y|^{\alpha}\lambda(x, |x-y|)}, \qquad |x-y| \ge 2|x-x'|,
\end{displaymath}
and
\begin{displaymath}
|K(x,y) - K(x,y')| \le C\frac{|y-y'|^{\alpha}}{|x-y|^{\alpha}\lambda(x, |x-y|)}, \qquad |x-y| \ge 2|y-y'|.
\end{displaymath}

A Calder\'on--Zygmund operator with a standard kernel $K$ is a bounded linear operator $T$ taking $L^2(\mu)$ into $L^2(\mu)$ so that there holds
\begin{displaymath}
Tf(x) = \int K(x,y)f(y)\,d\mu(y)
\end{displaymath}
for $x$ not in the support of $f$.

Note that while we assume the boundedness of $T$ a priori, we are interested in quantitative bounds for $\|T\|$, which only depend on some specified information.

\subsection{Systems of accretive functions}
When working with a general upper doubling measure $\mu$, we assume that to every cube $Q \subset \R^n$ there exist two functions $b^1_Q$ and $b^2_Q$ so that there holds
\begin{itemize}
\item[(i)] spt$\,b^1_Q \subset Q$, spt$\,b^2_Q \subset Q$;
\item[(ii)] $\|b^1_Q\|_{L^{\infty}(\mu)} \le C$, $\|b^2_Q\|_{L^{\infty}(\mu)} \le C$;
\item[(iii)] $\|Tb^1_Q\|_{L^{\infty}(\mu)} \le C$, $\|T^*b^2_Q\|_{L^{\infty}(\mu)} \le C$;
\item[(iv)] $\int_Q b^1_Q \,d\mu = \mu(Q) = \int_Q b^2_Q\,d\mu$.
\end{itemize}
We call these \emph{accretive $L^{\infty}$ systems}.
At least in the case of an antisymmetric kernel, one could make do with BMO control in the operator side (see \cite{NTVa}), but we focus only on this case.

When working with a doubling measure $\nu$, we may also use the following set of assumptions: to every cube $Q \subset \R^n$ there exist two
functions $b^1_Q$ and $b^2_Q$ so that there holds
\begin{itemize}
\item[(i)] spt$\,b^1_Q \subset Q$, spt$\,b^2_Q \subset Q$;
\item[(ii)] $\int_Q |b^1_Q|^2\,d\nu \le C\nu(Q)$, $\int_Q |b^2_Q|^2\,d\nu \le C\nu(Q)$;
\item[(iii)] $\int_Q |Tb^1_Q|^s\,d\nu \le C\nu(Q)$, $\int_Q |T^*b^2_Q|^s\,d\nu \le C\nu(Q)$ for some fixed $s>2$;
\item[(iv)] $\int_Q b^1_Q = \nu(Q) = \int_Q b^2_Q$.
\end{itemize}
We call these \emph{accretive $L^2$ systems} (suppressing from the name the fact that we actually impose the somewhat stronger $L^s$ conditions in (iii)).

We now formulate our main theorem.

\begin{thm}
Let $\mu$ be an upper doubling measure with a dominating function $\lambda$ and $T\colon L^2(\mu) \to L^2(\mu)$ a Calder\'on--Zygmund operator with a standard kernel $K$.
Assuming the existence of accretive $L^{\infty}$ systems $(b^1_Q)$ and $(b^2_Q)$, we have $\|T\| \le C$, where
$C$ depends on the dimension $n$ and on the explicit constants in the definitions of $\lambda$, $K$, $(b^1_Q)$ and $(b^2_Q)$.

If $\mu = \nu$ for some doubling measure $\nu$, then the same conclusion holds assuming only the existence of accretive $L^2$ systems $(b^1_Q)$ and $(b^2_Q)$.
\end{thm}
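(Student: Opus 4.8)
The plan is to follow the non-homogeneous $Tb$ machinery of Nazarov--Treil--Volberg, adapted to upper doubling measures as in \cite{HM}, and to run both cases in parallel so that the $L^{\infty}$ and $L^2$ proofs share as much structure as possible. The first step is the usual random dyadic lattice construction: build two independent random dyadic grids $\mathcal{D}_1, \mathcal{D}_2$ (in the upper doubling metric setting one uses the probabilistic constructions available for such grids), and reduce $\langle Tf, g\rangle$ to a sum over pairs $(I,J)$ with $I \in \mathcal{D}_1$, $J \in \mathcal{D}_2$, after expanding $f$ and $g$ in suitable \emph{$b$-adapted} martingale differences. Here one replaces the ordinary Haar projections by projections built from $b^1_I$ (for $f$) and $b^2_J$ (for $g$): roughly $\Delta_I^{b^1} f$ encodes the difference between the $b^1$-weighted averages of $f$ on $I$ and on its children. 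The accretivity condition (iv) (with the $L^\infty$ or $L^2$ bound on $b$ in (ii)) is what makes these projections bounded and gives the square function estimate $\sum_I \|\Delta_I^{b^1} f\|_{L^2(\mu)}^2 \lesssim \|f\|_{L^2(\mu)}^2$; this is where the stopping-time / Carleson argument for constructing the accretive system's stopping cubes enters.

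The second step is the reduction to good cubes. One calls $J$ \emph{good} (with respect to $I$) if $J$ is not too close to the boundary of any much larger $I$ and not too small relative to its distance to $\partial I$; by averaging over the random grids one shows the contribution of bad cubes is negligible, so it suffices to bound the sum over good pairs. The paper flags that the standard way of inserting goodness interacts badly with the collapse of certain paraproducts in the \emph{local} setting, so here I would incorporate the good cubes into the decomposition in the modified way the authors allude to — keeping the good/bad splitting but arranging the paraproterm bookkeeping so that no illegitimate cancellation is invoked. Then the good sum splits, as usual, into: (a) the separated/long-range part $\operatorname{dist}(I,J)\gtrsim \ell(I)^\gamma\ell(J)^{1-\gamma}$ or $\ell(J)\ll\ell(I)$ with $J$ far from $\partial I$, handled by the kernel decay and Hölder estimates together with the $L^\infty$-normalization (for the upper doubling case) of $b$ and $Tb$; (b) the nested part with $\ell(J)\ll\ell(I)$ but $J\subset I$, which produces a paraproduct tested on $Tb^1_{I}$ and controlled by assumption (iii); and symmetric terms with the roles of $f,g$ and $b^1,b^2$ swapped; plus (c) the diagonal $\ell(I)\sim\ell(J)$, $\operatorname{dist}(I,J)\lesssim\ell(I)$, which is finite and estimated crudely.

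The key new point, and where I expect the bulk of the work, is the doubling $L^2$ case. Here the test functions $b^1_Q$ are only in $L^2(\nu)$, not $L^\infty$, so the martingale differences $\Delta_I^{b^1}$ are not a priori bounded and the pointwise kernel manipulations that used $\|b^1_Q\|_\infty\le C$ must be replaced. The strategy is to exploit that $\nu$ is doubling together with the strengthened $L^s$ control, $s>2$, on $Tb^1_Q$ and $T^*b^2_Q$: the extra integrability on the operator side is exactly what one gains by pushing the burden off the test functions. Concretely, for the paraproduct and long-range terms one runs a further stopping-time decomposition where one stops both when the $b^1$-average degenerates and when local $L^2$-averages of $|b^1_I|^2$ or $|Tb^1_I|^{2}$ grow; the $L^s$ bound, $s>2$, makes the resulting ``bad'' part summable via a reverse-Hölder / Kolmogorov-type argument, while on the good part one has the effective $L^\infty$-type control needed to rerun the $L^\infty$ estimates. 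The doubling hypothesis is used freely here to pass between $\nu(Q)$ and $\nu$ of dilates/children and to run the non-homogeneous estimates with $\lambda(x,r)=\nu(B(x,r))$. Non-homogeneous analysis then replaces the Hardy-type inequalities of the earlier $L^p$ proofs, and in particular the delicate treatment of cubes near $\partial I$ is absorbed into the bad-cube estimate rather than argued by hand. The main obstacle is making the double stopping-time construction (accretivity stopping times intertwined with $L^s$-integrability stopping times) produce genuinely Carleson-summable collections while keeping the paraproduct collapse legitimate in the local, non-homogeneous setting.
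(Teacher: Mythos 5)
Your overall architecture matches the paper's: random dyadic grids, $b$-adapted martingale differences built from a stopping time, a split into separated / nested / comparable pairs, a paraproduct controlled by the testing condition on $Tb^1_Q$, and absorption of terms carrying a factor $\|T\|$. But two points that you treat as routine are precisely where the paper has to do something new, and your proposal does not supply the missing ideas.

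First, the good/bad reduction. You write that "by averaging over the random grids one shows the contribution of bad cubes is negligible, so it suffices to bound the sum over good pairs," and then, acknowledging the paper's warning, say you would "incorporate the good cubes in the modified way the authors allude to." That is the gap, not a filled step. The standard reduction $f=f_{\mathrm{good}}+f_{\mathrm{bad}}$ has two problems here: (i) estimating $\|f_{\mathrm{bad}}\|_2$ normally uses the dual square function estimate $\sum_Q\|(\Delta_Q)^*f\|_2^2\lesssim\|f\|_2^2$, which the paper shows by an explicit counterexample is \emph{false} for accretive $L^2$ systems (only a substitute restricted to a fixed stopping ancestor survives); and (ii) even granting that reduction, the operators $\Delta_Q$ are not orthogonal projections, so restricting \emph{both} cubes in the double sum to good ones destroys the telescoping that collapses the paraproduct. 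The paper's actual device is to insert goodness only into the regime $\ell(Q)\le 2^{-r}\ell(R)$, declaring $Q$ bad relative to \emph{all} $\mathcal{D}'$-cubes of side length $\ge\ell(R)$, estimating that bad piece by $c(r)\|T\|$ directly (using only the non-dual square function estimate) and absorbing it; the larger cubes $R$ then remain unrestricted and the telescoping collapses to a cube $S(Q)$ that may be arbitrarily many generations above $Q$.

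Second, the role of $s>2$. You place it in a "reverse-Hölder / Kolmogorov" argument giving "effective $L^\infty$-type control" on a good part. In the paper the exponent $s>2$ is used in exactly one place: proving that the paraproduct coefficient sequences $a_R=\sum_{F(Q)=R}\|(\Delta_Q)^*T^*b^2_{R^a}\|_2^2$ and the analogous $b_R$ are Carleson, via H\"older with $p=s/2>1$ against the $L^{p'}$ norm of the overlap function of stopping cubes (which is controlled by the geometric decay $\tau^{j}$ of the stopping generations). The reason $s=2$ does not suffice is a direct consequence of the goodness mechanism above: since $\nu(S(Q))$ need not be comparable to $\nu(Q)$, one cannot sum with the endpoint exponent. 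So the need for $s>2$ and the modified goodness insertion are two sides of the same coin, and neither is recoverable from the generic NTV template you describe. Until you specify how the good cubes enter the decomposition without blocking the paraproduct collapse, and how the resulting Carleson estimates are closed, the proof is not complete.
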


The rest of this paper contains a direct proof which simultaneously gives the theorem with either set of assumptions. In particular, the proof is neither a reduction to a local $T1$ theorem, nor to a perfect dyadic case.
Notation-wise we work so that we use $\mu$ as long as everything works with the use of either set of assumptions, and sometimes
write $\mu = \nu$ when we explicitly estimate differently in the doubling $L^2$ case. We write $X \lesssim Y$ to mean
$X \le CY$ with some constant $C$ like in the theorem. Also, $X \sim Y$ means $Y \lesssim X \lesssim Y$. Sometimes
we absorb other parameters, but then it is either explicitly said or written in the notation (e.g. $X \lesssim_{\delta} Y$ would mean
$X \le C(\delta)Y$).
\begin{rem}
While the second part of the theorem concerning the $L^2$ test function case is not new, the proof is. Certainly some new ideas are still needed to establish the theorem with general $p$ and $q$. However,
the point is not solely in the range of exponents. For example, we point out that the non-homogeneous proof technique completely avoids the use of the so called Hardy type inequalities used and studied in \cite{AR}.
\end{rem}

\section{Preliminaries}
We begin by recording the following basic facts. Let a dyadic system $\mathcal{D}$ be given. The side length of a cube $Q\in\mathcal{D}$ is denoted $\ell(Q)$, and
$Q^{(j)}$ denotes the unique cube $S \in \mathcal{D}$ for which $Q \subset S$ and $\ell(S) = 2^j\ell(Q)$. We also set
$\langle f \rangle_Q = \mu(Q)^{-1} \int_Q f \,d\mu$.
\begin{defn}
We say that a sequence $(a_Q)_{Q \in \mathcal{D}}$ of positive numbers is a Carleson sequence, if there holds
\begin{displaymath}
\mathop{\sum_{S \in \mathcal{D}}}_{S \subset Q} a_S \le C\mu(Q)
\end{displaymath}
for every $Q \in \mathcal{D}$. The condition is called the Carleson (measure) condition.
\end{defn}
The following is the famous Carleson embedding theorem.
\begin{thm}
Given a Carleson sequence $(a_Q)$ there holds for any $f \in L^2(\mu)$ that
\begin{displaymath}
\sum_{Q \in \mathcal{D}} a_Q|\langle f \rangle_Q|^2 \le C\|f\|_{L^2(\mu)}^2.
\end{displaymath}
\end{thm}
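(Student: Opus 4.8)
The plan is to reduce the Carleson embedding to the (uniform-in-$\mu$) $L^2(\mu)$-boundedness of the dyadic maximal function, by a level-set decomposition. First I would observe that $|\langle f \rangle_Q| \le \langle |f| \rangle_Q$, so replacing $f$ by $|f|$ only increases the left-hand side while leaving the right-hand side unchanged; hence we may assume $f \ge 0$. We may also assume the sum runs only over cubes $Q$ with $\mu(Q) > 0$, since taking $Q = S$ in the Carleson condition shows $a_S = 0$ whenever $\mu(S) = 0$. Introduce the dyadic maximal function $M_{\mathcal{D}}f(x) = \sup\{ \langle f \rangle_Q : Q \in \mathcal{D},\ x \in Q,\ \mu(Q) > 0 \}$. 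The one black box I will use is that $\| M_{\mathcal{D}} f \|_{L^2(\mu)} \le C \| f \|_{L^2(\mu)}$ with $C$ absolute: the $L^\infty$ bound is trivial, and the weak-type $(1,1)$ bound $\mu(\{M_{\mathcal{D}}f > \lambda\}) \le \lambda^{-1}\|f\|_{L^1(\mu)}$ follows at once from the pairwise disjointness of the maximal dyadic cubes on which $\langle f \rangle_Q > \lambda$ (no covering lemma is needed, which is exactly why the constant is independent of $\mu$); Marcinkiewicz interpolation then yields the $L^2$ bound.

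Next, fix $\lambda > 0$. After the routine reduction to summing over the subcubes of an arbitrarily large dyadic cube $Q_0$ (and letting $Q_0 \uparrow \R^n$ at the end by monotone convergence), the family of cubes $Q \subseteq Q_0$ with $\langle f \rangle_Q > \lambda$ has maximal elements $\{Q_j\}$; these are pairwise disjoint, each such $Q$ lies in exactly one $Q_j$, and each $Q_j \subseteq \{M_{\mathcal{D}}f > \lambda\}$. Applying the Carleson condition inside every $Q_j$,
$$
\mathop{\sum_{Q \subseteq Q_0}}_{\langle f \rangle_Q > \lambda} a_Q \le \sum_j \mathop{\sum_{S \in \mathcal{D}}}_{S \subset Q_j} a_S \le C \sum_j \mu(Q_j) \le C\, \mu\big( \{ M_{\mathcal{D}} f > \lambda \} \big).
$$

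Finally I would integrate in $\lambda$ via the layer-cake formula. Writing $|\langle f \rangle_Q|^2 = \int_0^\infty 2\lambda\, \mathbf{1}_{\{\langle f \rangle_Q > \lambda\}}\, d\lambda$ and using Tonelli together with the previous display,
$$
\sum_{Q \subseteq Q_0} a_Q |\langle f \rangle_Q|^2 = \int_0^\infty 2\lambda \mathop{\sum_{Q \subseteq Q_0}}_{\langle f \rangle_Q > \lambda} a_Q \, d\lambda \le C \int_0^\infty 2\lambda\, \mu(\{ M_{\mathcal{D}} f > \lambda \})\, d\lambda = C \| M_{\mathcal{D}} f \|_{L^2(\mu)}^2 \le C' \| f \|_{L^2(\mu)}^2 ,
$$
and letting $Q_0 \uparrow \R^n$ finishes the argument.

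The only substantive ingredient is the $L^2(\mu)$-boundedness of $M_{\mathcal{D}}$, and since the covering step is trivial for dyadic cubes this holds with a constant independent of the (possibly non-doubling) measure $\mu$; the remaining points — the existence of the stopping cubes $Q_j$ and the Tonelli/layer-cake manipulation — are purely bookkeeping, so I do not anticipate any real obstacle. (If one prefers, the same estimate follows from a direct stopping-time/energy argument avoiding the maximal function altogether, but the route above is the shortest.)
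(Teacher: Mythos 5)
Your argument is correct. Note first that the paper itself offers no proof of this statement: it is quoted as "the famous Carleson embedding theorem" and used as a black box, so there is nothing internal to compare against. What you have written is the classical level-set proof, and all the ingredients check out: the reduction to $f\ge 0$; the fact that the dyadic maximal operator $M_{\mathcal{D}}$ is of weak type $(1,1)$ with respect to an \emph{arbitrary} Borel measure $\mu$ (because maximal dyadic cubes are pairwise disjoint, so no covering lemma and no doubling is needed), hence bounded on $L^2(\mu)$ by interpolation; the observation that the maximal cubes $Q_j$ of the family $\{Q:\langle f\rangle_Q>\lambda\}$ are disjoint, exhaust that family, and sit inside $\{M_{\mathcal{D}}f>\lambda\}$, so that the Carleson condition applied in each $Q_j$ gives $\sum_{\langle f\rangle_Q>\lambda}a_Q\le C\mu(\{M_{\mathcal{D}}f>\lambda\})$; and the layer-cake integration in $\lambda$. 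You were also right to worry about the existence of the maximal cubes and to handle it by first truncating to subcubes of a fixed large $Q_0$ and passing to the limit by monotone convergence; in the setting of the paper this is even automatic, since the grids $\mathcal{D}$, $\mathcal{D}'$ are rooted at fixed top cubes $Q_0$, $R_0$ and only the cubes inside them are ever used. One could alternatively run a stopping-time argument avoiding the maximal function, as you remark, but the route you chose is the standard one and is complete as written.
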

The following is called the unweighted square function estimate.
\begin{thm}\label{usfe}
There holds for any $f \in L^2(\mu)$ that 
\begin{displaymath}
\sum_{Q \in \mathcal{D}} |\langle f \rangle_Q - \langle f \rangle_{Q^{(1)}}|^2\mu(Q) \le C\|f\|_{L^2(\mu)}^2.
\end{displaymath}
\end{thm}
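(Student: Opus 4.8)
The plan is to deduce this from the orthogonality of the dyadic martingale differences attached to $\mathcal{D}$ (the statement is essentially the $L^2$-boundedness of the square function, so the proof is standard). First I would, for each $k\in\mathbb{Z}$, set $\mathcal{D}_k=\{Q\in\mathcal{D}:\ell(Q)=2^{-k}\}$ and introduce the conditional expectation operators
\[
E_k f = \mathop{\sum_{Q \in \mathcal{D}_k}}_{\mu(Q) > 0} \langle f \rangle_Q \mathbf{1}_Q ,
\]
i.e.\ the conditional expectations onto the $\sigma$-algebras generated by the positive-measure cubes of generation $k$ (cubes of $\mu$-measure zero contribute nothing and are simply omitted). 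A short computation with the definition of $\langle\cdot\rangle_Q$ shows that each $E_k$ is self-adjoint and idempotent on $L^2(\mu)$, hence an orthogonal projection; moreover $\operatorname{ran} E_k \subset \operatorname{ran} E_{k+1}$, since $\mathcal{D}_{k+1}$ refines $\mathcal{D}_k$ and so a function constant on the cubes of $\mathcal{D}_k$ is constant on those of $\mathcal{D}_{k+1}$. Consequently $E_j E_k = E_{\min(j,k)}$.

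Next I would put $D_k = E_{k+1} - E_k$, which is then the orthogonal projection onto $\operatorname{ran} E_{k+1} \ominus \operatorname{ran} E_k$; in particular the $D_k$ are pairwise orthogonal, and for every $N$ the telescoping partial sum $\sum_{k=-N}^{N} D_k = E_{N+1} - E_{-N}$ is again an orthogonal projection, hence a contraction on $L^2(\mu)$. By the Pythagorean theorem,
\[
\sum_{k=-N}^{N} \|D_k f\|_{L^2(\mu)}^2 = \Big\| \sum_{k=-N}^{N} D_k f \Big\|_{L^2(\mu)}^2 = \|(E_{N+1} - E_{-N}) f\|_{L^2(\mu)}^2 \le \|f\|_{L^2(\mu)}^2 ,
\]
and letting $N \to \infty$ gives $\sum_{k \in \mathbb{Z}} \|D_k f\|_{L^2(\mu)}^2 \le \|f\|_{L^2(\mu)}^2$. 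Note that no martingale convergence statement is used: the only ingredients are that each $E_k$ is an orthogonal projection and that the partial sums of the $D_k$ telescope back to such a projection.

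Finally I would identify the left-hand side. For $Q \in \mathcal{D}_k$ with $\mu(Q) > 0$, on each child $Q' \in \mathcal{D}_{k+1}$ of $Q$ one has $E_{k+1} f = \langle f \rangle_{Q'}$ and $E_k f = \langle f \rangle_Q = \langle f \rangle_{(Q')^{(1)}}$, so that $D_k f = \sum_{Q' \in \mathcal{D}_{k+1},\, \mu(Q') > 0} \big( \langle f \rangle_{Q'} - \langle f \rangle_{(Q')^{(1)}} \big) \mathbf{1}_{Q'}$ and
\[
\|D_k f\|_{L^2(\mu)}^2 = \mathop{\sum_{Q' \in \mathcal{D}_{k+1}}}_{\mu(Q') > 0} \big| \langle f \rangle_{Q'} - \langle f \rangle_{(Q')^{(1)}} \big|^2 \mu(Q') .
\]
Summing over $k \in \mathbb{Z}$, each cube of $\mathcal{D}$ occurs exactly once (as the child of its unique parent) and the measure-zero cubes drop out, so the total equals $\sum_{Q \in \mathcal{D}} |\langle f \rangle_Q - \langle f \rangle_{Q^{(1)}}|^2 \mu(Q)$; this even gives the estimate with $C=1$.

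This argument has essentially no hard core, so there is no real obstacle; the one place that needs care is the bookkeeping forced by a general, possibly non-doubling, $\mu$: one must consistently restrict every sum to cubes of positive $\mu$-measure and check that $E_k$ is then a genuine orthogonal projection — which it is, being a conditional expectation onto a countably generated sub-$\sigma$-algebra.
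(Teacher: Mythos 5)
Your proof is correct. The paper states this result without proof, citing it as a standard fact (``the unweighted square function estimate''), and your argument via the orthogonality of the conditional expectations $E_k$ and the Pythagorean identity for the telescoping projections $D_k=E_{k+1}-E_k$ is exactly the standard one it implicitly relies on; your bookkeeping for cubes of $\mu$-measure zero is the right way to handle a general non-doubling $\mu$, and you even obtain the sharp constant $C=1$.
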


\subsection{Stopping times and the martingale difference operators $\Delta_Q$}
Let $\mathcal{D}$ be a dyadic system of cubes, and let $Q_0 \in \mathcal{D}$ be a fixed large cube. Let $\mathcal{D}^0 = \{Q_0\}$.

\subsubsection{Stopping time: $L^{\infty}$ case}
Let $\mathcal{D}^1 = \{Q^k_1\}_k$ consist of the maximal $\mathcal{D}$-cubes $Q \subset Q_0$ for which there holds
\begin{displaymath}
\Big| \int_Q b^1_{Q_0}\,d\mu \Big| < \mu(Q)/2.
\end{displaymath}
One easily checks that
\begin{displaymath}
\mu\Big( \bigcup_k Q^k_1 \Big) \le \tau \mu(Q_0)
\end{displaymath}
for some $\tau < 1$.

Next, one fixes a cube $Q^k_1$ and considers all the maximal $\mathcal{D}$-cubes $Q \subset Q^k_1$ for which there holds
\begin{displaymath}
\Big| \int_Q b^1_{Q^k_1}\,d\mu \Big| < \mu(Q)/2.
\end{displaymath}
One does this for every $Q^k_1 \in \mathcal{D}^1$, and then the resulting collection of cubes is called $\mathcal{D}^2 = \{Q^k_2\}_k$. One proceeds like this to obtain collections
$\mathcal{D}^j$ for every $j$. Of course, we have the property that for every $Q \in \mathcal{D}^j$ there holds
\begin{displaymath}
\mu\Big( \bigcup_{Q' \in \mathcal{D}^{j+1}, \, Q' \subset Q} Q' \Big) \le \tau\mu(Q).
\end{displaymath}

\subsubsection{Stopping time: $L^2$ case}
Define
\begin{displaymath}
M_{\nu}f(x) = \sup_{r > 0} \frac{1}{\nu(B(x,r))}\int_{B(x,r)} |f(y)|\,d\nu(y).
\end{displaymath}
Let $\mathcal{D}^1 = \{Q^k_1\}_k$ consist of the maximal $\mathcal{D}$-cubes $Q \subset Q_0$ for which there holds
\begin{displaymath}
\int_Q |M_{\nu}b^1_{Q_0}|^2\,d\nu > \delta^{-1}\nu(Q)
\end{displaymath}
or
\begin{displaymath}
\int_Q |Tb^1_{Q_0}|^s\,d\nu > \delta^{-1}\nu(Q)
\end{displaymath}
or
\begin{displaymath}
\Big| \int_Q b^1_{Q_0} \,d\nu \Big|< \delta\nu(Q).
\end{displaymath}
Fixing $\delta$ to be small enough, one easily checks that
\begin{displaymath}
\nu\Big( \bigcup_k Q^k_1 \Big) \le \tau \nu(Q_0)
\end{displaymath}
for some $\tau < 1$. This is then continued just like in the $L^{\infty}$ case.

\subsubsection{Martingale difference operators}
For every $Q \subset Q_0$ we let $Q^a$ be the smallest cube in the family $\bigcup \mathcal{D}^j$ containing $Q$. Note that if $Q \subset Q_0$ is such that
$Q^a \in \mathcal{D}^t$, there holds for every $j \ge 1$ that
\begin{displaymath}
\mu\Big( \bigcup_{Q' \in \mathcal{D}^{t+j}, \, Q' \subset Q} Q'\Big) = \sum_{Q' \in \mathcal{D}^{t+j}, \, Q' \subset Q} \mu(Q') \le \tau^{j-1}\mu(Q).
\end{displaymath}
We state a very useful (but immediate) consequence of this as a lemma.
\begin{lem}\label{mescar}
The following is a Carleson sequence: $\alpha_Q = 0$ if $Q$ is not from $\bigcup_j \mathcal{D}^j$, and it equals $\mu(Q)$ otherwise.
\end{lem}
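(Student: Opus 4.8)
The plan is to check the Carleson condition $\sum_{S\in\mathcal{D},\,S\subset Q}\alpha_S\le C\mu(Q)$ directly from the exponential-decay estimate displayed immediately before the lemma. Since $\alpha_S=0$ unless $S\in\bigcup_j\mathcal{D}^j$, and every cube of $\bigcup_j\mathcal{D}^j$ lies in $Q_0$, it suffices to treat $Q\subset Q_0$: if $Q\supset Q_0$ then $\sum_{S\subset Q}\alpha_S=\sum_{S\subset Q_0}\alpha_S$, which — once the case $Q=Q_0$ is handled — is $\le C\mu(Q_0)\le C\mu(Q)$; and if $Q\cap Q_0=\emptyset$ the sum is empty (in a single dyadic system there are no further possibilities).

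So fix $Q\subset Q_0$ and let $t$ be the generation with $Q^a\in\mathcal{D}^t$. First I would observe that every $S\in\bigcup_j\mathcal{D}^j$ with $S\subset Q$ satisfies $S\subset Q^a$ and lies in some generation $\mathcal{D}^{t+j}$ with $j\ge 0$. Indeed, each $\mathcal{D}^m$ consists of pairwise disjoint cubes (maximal $\mathcal{D}$-subcubes of the pairwise disjoint cubes of $\mathcal{D}^{m-1}$), and along the stopping chain from $Q_0$ down to $Q^a$ each cube is a subcube of its predecessor; hence if $S\in\mathcal{D}^m$ with $S\subset Q\subset Q^a$ and $m<t$, then $S$ and the generation-$m$ ancestor of $Q^a$ would be two nested members of the disjoint family $\mathcal{D}^m$, forcing them equal and therefore $m=t$, a contradiction. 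The generation-$t$ cubes contained in $Q$ being disjoint, their measures sum to at most $\mu(Q)$.

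Now I would split the sum according to the generation:
\[
\sum_{\substack{S\subset Q\\ S\in\bigcup_j\mathcal{D}^j}}\mu(S)
=\sum_{\substack{S\in\mathcal{D}^t\\ S\subset Q}}\mu(S)
+\sum_{j\ge 1}\ \sum_{\substack{S\in\mathcal{D}^{t+j}\\ S\subset Q}}\mu(S)
\le\mu(Q)+\sum_{j\ge 1}\tau^{\,j-1}\mu(Q),
\]
the second term being estimated by the displayed bound preceding the lemma. Summing the geometric series yields the Carleson condition with $C=1+(1-\tau)^{-1}$. There is no genuine obstacle here — this is exactly the ``immediate'' consequence advertised in the text — and the only points deserving a word are the reduction to $Q\subset Q_0$ and the observation that a cube of $\bigcup_j\mathcal{D}^j$ contained in $Q$ cannot belong to a generation earlier than that of $Q^a$, which is where the pairwise disjointness of the families $\mathcal{D}^m$ enters.
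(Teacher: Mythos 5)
Your proof is correct and is exactly the argument the paper has in mind: the lemma is stated as an ``immediate'' consequence of the displayed decay estimate $\sum_{Q'\in\mathcal{D}^{t+j},\,Q'\subset Q}\mu(Q')\le\tau^{j-1}\mu(Q)$, and you have simply filled in the routine details (reduction to $Q\subset Q_0$, the observation that stopping cubes inside $Q$ lie in generations $\ge t$, and the geometric series). Nothing is missing.
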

Given a cube $Q$ let ch$(Q)$ consist of those cubes $Q' \subset Q$ for which $\ell(Q') = \ell(Q)/2$.
Let $f$ be a function supported on $Q_0$. We define
\begin{displaymath}
\Delta_Q f = \sum_{Q' \in \, \textrm{ch}(Q)} \Big[\frac{\langle f \rangle_{Q'}}{\langle b^1_{(Q')^a}\rangle_{Q'}}b^1_{(Q')^a} - \frac{\langle f \rangle_Q}{\langle b^1_{Q^a}\rangle_Q}b^1_{Q^a}\Big]\chi_{Q'}.
\end{displaymath}
Note that then we have
\begin{displaymath}
(\Delta_Q)^*f = \sum_{Q' \in \, \textrm{ch}(Q)} \Big[\frac{\langle b^1_{(Q')^a}f \rangle_{Q'}}{\langle b^1_{(Q')^a}\rangle_{Q'}} - \frac{\langle b^1_{Q^a}f \rangle_Q}{\langle b^1_{Q^a}\rangle_Q}\Big]\chi_{Q'}.
\end{displaymath}
Also set
\begin{displaymath}
E_{Q_0} = \frac{\langle f \rangle_{Q_0}}{\langle b^1_{Q_0}\rangle_{Q_0}}b^1_{Q_0}.
\end{displaymath}

\begin{lem}
The identity
\begin{equation}\label{decomp}
f = E_{Q_0}f + \sum_{Q \in \mathcal{D}} \Delta_Q f =E_{Q_0}f+\lim_{k\to\infty}\sum_{\substack{Q\in\mathcal{D} \\ \ell(Q)>2^{-k}}}\Delta_Q f
\end{equation}
holds both pointwise almost everywhere and in $L^2(\mu)$.
\end{lem}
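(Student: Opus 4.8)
The plan is to establish the decomposition \eqref{decomp} by a standard martingale-difference argument adapted to the non-homogeneous, accretive setting, with the main work going into the convergence statement rather than the formal telescoping identity. First I would verify the purely algebraic step: fix $k$ and consider the partial sum $S_k f = E_{Q_0} f + \sum_{Q \in \mathcal{D},\, \ell(Q) > 2^{-k}} \Delta_Q f$, restricted to $Q_0$. Writing $\Delta_Q f$ as a difference of the ``next-generation'' term and the ``current-generation'' term, one sees that the sum telescopes: on each cube $R \in \mathcal{D}$ with $\ell(R) = 2^{-k}$ and $R \subset Q_0$, the value of $S_k f$ on $R$ is exactly $\dfrac{\langle f \rangle_R}{\langle b^1_{R^a} \rangle_R} b^1_{R^a}\big|_R$ (the $E_{Q_0}$ term being consumed at the top and every intermediate term cancelling against the corresponding piece of its parent's $\Delta$). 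Here one must check that the denominators $\langle b^1_{R^a}\rangle_R$ are never zero, which is exactly the content of the stopping-time construction: $R^a$ is, by definition, the smallest stopping cube containing $R$, hence $R$ itself failed the stopping test for $b^1_{R^a}$, so $|\langle b^1_{R^a}\rangle_R| \ge 1/2$ in the $L^\infty$ case and $|\langle b^1_{R^a}\rangle_R| \ge \delta$ in the $L^2$ case. This gives the uniform lower bound on denominators that makes the $\Delta_Q$ well-defined and controls all the coefficients below.

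Next I would prove $L^2(\mu)$ convergence, i.e. $S_k f \to f$ in $L^2(\mu)$. Since $S_k f$ equals a pseudo-expectation $\tilde E_k f := \sum_{R :\, \ell(R) = 2^{-k}} \frac{\langle f \rangle_R}{\langle b^1_{R^a}\rangle_R} b^1_{R^a} \chi_R$, one writes $S_k f - f = (\tilde E_k f - E^{\mathcal D}_k f) + (E^{\mathcal D}_k f - f)$, where $E^{\mathcal D}_k f = \sum_{\ell(R) = 2^{-k}} \langle f\rangle_R \chi_R$ is the ordinary conditional expectation. The second difference tends to $0$ in $L^2(\mu)$ by the martingale convergence theorem (Lebesgue differentiation for $\mu$). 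For the first difference, on each $R$ one has $\tilde E_k f - E^{\mathcal D}_k f = \langle f\rangle_R\big(\frac{b^1_{R^a}}{\langle b^1_{R^a}\rangle_R} - 1\big)\chi_R$; using $\|b^1_{R^a}\|_{L^\infty(\mu)} \le C$ (or the $L^2$ bound $\int |b^1_{R^a}|^2 \le C\nu(R^a)$ together with doubling, in the $L^2$ case) and the uniform lower bound on $|\langle b^1_{R^a}\rangle_R|$, each term is bounded in $L^2(\mu)$ by a constant times $|\langle f\rangle_R|^2 \mu(R)$; summing and invoking the Carleson embedding theorem (Theorem preceding, with the trivially Carleson weight $\mu(R)$) and the fact that $\sum_R |\langle f\rangle_R|^2\mu(R) \le \|f\|_{L^2}^2 + \dots$ is controlled, one gets a uniform bound and then dominated convergence finishes the argument; alternatively one shows the first difference is supported, up to small $L^2$ mass, on the stopping region which has small measure at fixed generation. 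For the pointwise a.e. statement, one notes that for $\mu$-a.e.\ $x$ the Lebesgue differentiation theorem gives $\langle f\rangle_{R_k(x)} \to f(x)$ along the cubes $R_k(x) \ni x$ of generation $k$, and simultaneously $\langle b^1_{R_k(x)^a}\rangle_{R_k(x)} \to 1$ off the union of the stopping cubes down to the bottom --- but since for $\mu$-a.e.\ $x$ the chain of stopping cubes containing $x$ is eventually constant (equal to $x$'s deepest stopping cube, or the set where infinitely many stopping cubes occur has measure zero by the geometric decay $\tau^{j}\mu(Q)$ in Lemma~\ref{mescar}), one concludes $S_k f(x) \to f(x)$.

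The main obstacle is the interchange between the two limits in the first difference term: one cannot naively say $\frac{b^1_{R^a}}{\langle b^1_{R^a}\rangle_R} \to 1$ pointwise, because $b^1_{R^a}$ itself oscillates and $R^a$ changes with $k$. The resolution, and the technical heart of the lemma, is to split the bottom-generation cubes $R$ into those whose stopping ancestor $R^a$ has already stabilized (i.e.\ $R \subset R^a$ with $R^a \in \bigcup_j \mathcal D^j$ fixed and no further stopping cube strictly between $R^a$ and $R$ of size comparable to $R$) versus the exceptional ones lying inside deeper stopping cubes; the measure of the latter is controlled by the Carleson/geometric-decay estimate $\mu\big(\bigcup_{Q' \in \mathcal D^{t+j},\, Q'\subset Q} Q'\big) \le \tau^{j-1}\mu(Q)$ recorded just before Lemma~\ref{mescar}, so their contribution to $\|S_k f - f\|_{L^2}$ is $\lesssim \|f\|_{L^2}$ times a quantity tending to $0$. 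On the stabilized cubes, $b^1_{R^a}$ is a fixed $L^\infty$ (resp.\ $L^2$) function and ordinary martingale convergence applies directly. I would present this split cleanly, reducing everything to (a) classical martingale/Lebesgue differentiation convergence and (b) the two quantitative inputs already available, namely the uniform accretivity lower bound from the stopping time and the geometric decay of the stopping region.
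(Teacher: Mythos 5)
Your telescoping identification of the partial sums with the pseudo-expectation $\tilde E_k f = E_k f\cdot b^{a,1}_k/E_k b^{a,1}_k$, the lower bound on the denominators from the stopping construction, and the pointwise a.e.\ convergence via stabilization of the stopping ancestor plus Lebesgue differentiation are all exactly the paper's argument (one small slip: $\langle b^1_{R_k(x)^a}\rangle_{R_k(x)}$ converges to $b^1_S(x)$, not to $1$; what you need is that the \emph{ratio} $b^1_S(x)/\langle b^1_S\rangle_{R_k(x)}\to 1$, which holds since the denominator is bounded away from zero). The $L^2(\mu)$ convergence in the accretive $L^\infty$ case also goes through as you indicate, with $Mf$ as the dominating function.

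The genuine gap is in the accretive $L^2$ case. A uniform bound $\|\tilde E_k f - E_k f\|_2\lesssim\|f\|_2$ together with pointwise a.e.\ convergence does \emph{not} yield $L^2$ convergence, and you never exhibit a $k$-independent square-integrable majorant: the natural pointwise bound is $|\tilde E_k f(x)|\lesssim Mf(x)\,|b^1_{R_k(x)^a}(x)|$, and taking the supremum over $k$ produces $Mf\cdot\sup_{Q=Q^a}|b^1_Q|$, a product of two $L^2$ functions which is only in $L^1$. Your alternative route --- discarding the cubes whose stopping ancestor has not stabilized by depth $J$ --- controls only the \emph{measure} of the exceptional set via $\tau^{J}$; to conclude that its contribution to $\|S_kf-f\|_2$ is small \emph{uniformly in $k$} you would need uniform integrability of $|S_kf|^2$ over small sets, which is precisely the point at issue and is not justified. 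The missing step is the paper's two-line trick: first use the uniform operator bound $\|E^{a,1}_kf\|_2\lesssim\|f\|_2$ to reduce to $f$ bounded (a dense subclass); then for bounded $f$ one has the $k$-independent majorant
\begin{displaymath}
|E_k^{a,1}f(x)|\lesssim_f \sup_{Q=Q^a}|b^1_Q(x)|\le\Big(\sum_{Q=Q^a}|b^1_Q(x)|^2\Big)^{1/2},
\end{displaymath}
whose $L^2(\nu)$ norm is finite because $\sum_{Q=Q^a}\int_Q|b^1_Q|^2\,d\nu\lesssim\sum_{Q=Q^a}\nu(Q)\lesssim\nu(Q_0)$ by the Carleson property of the stopping cubes (Lemma~\ref{mescar}). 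You have all these ingredients in hand but do not assemble them; as written, the $L^2$-case convergence is not proved.
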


\begin{proof}
First, few additional notations. Set $b^{a,1}_k = \sum_{Q \in \mathcal{D}_k} \chi_Q b^1_{Q^a}$ and
\begin{displaymath}
E^{a,1}_kf = \frac{E_k f}{E_k b^{a,1}_k} b^{a,1}_k,
\end{displaymath}
where naturally $E_k h = \sum_{Q \in \mathcal{D}_k} \chi_Q \langle h \rangle_Q$. It is immediate to see that the right hand side of \eqref{decomp}, for a fixed $k$, is precisely $E^{a,1}_k f$.

It follows from the stopping time construction that almost every $x\in Q_0$ belongs to only finitely many stopping cubes $P\in\bigcup_{t=0}^{\infty}\mathcal{D}^t$. If $S$ is the smallest of them, then $Q^a=S$ for all $Q\owns x$ with $\ell(Q)=2^{-k}\leq\ell(S)$. Thus $b_k^{a,1}(x)=b_S^1(x)$ and $E_k b_k^{a,1}(x)=\ave{b_S^1}_Q= E_k b_S^1(x)\to b_S^1(x)$ as $k\to\infty$ (this happens almost everywhere as the set
$\bigcup_{t=0}^{\infty}\mathcal{D}^t$ is countable). Since also $E_k f(x)\to f(x)$ almost everywhere, we have verified the pointwise convergence $E_k^{a,1}f\to f$. In the case of accretive $L^{\infty}$ systems, the $L^2(\mu)$ convergence is immediate from dominated convergence, since $|E^{a,1}_kf|\lesssim Mf$, where $M$ is the dyadic maximal operator.

It remains to prove that $E^{a,1}_k f \to f$ in $L^2(\nu)$ in the case of accretive $L^2$ systems.
Note that $\|E^{a,1}_k f\|_2 \lesssim \|f\|_2$. Thus, it suffices to prove the convergence for a given bounded function $f$. As the convergence is in any case fine in the pointwise almost everywhere sense,
we just need to find a suitable square integrable majorant. And we have
\begin{displaymath}
|E_k^{a,1} f(x)| \lesssim_f \sup_{Q = Q^a} |b^1_Q(x)| \le \Big( \sum_{Q = Q^a} |b^1_Q(x)|^2 \Big)^{1/2},
\end{displaymath}
and this is in $L^2$ since
\begin{displaymath}
\int \sum_{Q = Q^a} |b^1_Q|^2 \,d\nu = \sum_{Q=Q^a} \int _Q |b^1_Q|^2\,d\nu \lesssim \sum_{Q=Q^a} \nu(Q) \lesssim \nu(Q_0)
\end{displaymath}
by Lemma \ref{mescar}.
\end{proof}

We are usually given two dyadic systems $\mathcal{D}$ and $\mathcal{D}'$. Then we use operators $\Delta_Q$ constructed using $(b^1_Q)$ in connection with the family $\mathcal{D}$
and operators $\Delta_R$ constructed using $(b^2_R)$ in connection with the family $\mathcal{D}'$ (in the $L^2$ case the stopping time for the latter also uses $T^*$ instead of $T$, of course).
It would perhaps be better to write $\Delta_Q^1$ and $\Delta_R^2$ to indicate the difference (as we have done above for some operators that we need not use so frequently),
but we omit this for brevity. It should nevertheless be clear from the various summing conditions like $Q \in \mathcal{D}$ and $R \in \mathcal{D}'$.

\subsection{Square function estimates}
With accretive $L^{\infty}$ systems, the estimates
\begin{displaymath}
\sum_{Q \in \mathcal{D}} \|\Delta_Q f\|_2^2 \lesssim \|f\|_2^2 \qquad \textrm{and} \qquad
\sum_{Q \in \mathcal{D}} \|(\Delta_Q)^*f\|_2^2 \lesssim \|f\|_2^2
\end{displaymath}
are quite clear (see \cite[chapter 3]{NTVa}).
So in the rest of this subsection, we work with a doubling measure $\nu$ and $L^2$ type test functions (the second estimate is actually, perhaps surprisingly, generally false in this setting).

\begin{lem}\label{gcar}
The sequence
\begin{displaymath}
\beta_Q = |\langle b^1_{Q^a} \rangle_Q - \langle b^1_{Q^a} \rangle_{Q^{(1)}}|^2\nu(Q), \qquad Q \in \mathcal{D},
\end{displaymath}
is Carleson.
\end{lem}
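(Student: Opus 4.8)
The plan is to split the quantity $\langle b^1_{Q^a}\rangle_Q - \langle b^1_{Q^a}\rangle_{Q^{(1)}}$ according to whether the stopping parent $Q^a$ changes as we pass from $Q$ to $Q^{(1)}$, that is, whether $(Q^{(1)})^a = Q^a$ or not. In the first, generic case the two averages are taken of the \emph{same} function $b^1_{Q^a}$, so $\beta_Q$ is controlled by the martingale oscillation of that fixed function; in the second, exceptional case $Q$ must itself be one of the stopping cubes (or $Q^{(1)}$ contains a stopping cube that caps $Q$), and these terms will be summed using Lemma~\ref{mescar}.

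More precisely, first I would handle the main term. Fix a stopping cube $S \in \bigcup_t \mathcal{D}^t$ and consider all $Q \in \mathcal{D}$ with $Q^a = (Q^{(1)})^a = S$. For these, $\beta_Q = |\langle b^1_S\rangle_Q - \langle b^1_S\rangle_{Q^{(1)}}|^2 \nu(Q)$, and by the unweighted square function estimate (Theorem~\ref{usfe}) applied to $b^1_S \in L^2(\nu)$ (which is legitimate since $\int_S |b^1_S|^2\,d\nu \le C\nu(S)$ by assumption (ii) of the accretive $L^2$ system), we get
\begin{displaymath}
\sum_{Q:\, Q^a = (Q^{(1)})^a = S} |\langle b^1_S\rangle_Q - \langle b^1_S\rangle_{Q^{(1)}}|^2 \nu(Q) \lesssim \|b^1_S\|_{L^2(\nu)}^2 \lesssim \nu(S).
\end{displaymath}
Summing over all stopping cubes $S$ contained in a fixed $Q_0$ and invoking Lemma~\ref{mescar} (the sequence $S \mapsto \nu(S)$, $S \in \bigcup_t\mathcal{D}^t$, is Carleson), we obtain the Carleson bound $\sum_{Q \subset Q_0} \beta_Q^{\mathrm{main}} \lesssim \nu(Q_0)$ for the main part. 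Here one must be slightly careful that the cubes $Q$ with $Q^a = S$ do not quite form a full dyadic subtree, but they form the subtree rooted at $S$ with the descendant stopping cubes removed, which only makes the sum smaller, so Theorem~\ref{usfe} still applies after restricting.

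For the exceptional terms, note that $(Q^{(1)})^a \ne Q^a$ forces either $Q^{(1)} \in \bigcup_t\mathcal{D}^t$ with $Q^a = Q^{(1)}$, or more generally $Q$ to lie "just below" a stopping cube; in all cases such $Q$ are in bijection (boundedly many-to-one) with cubes $P \in \bigcup_t\mathcal{D}^t$ via $Q \mapsto Q^{(1)}$ or $Q\mapsto Q^a$. For each such term I would bound $\beta_Q$ crudely: $|\langle b^1_{Q^a}\rangle_Q|^2 \le \langle |b^1_{Q^a}|^2\rangle_Q$ and similarly for the $Q^{(1)}$-average, so $\beta_Q \lesssim \nu(Q) \langle |b^1_{Q^a}|^2 \rangle_Q + \nu(Q)\langle|b^1_{(Q^{(1)})^a}|^2\rangle_{Q^{(1)}} \lesssim \int_Q |b^1_{Q^a}|^2\,d\nu + (\text{sibling terms})$, using doubling to compare $\nu(Q)$ and $\nu(Q^{(1)})$. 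Summing the resulting quantities over the exceptional $Q$'s inside $Q_0$ reduces, after the bounded-overlap reindexing, to $\sum_{P \in \bigcup_t\mathcal{D}^t,\, P\subset Q_0} \int_P |b^1_P|^2\,d\nu \lesssim \sum_P \nu(P) \lesssim \nu(Q_0)$, again by assumption (ii) and Lemma~\ref{mescar}. Combining the two parts gives the Carleson estimate for $(\beta_Q)$.

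The main obstacle I expect is the bookkeeping in the exceptional case: correctly identifying which cubes $Q$ have $(Q^{(1)})^a \ne Q^a$, checking that the map from these $Q$ to stopping cubes is boundedly many-to-one, and making sure the doubling property of $\nu$ is used only where the function being averaged is genuinely the same across a controlled number of generations. The analytic content (Theorem~\ref{usfe} plus Lemma~\ref{mescar}) is light; the care is entirely combinatorial.
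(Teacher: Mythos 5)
Your overall strategy --- grouping the cubes according to the value of their stopping parent, applying the unweighted square function estimate (Theorem \ref{usfe}) to each fixed function $b^1_S$, and summing the resulting bounds $\nu(S)$ via Lemma \ref{mescar} and assumption (ii) --- is the same as the paper's. Two small remarks on the framing: by definition \emph{both} averages in $\beta_Q$ are of the single function $b^1_{Q^a}$ (not $b^1_{(Q^{(1)})^a}$ in the second slot), so no case distinction between $(Q^{(1)})^a=Q^a$ and $(Q^{(1)})^a\neq Q^a$ is forced on you; and the ``exceptional'' cubes are exactly the stopping cubes themselves, which your crude bound handles correctly, if redundantly.

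There is, however, a genuine gap. The Carleson condition must be verified for an \emph{arbitrary} dyadic test cube $Q_0$, not only for stopping cubes. When $Q_0$ is not a stopping cube, the cubes $Q\subset Q_0$ with $Q^a=Q_0^a\supsetneq Q_0$ form a group whose stopping parent is not contained in $Q_0$; your summation ``over all stopping cubes $S$ contained in $Q_0$'' omits this group, and the only bound your argument supplies for it, $\|b^1_{Q_0^a}\|_2^2\lesssim\nu(Q_0^a)$, can be arbitrarily much larger than $\nu(Q_0)$. To close this, one must (i) note that for $S\subset Q_0$ the averages $\langle b^1_{Q_0^a}\rangle_S$ and $\langle b^1_{Q_0^a}\rangle_{S^{(1)}}$ only see the values of $b^1_{Q_0^a}$ on $Q_0^{(1)}$, so Theorem \ref{usfe} in fact yields the localized bound $\|\chi_{Q_0^{(1)}}b^1_{Q_0^a}\|_2^2$; and (ii) invoke the maximal-function clause of the stopping time: since $Q_0^{(1)}$ is not contained in any stopping child of $Q_0^a$, one has $\int_{Q_0^{(1)}}|M_\nu b^1_{Q_0^a}|^2\,d\nu\le\delta^{-1}\nu(Q_0^{(1)})\lesssim\nu(Q_0)$ by doubling, whence $\|\chi_{Q_0^{(1)}}b^1_{Q_0^a}\|_2^2\lesssim\nu(Q_0)$. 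This is precisely the step $\lesssim\|1_{Q^{(1)}}b^1_{Q^a}\|_2^2\lesssim\nu(Q)$ in the paper's proof. Your write-up never uses the $M_\nu$ stopping condition, and without it the top-layer group cannot be controlled by $\nu(Q_0)$; assumption (ii) alone only controls $b^1_P$ on all of $P$, not on small non-stopped descendants of $P$.
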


\begin{proof}
Let $Q \in \mathcal{D}$ be such that $Q^a \in \mathcal{D}^t$.
We simply write as follows
\begin{align*}
\sum_{S \subset Q} \beta_S &= \sum_{S \subset Q}  |\langle b^1_{S^a} \rangle_S - \langle b^1_{S^a} \rangle_{S^{(1)}}|^2\nu(S) \\
&= \mathop{\sum_{S \subset Q}}_{S^a = Q^a} |\langle b^1_{Q^a} \rangle_S - \langle b^1_{Q^a} \rangle_{S^{(1)}}|^2\nu(S) \\
 &\qquad+\sum_{j=1}^{\infty} \sum_{H \in \mathcal{D}^{t+j}, \, H \subset Q} \mathop{\sum_{S \subset H}}_{S^a = H} |\langle b^1_{H} \rangle_S - \langle b^1_{H} \rangle_{S^{(1)}}|^2\nu(S)\\
 &\lesssim\|1_{Q^{(1)}}b^1_{Q^a}\|_2^2 
    +\sum_{j=1}^{\infty} \sum_{H \in \mathcal{D}^{t+j}, \, H \subset Q} \|b_H^1\|_2^2 \\
 &\lesssim\nu(Q)+\sum_{j=1}^{\infty} \sum_{H \in \mathcal{D}^{t+j}, \, H \subset Q} \nu(H)\lesssim\nu(Q)
\end{align*}
by the unweighted square function estimate (Theorem \ref{usfe}) and Lemma \ref{mescar}.
\end{proof}

\begin{prop}\label{sqest}
There holds
\begin{displaymath}
\sum_{Q \in \mathcal{D}} \|\Delta_Q f\|_2^2 \lesssim \|f\|_2^2.
\end{displaymath}
\end{prop}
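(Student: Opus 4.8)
The plan is to expand $\Delta_Q f=\sum_{Q'\in\text{ch}(Q)}[\cdots]\chi_{Q'}$ and to split the children $Q'$ of a cube $Q$ into two classes: the \emph{good} ones, for which $(Q')^a=Q^a$ (equivalently, $Q'$ does not itself lie in any $\mathcal D^j$), and the \emph{stopping} ones, for which $(Q')^a=Q'$. Throughout I will use the following consequences of the $L^2$ stopping construction and the doubling of $\nu$: if $Q^a=H$ then $|\ave{b^1_H}_Q|\ge\delta$ and $\int_Q|M_\nu b^1_H|^2\,d\nu\le\delta^{-1}\nu(Q)$; by the Lebesgue differentiation theorem $|b^1_H(x)|\le M_\nu b^1_H(x)$ for $\nu$-a.e.\ $x$; and $\nu(Q)\sim\nu(Q')$ whenever $Q'\in\text{ch}(Q)$. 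Combining these, and invoking condition (ii) of the $L^2$ system when $Q'$ is a stopping cube (so $b^1_{Q^a}=b^1_Q$ with $Q^a=Q$), one obtains the uniform bound $\int_{Q'}|b^1_{Q^a}|^2\,d\nu\lesssim\nu(Q')$ for every child $Q'$ of every $Q$; this is the one point that genuinely uses the $L^2$ machinery rather than the trivial $L^\infty$ bound.

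For a good child $Q'$ of $Q$, put $H=Q^a=(Q')^a$; then $\Delta_Q f|_{Q'}=\big(\ave{f}_{Q'}/\ave{b^1_H}_{Q'}-\ave{f}_Q/\ave{b^1_H}_Q\big)b^1_H$, and I split
\[
\frac{\ave{f}_{Q'}}{\ave{b^1_H}_{Q'}}-\frac{\ave{f}_Q}{\ave{b^1_H}_Q}
=\frac{\ave{f}_{Q'}-\ave{f}_Q}{\ave{b^1_H}_{Q'}}
+\ave{f}_Q\,\frac{\ave{b^1_H}_Q-\ave{b^1_H}_{Q'}}{\ave{b^1_H}_{Q'}\ave{b^1_H}_Q}.
\]
Using $|\ave{b^1_H}_{Q'}|,|\ave{b^1_H}_Q|\ge\delta$, the bound $\int_{Q'}|b^1_H|^2\,d\nu\lesssim\nu(Q')$, and the quantity $\beta_{Q'}$ of Lemma \ref{gcar} (so that $|\ave{b^1_H}_Q-\ave{b^1_H}_{Q'}|^2\nu(Q')=\beta_{Q'}$ for such $Q'$), the total contribution of the good children to $\sum_Q\|\Delta_Q f\|_2^2$ is at most a constant multiple of
\[
\sum_{Q'}\big|\ave{f}_{Q'}-\ave{f}_{(Q')^{(1)}}\big|^2\nu(Q')
\;+\;\sum_{Q'\ \text{good}}\big|\ave{f}_{(Q')^{(1)}}\big|^2\beta_{Q'}.
\]
The first sum is $\lesssim\|f\|_2^2$ by the unweighted square function estimate (Theorem \ref{usfe}). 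In the second I group by the common parent $Q$; since $\sum_{Q\subseteq R}\sum_{Q'\in\text{ch}(Q),\,Q'\ \text{good}}\beta_{Q'}\le\sum_{Q'\subseteq R}\beta_{Q'}\le C\nu(R)$ by Lemma \ref{gcar}, these coefficients form a Carleson sequence, so the Carleson embedding theorem shows the second sum is $\lesssim\|f\|_2^2$.

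For a stopping child $Q'$ of $Q$, using $\ave{b^1_{Q'}}_{Q'}=1$ we have $\Delta_Q f|_{Q'}=\ave{f}_{Q'}b^1_{Q'}-\big(\ave{f}_Q/\ave{b^1_{Q^a}}_Q\big)b^1_{Q^a}\chi_{Q'}$; by condition (ii), the accretivity bound $|\ave{b^1_{Q^a}}_Q|\ge\delta$, and the uniform estimate above, its square integral over $Q'$ is $\lesssim\big(|\ave{f}_{Q'}|^2+|\ave{f}_{(Q')^{(1)}}|^2\big)\nu(Q')$. Summing, the contribution of the stopping children is $\lesssim\sum_{Q'\ \text{stopping}}|\ave{f}_{Q'}|^2\nu(Q')+\sum_{Q'\ \text{stopping}}|\ave{f}_{(Q')^{(1)}}|^2\nu(Q')$. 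By Lemma \ref{mescar} the sequence $(\alpha_Q)$ equal to $\nu(Q)$ on stopping cubes and $0$ elsewhere is Carleson, so Carleson embedding bounds the first sum by $\lesssim\|f\|_2^2$; for the second, grouping by the parent $Q$ turns it into $\sum_Q|\ave{f}_Q|^2\,\nu\big(\bigcup\{Q'\in\text{ch}(Q):Q'\ \text{stopping}\}\big)$, whose coefficients are again Carleson by Lemma \ref{mescar} (the same telescoping estimate over $R$), and Carleson embedding finishes.

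I expect the only delicate step to be the uniform bound $\int_{Q'}|b^1_{Q^a}|^2\,d\nu\lesssim\nu(Q')$ recorded in the first paragraph: this is exactly where the maximal-function clause built into the $L^2$ stopping time, combined with Lebesgue differentiation and doubling of $\nu$, substitutes for the pointwise control $\|b^1_Q\|_\infty\le C$ available in the accretive $L^\infty$ case; the rest is bookkeeping plus Theorem \ref{usfe}, the Carleson embedding theorem, and Lemmas \ref{mescar} and \ref{gcar}. (The corresponding estimate for $(\Delta_Q)^*$ is not produced by this argument — and, as the text notes, is in general false — since $(\Delta_Q)^*$ lacks the factored form $[\cdots]\,b^1$ exploited above.)
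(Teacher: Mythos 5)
Your proof is correct and follows essentially the same route as the paper: split each $\Delta_Q f$ according to whether the child $Q'$ is a stopping cube or satisfies $(Q')^a=Q^a$, control $\int_{Q'}|b^1_{Q^a}|^2\,d\nu\lesssim\nu(Q')$ via the maximal-function clause of the stopping time together with Lebesgue differentiation and doubling, and then conclude with Theorem \ref{usfe}, Lemma \ref{mescar}, Lemma \ref{gcar} and the Carleson embedding theorem. The only (harmless) cosmetic difference is which average ($\ave{f}_{Q'}$ versus $\ave{f}_{(Q')^{(1)}}$) you attach to the Carleson coefficients, which you correctly handle by regrouping over the parent cube.
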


\begin{proof}
Note that
\begin{displaymath}
\sum_{Q \in \mathcal{D}} \|\Delta_Q f\|_2^2 = I + II,
\end{displaymath}
where
\begin{align*}
I &= \sum_{Q \in \mathcal{D}} \mathop{\sum_{Q' \in \, \textrm{ch}(Q)}}_{(Q')^a = Q'}
\int_{Q'} \Big| \frac{\langle f \rangle_{Q'}}{\langle b^1_{Q'}\rangle_{Q'}}b^1_{Q'} - \frac{\langle f \rangle_Q}{\langle b^1_{Q^a}\rangle_Q}b^1_{Q^a}\Big|^2\,d\nu, \\
II &= \sum_{Q \in \mathcal{D}} \mathop{\sum_{Q' \in \, \textrm{ch}(Q)}}_{(Q')^a = Q^a}
\int_{Q'} \Big| \frac{\langle f \rangle_{Q'}}{\langle b^1_{Q^a}\rangle_{Q'}} - \frac{\langle f \rangle_Q}{\langle b^1_{Q^a}\rangle_Q}\Big|^2 |b^1_{Q^a}|^2\,d\nu.
\end{align*}
Furthermore, there holds (as $\nu$ is doubling) that
\begin{displaymath}
I \lesssim \sum_{Q \in \mathcal{D}} \mathop{\sum_{Q' \in \, \textrm{ch}(Q)}}_{(Q')^a = Q'} \Big[ |\langle f \rangle_{Q'}|^2 \nu(Q') + |\langle f \rangle_{Q'} - \langle f \rangle_Q|^2 \nu(Q')\Big] \lesssim \|f\|_2^2.
\end{displaymath}
Here we used Lemma \ref{mescar} to bound the first term by $\|f\|_2^2$ (the bound for the second term follows from the unweighted square function estimate, Theorem \ref{usfe}).

Next, note that
\begin{displaymath}
II \lesssim \sum_{Q \in \mathcal{D}} \mathop{\sum_{Q' \in \, \textrm{ch}(Q)}}_{(Q')^a = Q^a} \Big[ |\langle f \rangle_{Q'}|^2 |\langle b^1_{Q^a} \rangle_{Q'} - \langle b^1_{Q^a} \rangle_Q|^2 \nu(Q') +
|\langle f \rangle_{Q'} - \langle f \rangle_Q|^2 \nu(Q')\Big].
\end{displaymath}
The latter term is yet again bounded by $\|f\|_2^2$ by the unweighted square function estimate (Theorem \ref{usfe}), and the first one is, too, bounded by $\|f\|_2^2$ by the previous lemma.
\end{proof}
The following example is a bit disconcerting. After all, we want to work with accretive $L^2$ systems of functions, and the failure of such a fundamental estimate seems like a real predicament. A weaker, but sufficient for us, substitute result is offered afterwards.

\begin{exmp}\label{counterex}
The estimate
\begin{displaymath}
\sum_{Q \in \mathcal{D}} \|(\Delta_Q)^*f\|_2^2 \lesssim \|f\|_2^2
\end{displaymath}
is not, in general, true for accretive $L^2$ systems.
\end{exmp}

\begin{proof}
Consider the one-dimensional situation with $Q_0=[0,1)$ and $N\in\Z_+$ a fixed but arbitrary parameter. We construct a sequence of examples, where the constant in the dual square function estimate grows without limit as a function of $N$. Let
\begin{equation*}
  b_{[0,2^{-j})}:=b_j:=2^{(N-j)/2}\chi_{[0,2^{-N})}+\chi_{[2^{-N},2^{-j})},\quad j=0,1,\ldots,N,
\end{equation*}
and $b_Q:=\chi_Q$ for all other dyadic intervals $Q$. They satisfy $|Q|^{-1}\int_Q|b_Q|^2 dx\leq 2$, and the accretivity of these functions is not an issue; however, the normalized $L^2$ norm of $b_{[0,2^{-j})}$ on $[0,2^{-k})$ will increase as $k$ increases. With a suitable choice of the stopping parameters, it follows that the stopping cubes are precisely all the $Q_j:=[0,2^{-j})$, $j=0,1,\ldots,N$. In particular,
\begin{equation*}
   \chi_{Q_j}(\Delta_{Q_{j-1}})^*f
   =\chi_{Q_j}\Big(\frac{ \ave{b_j f}_{Q_j} }{ \ave{b_j}_{Q_j}  } - \frac{ \ave{b_{j-1} f}_{Q_{j-1}} }{ \ave{b_{j-1}}_{Q_{j-1}}  }\Big),\quad j=1,\ldots,N.
\end{equation*}
We apply this to the function $f=2^{N/2}\chi_{Q_N}$ for which
\begin{equation*}
   \frac{ \ave{b_j f}_{Q_j} }{ \ave{b_j}_{Q_j}  }=\frac{2^{j/2}}{1+2^{(j-N)/2}-2^{j-N}},
\end{equation*}
yielding, by a simple computation,
\begin{equation*}
   \chi_{Q_j}(\Delta_{Q_{j-1}})^*f \geq c 2^{j/2} \chi_{Q_j}.
\end{equation*}
Since $\|\chi_{Q_j}\|_2^2=2^{-j}$ and $\|f\|_2=1$, it follows that
\begin{equation*}
  \sum_{Q\in\mathcal{D}}\|(\Delta_Q)^*f\|_2^2
  \geq\sum_{j=1}^N\|\chi_{Q_j}(\Delta_{Q_{j-1}})^*f\|_2^2
  \geq \sum_{j=1}^N c = cN = cN\|f\|_2^2,
\end{equation*}
and this proves the impossibility of the dual square function estimate.
\end{proof}

The following weaker estimate is, however, true and still useful.

\begin{prop}\label{subs}
For general accretive $L^2$ systems, there holds
\begin{displaymath}
\mathop{\sum_{Q \in \mathcal{D}}}_{Q^a = P} \|(\Delta_Q)^*f\|_2^2 \lesssim \|\chi_Pf\|_2^2.
\end{displaymath}
\end{prop}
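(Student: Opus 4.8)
The point is that we only sum over those $Q$ with a \emph{fixed} stopping parent $P=Q^a$, so within this sum the accretive function appearing in $\Delta_Q$ is always the same one, namely $b^1_P$. Writing $\beta_P=\langle b^1_P\rangle_P$ for the (nonzero, by the stopping condition) average, we have for $Q$ with $Q^a=(Q')^a=P$ that
\begin{displaymath}
(\Delta_Q)^*f=\sum_{Q'\in\mathrm{ch}(Q)}\Big[\frac{\langle b^1_P f\rangle_{Q'}}{\langle b^1_P\rangle_{Q'}}-\frac{\langle b^1_P f\rangle_{Q}}{\langle b^1_P\rangle_{Q}}\Big]\chi_{Q'},
\end{displaymath}
with the caveat that a child $Q'$ may have $(Q')^a\subsetneq Q^a$, i.e.\ $Q'$ is a new stopping cube; such children $Q'$ do not contribute to the sum we are bounding (they contribute to a different value of the fixed stopping parent), but they \emph{do} appear in $(\Delta_Q)^*f$. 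So first I would split $(\Delta_Q)^*f$ according to whether $(Q')^a=P$ or $(Q')^a\ne P$; the second kind of child, when restricted to $\chi_P$... actually those children are where the martingale increments ``leak out'' and must be controlled by the Carleson property of stopping cubes.

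**Main reduction.** Let $g=\chi_P b^1_P f$ and $h=\chi_P b^1_P$; note $h$ is bounded below in average on every $Q\subset P$ with $Q^a=P$... no — that is exactly what can fail (Example~\ref{counterex}). The correct move is: for $Q$ with $Q^a=P$, and for a child $Q'$ with $(Q')^a=P$ as well, the bracket is
\begin{displaymath}
\frac{\langle g\rangle_{Q'}}{\langle h\rangle_{Q'}}-\frac{\langle g\rangle_{Q}}{\langle h\rangle_{Q}}
=\frac{1}{\langle h\rangle_{Q'}}\big(\langle g\rangle_{Q'}-\langle g\rangle_Q\big)
 -\frac{\langle g\rangle_Q}{\langle h\rangle_Q\langle h\rangle_{Q'}}\big(\langle h\rangle_Q-\langle h\rangle_{Q'}\big).
\end{displaymath}
Now I use that for \emph{all} cubes $Q$ with $Q^a=P$ (and their children of the same type) we have $|\langle h\rangle_Q|=|\langle b^1_P\rangle_Q|\gtrsim 1$: indeed such $Q$ is \emph{not} a stopping cube for $b^1_P$, so by the $L^2$-stopping-time condition (the third alternative, $|\int_Q b^1_{Q_0}\,d\nu|<\delta\nu(Q)$, applied with parent $P$) we have $|\langle b^1_P\rangle_Q|\ge\delta$; likewise $|\langle h\rangle_{Q'}|\gtrsim1$ when $(Q')^a=P$. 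Hence the first term is $\lesssim|\langle g\rangle_{Q'}-\langle g\rangle_Q|$ and the second is $\lesssim|\langle g\rangle_Q|\,|\langle h\rangle_Q-\langle h\rangle_{Q'}|$. Summing the squares over all such $Q,Q'$: the first family is controlled by the unweighted square function estimate (Theorem~\ref{usfe}) applied to $g$, giving $\lesssim\|g\|_2^2$; the second family is controlled by the Carleson embedding theorem, using that $\big(|\langle h\rangle_Q-\langle h\rangle_{Q^{(1)}}|^2\nu(Q)\big)_Q$ is Carleson by Lemma~\ref{gcar} (this is exactly $\beta_Q$, since $Q^a=P$), against the sequence $\langle g\rangle_Q$, giving again $\lesssim\|g\|_2^2$. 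Finally, for children $Q'$ with $(Q')^a\ne P$: these $Q'$ are stopping cubes, the corresponding term in $(\Delta_Q)^*f$ restricted to $Q'$ has the form $[\langle b^1_{(Q')^a}f\rangle_{Q'}/\langle b^1_{(Q')^a}\rangle_{Q'}]-[\langle g\rangle_Q/\langle h\rangle_Q]$, each piece of modulus $\lesssim M_\nu f$ in absolute average (using the $L^2$ bounds on $M_\nu b$ from the stopping time for the first, and $|\langle h\rangle_Q|\gtrsim1$ for the second), and since $\sum_{Q'\in\bigcup\mathcal D^j}\nu(Q')\lesssim\nu(P)$ by Lemma~\ref{mescar}, this leakage term is $\lesssim\int_P(M_\nu f)^2\,d\nu\lesssim\|\chi_P f\|_2^2$ after enlarging slightly, or more carefully $\lesssim\|\chi_{P}f\|_2^2$ by localizing the maximal function. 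Combining, $\sum_{Q^a=P}\|(\Delta_Q)^*f\|_2^2\lesssim\|g\|_2^2+\|\chi_P f\|_2^2\lesssim\|\chi_P b^1_P f\|_2^2+\|\chi_P f\|_2^2\lesssim\|\chi_P f\|_2^2$, the last step using $\|b^1_P\|$... wait, $b^1_P$ is only $L^2$, not $L^\infty$, so $\|\chi_P b^1_P f\|_2$ need not be $\lesssim\|\chi_P f\|_2$ in general.

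**The main obstacle.** The genuine difficulty, and where I expect to spend the most care, is precisely that last point: $g=\chi_P b^1_P f$ is only in $L^1$, not $L^2$, when $f$ is merely $L^\infty$ or $L^2$, because $b^1_P\in L^2(\nu)$ only. So I cannot simply feed $g$ into Theorem~\ref{usfe}. The fix is to run the square function estimate and Carleson embedding not against $g$ but against $f$ itself, keeping $b^1_P$ attached to the \emph{function being tested}: one should write the bracket as an average of $f$ against the measure $b^1_P\,d\nu$, i.e.\ introduce $\langle f\rangle^{b}_Q:=\langle b^1_P f\rangle_Q/\langle b^1_P\rangle_Q$, and prove a ``$b$-adapted'' square function estimate $\sum_{Q^a=P}|\langle f\rangle^b_Q-\langle f\rangle^b_{Q^{(1)}}|^2\nu(Q)\lesssim\|\chi_P f\|_2^2$. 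This in turn reduces — via the telescoping identity for the $b$-martingale and the lower bound $|\langle b^1_P\rangle_Q|\gtrsim1$ — to controlling $\int_P|M_\nu(\chi_P b^1_P f)|^2$-type quantities; here one uses the $L^2$-stopping condition $\int_Q|M_\nu b^1_{Q_0}|^2\,d\nu\le\delta^{-1}\nu(Q)$ (now with parent $P$: $\int_P|M_\nu b^1_P|^2\,d\nu\le\delta^{-1}\nu(P)$, which does NOT follow from $\delta$ being small but IS one of the stopping alternatives, so $P$ being a non-stopping-generated... rather, it holds \emph{on} stopping cubes by maximality of their parents' children — precisely, for $Q$ with $Q^a=P$, $Q$ not stopping means $\int_Q|M_\nu b^1_P|^2\,d\nu\le\delta^{-1}\nu(Q)$) together with the Fefferman--Stein / Carleson machinery to absorb the $b$-weight. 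Carrying this ``$L^2$-stopping-condition absorbs the bad weight'' argument cleanly — rather than the naive $g\in L^2$ claim — is the heart of the proof and the one nontrivial step; everything else is the bookkeeping of splitting children into ``same stopping parent'' and ``new stopping cube'' and invoking Lemma~\ref{mescar}, Lemma~\ref{gcar}, Theorem~\ref{usfe} and the Carleson embedding theorem as above.
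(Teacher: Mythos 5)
You have the right skeleton (split the children of $Q$ according to whether $(Q')^a=P$ or $Q'$ is a new stopping cube, use the lower bound $|\langle b^1_P\rangle_Q|\ge\delta$ for all $Q$ with $Q^a=P$, and aim at Theorem~\ref{usfe} and Lemma~\ref{gcar}), and you correctly isolate the crux: $g=\chi_Pb^1_Pf$ is only in $L^1(\nu)$, so the unweighted square function estimate cannot be applied to it. But you do not resolve this; your proposed ``$b$-adapted square function estimate,'' to be proved by controlling $\int_P|M_\nu(\chi_Pb^1_Pf)|^2\,d\nu$ via ``Fefferman--Stein / Carleson machinery,'' does not work as stated: the stopping time controls $M_\nu b^1_P$, not $M_\nu(b^1_Pf)$, and the maximal function of an $L^1$ function is not in $L^2$, so you are back at the same obstruction. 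The paper's resolution is a concrete replacement trick: with $t$ such that $P\in\mathcal{D}^t$, every $J$ with $J^a=P$ is the disjoint union of $J\cap(P\setminus\bigcup\mathcal{D}^{t+1})$ and the full stopping cubes $S\in\mathcal{D}^{t+1}$ contained in $J$, so $\langle b^1_Pf\rangle_J=\langle\tilde g\rangle_J$ where $\tilde g=\chi_{P\setminus\cup\mathcal{D}^{t+1}}b^1_Pf+\sum_{S\in\mathcal{D}^{t+1},\,S\subset P}\chi_S\langle b^1_Pf\rangle_S$. This $\tilde g$ \emph{is} in $L^2$ with $\|\tilde g\|_2\lesssim\|\chi_Pf\|_2$: off the next-generation stopping cubes $b^1_P$ is bounded a.e.\ (Lebesgue differentiation plus the $M_\nu b^1_P$ stopping condition), and $|\langle b^1_Pf\rangle_S|^2\nu(S)\lesssim\int_S|f|^2\,d\nu$ by Cauchy--Schwarz, doubling, and the fact that $S^{(1)}$ is not a stopping cube. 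One then applies Theorem~\ref{usfe} and Lemma~\ref{gcar} to $\tilde g$. Without this (or an equivalent) step your argument does not close.

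A secondary error: for the ``leakage'' children $Q'$ with $(Q')^a=Q'$, your bound ``each piece of modulus $\lesssim M_\nu f$'' is false, and routing the sum through Lemma~\ref{mescar} over all generations would in any case fail (that is exactly why the global dual square function estimate breaks in Example~\ref{counterex}). What saves this term is that these $Q'$ all lie in the \emph{single} generation $\mathcal{D}^{t+1}$ and are therefore pairwise disjoint; combined with $|\langle b^1_{Q'}f\rangle_{Q'}|^2\nu(Q')\le\langle|b^1_{Q'}|^2\rangle_{Q'}\int_{Q'}|f|^2\,d\nu\lesssim\int_{Q'}|f|^2\,d\nu$ (and the analogous bound for the $b^1_P$ piece), the sum telescopes to $\|\chi_Pf\|_2^2$ by disjointness alone. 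This one-generation structure is precisely what makes the fixed-$P$ statement true while the full estimate fails.
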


\begin{proof}
We write
\begin{equation*}
  \mathop{\sum_{Q \in \mathcal{D}}}_{Q^a = P} \|(\Delta_Q)^*f\|_2^2
  =\sum_{\substack{Q\in\mathcal{D}\\ Q^a=P}} \sum_{Q'\in\textrm{ch}(Q)}\Big|
      \frac{\ave{b^1_{(Q')^a}f}_{Q'}}{\ave{b^1_{(Q')^a}}_{Q'}}-\frac{\ave{b^1_{Q^a}f}_{Q}}{\ave{b^1_{Q^a}}_{Q}}\Big|^2\nu(Q')\lesssim I+II,
\end{equation*}
where
\begin{displaymath}
I = \mathop{\sum_{Q \in \mathcal{D}}}_{Q^a = P} \mathop{\sum_{Q' \in \, \textrm{ch}(Q)}}_{(Q')^a = Q'} [|\langle b^1_{Q'}f\rangle_{Q'}|^2 + |\langle b^1_{Q^a} f\rangle_{Q'}|^2
+ |\langle b^1_{Q^a}f\rangle_{Q'} - \langle b^1_{Q^a}f\rangle_Q|^2]\nu(Q')
\end{displaymath}
and
\begin{displaymath}
II = \mathop{\sum_{Q \in \mathcal{D}}}_{Q^a = P} \mathop{\sum_{Q' \in \, \textrm{ch}(Q)}}_{(Q')^a = Q^a} [|\langle b^1_{Q^a}f\rangle_{Q'}|^2|\langle b^1_{Q^a}\rangle_{Q'} - \langle b^1_{Q^a}\rangle_Q|^2
+ |\langle b^1_{Q^a}f\rangle_{Q'} - \langle b^1_{Q^a}f\rangle_Q|^2]\nu(Q').
\end{displaymath}

Let $t$ be such that $P \in \mathcal{D}^t$. Note that
\begin{displaymath}
|\langle b^1_{Q'}f\rangle_{Q'}|^2\nu(Q') \le \frac{1}{\nu(Q')} \Big( \int_{Q'} |b^1_{Q'}|^2\,d\nu\Big) \Big(\int_{Q'} |f|^2\,d\nu \Big) \lesssim \int_{Q'} |f|^2\,d\nu
\end{displaymath}
and (as $\nu$ is doubling) that
\begin{displaymath}
|\langle b^1_Pf\rangle_{Q'}|^2\nu(Q') \le \frac{1}{\nu(Q')} \Big( \int_{Q} |b^1_P|^2\,d\nu\Big) \Big(\int_{Q'} |f|^2\,d\nu \Big) \lesssim \int_{Q'} |f|^2\,d\nu
\end{displaymath}
showing that
\begin{displaymath}
\mathop{\sum_{Q \in \mathcal{D}}}_{Q^a = P} \mathop{\sum_{Q' \in \, \textrm{ch}(Q)}}_{(Q')^a = Q'} [|\langle b^1_{Q'}f\rangle_{Q'}|^2 + |\langle b^1_Pf\rangle_{Q'}|^2]\nu(Q') \lesssim \mathop{\sum_{Q' \in \mathcal{D}^{t+1}}}_{Q' \subset P} \int_{Q'} |f|^2\,d\nu \le \|\chi_Pf\|_2^2.
\end{displaymath}
Here we used the fact that $Q' \in \mathcal{D}^{t+1}$ are disjoint.

For the rest of the terms, we write
\begin{displaymath}
\langle b^1_P f \rangle_J = \Big\langle \chi_{P \setminus \cup \mathcal{D}^{t+1}} b^1_P f \Big\rangle_J + \Big\langle \mathop{\sum_{S \in \mathcal{D}^{t+1}}}_{S \subset P} \chi_S \langle b^1_P f\rangle_S \Big\rangle_J
\end{displaymath}
for $J = Q$ or $J = Q'$, where $Q^a=P$. Recalling Lemma \ref{gcar} and the unweighted square function estimate, Theorem \ref{usfe},
we have that
\begin{align*}
\mathop{\sum_{Q \in \mathcal{D}}}_{Q^a = P} & \mathop{\sum_{Q' \in \, \textrm{ch}(Q)}}_{(Q')^a = Q^a}  |\langle b^1_Pf\rangle_{Q'}|^2|\langle b^1_{Q^a}\rangle_{Q'} - \langle b^1_{Q^a}\rangle_Q|^2\nu(Q') \\
&+ \mathop{\sum_{Q \in \mathcal{D}}}_{Q^a = P} \sum_{Q' \in \, \textrm{ch}(Q)} |\langle b^1_Pf\rangle_{Q'} - \langle b^1_Pf\rangle_Q|^2\nu(Q')
\end{align*}
is dominated by
\begin{displaymath}
\|\chi_{P \setminus \cup \mathcal{D}^{t+1}} b^1_P f\|_2^2 + \Big\| \mathop{\sum_{S \in \mathcal{D}^{t+1}}}_{S \subset P} \chi_S \langle b^1_P f\rangle_S \Big\|_2^2 \lesssim \|\chi_Pf\|_2^2,
\end{displaymath}
where the last estimate follows since on $P \setminus \bigcup \mathcal{D}^{t+1}$ we have $L^{\infty}$-control of $b^1_P$ by Lebesgue's differentiation theorem, and
\begin{align*}
\Big\| \mathop{\sum_{S \in \mathcal{D}^{t+1}}}_{S \subset P} \chi_S \langle b^1_P f\rangle_S \Big\|_2^2 &= \mathop{\sum_{S \in \mathcal{D}^{t+1}}}_{S \subset P} |\langle b^1_P f\rangle_S|^2 \nu(S) \\
&\le \mathop{\sum_{S \in \mathcal{D}^{t+1}}}_{S \subset P} \frac{1}{\nu(S)} \Big( \int_{S^{(1)}} |b^1_P|^2\,d\nu \Big) \Big(\int_S |f|^2\,d\nu \Big) \\
&\lesssim  \mathop{\sum_{S \in \mathcal{D}^{t+1}}}_{S \subset P} \int_S |f|^2\,d\nu \le \|\chi_Pf\|_2^2.
\end{align*}
\end{proof}

\begin{rem}
The stronger estimate
\begin{displaymath}
\sum_{Q \in \mathcal{D}} \|(\Delta_Q)^*f\|_2^2 \lesssim \|f\|_2^2
\end{displaymath}
is true if our test functions satisfy $\int_Q |b^1_Q|^q\,d\nu \lesssim \nu(Q)$ for some $q > 2$ (and the stopping time argument is modified to use this condition, of course).
The point is that then one can cope with summing over the multiple generations of $\mathcal{D}^j$ because of the better estimate
$|\langle b^1_{Q'}f\rangle_{Q'}|^2 + |\langle b^1_{Q^a} f\rangle_{Q'}|^2 \lesssim |\langle |f|^p \rangle_{Q'}|^{2/p}$ for $p =q'< 2$ (the Hardy--Littlewood maximal function
is then bounded on $L^{2/p}$).
\end{rem}

\section{Random dyadic cubes and the decomposition of the pairing $\langle Tf, g\rangle$}\label{sec:randomcubes}
Start by fixing once and for all two compactly supported functions $f$ and $g$ so that $\|f\|_2 = \|g\|_2 = 1$ and $\|T\|/2 \le |\langle Tf, g\rangle|$. We choose a big enough integer $N$ so that
spt$\,f$, spt$\,g \subset B(0, 2^{N-3})$. Consider two independent random squares $Q_0 = Q_0(w) = w + [-2^N,2^N)^n$ and $R_0 = R_0(w') = w' + [-2^N, 2^N)^n$, where
$w, w' \in [-2^{N-1}, 2^{N-1})^n$. The cubes $Q_0$ and $R_0$ are taken to be the starting cubes of the independent grids $\mathcal{D}$ and $\mathcal{D}'$ (only the cubes inside $Q_0$ and $R_0$ matter).
Of course, the probability measure in question is the normalized Lebesgue measure on the square $[-2^{N-1}, 2^{N-1})^n$. Furthermore, note that always spt$\,f$, spt$\,g \subset \alpha Q_0 \cap \alpha R_0$ with some absolute constant $\alpha < 1$.

A cube $Q \in \mathcal{D}$ is bad (or $\mathcal{D}'$-bad), if there exists a cube $R$ in the dyadic system $\mathcal{D}'$ such that $\ell(Q) \le 2^{-r}\ell(R)$ and $d(Q, \textrm{sk}\, R) \le 2n^{1/2}\ell(Q)^{\gamma}\ell(R)^{1-\gamma}$.
Here the skeleton of $R$ is the set sk$\,R = \bigcup \partial R_i$, where $R_i$ are the children of $R$. Also, recall that
$\gamma = \alpha/(2\alpha+2d)$, where $\alpha$ is the number from the kernel estimates and $d = \log_2 C_{\lambda}$. The number $r$ is fixed to be large enough (this is quantified later).

We shall use the badness morally in the same line as it is usually used \cite{NTVa,NTV} -- the details are somewhat different, however. There are various reasons for this, and we shall carefully elaborate on those after
performing the decomposition, since this seems to us like a genuine source of trouble.

We define $\sum_{Q \in \mathcal{D}}^k = \sum_{Q \in \mathcal{D}, \, \ell(Q) > 2^{-k}}$. Using the facts that $E^{a,1}_k f \to f$ in $L^2$ and $\|E^{a,1}_k f\|_2 \lesssim \|f\|_2$ combined with dominated convergence (in the probability space) we see that
\begin{displaymath}
\langle Tf, g\rangle = \lim_{k \to \infty} E \langle T(E^{a,1}_k f), E^{a,2}_k g\rangle,
\end{displaymath}
where $E$ is the expectation over the random grids $\mathcal{D}$ and $\mathcal{D}'$; sometimes we will explicitly write it as $E=E_{\mathcal{D}}E_{\mathcal{D}'}$.
Since
\begin{equation*}
  E_k^{a,1}f=E_{Q_0}f+\sum_{Q\in\mathcal{D}}^k\Delta_Q f,
\end{equation*}
the pairing on the right hand side can be written in the form
\begin{displaymath}
\sum_{R \in \mathcal{D}'}^k \sum_{Q \in \mathcal{D}}^k \langle T(\Delta_Qf), \Delta_Rg\rangle + \langle T(E_{Q_0}f), E^{a,2}_k g \rangle + \langle T(E^{a,1}_k f), E_{R_0}g\rangle - \langle T(E_{Q_0}f), E_{R_0}g\rangle.
\end{displaymath}

Note that spt$\,E^{a,2}_k g \subset Q_0$ for all sufficiently large $k$. Thus, one can bound $|\langle T(E_{Q_0}f), E^{a,2}_k g \rangle|$ by $\mu(Q_0)^{-1/2}\|\chi_{Q_0}Tb^1_{Q_0}\|_2 \lesssim 1$. For the same reason there holds
$| \langle T(E^{a,1}_k f), E_{R_0}g\rangle| \lesssim 1$ (for large $k$). There seems to be no such equally cheap way to further bound $|\langle T(E_{Q_0}f), E_{R_0}g\rangle| \le \mu(Q_0)^{-1/2}\mu(R_0)^{-1/2}|\langle Tb^1_{Q_0}, b^2_{R_0}\rangle|$.
However, this can be controlled using a much simplified version of the arguments we shall use in Section~\ref{sec:adjacent} concerning adjacent cubes of comparable size in the main series $\sum_{R \in \mathcal{D}'}^k \sum_{Q \in \mathcal{D}}^k \langle T(\Delta_Qf), \Delta_Rg\rangle$.
We detail on this at the end of that chapter.

Therefore, one is (remembering the above remark) reduced to estimating
\begin{displaymath}
\Big| E \sum_{R \in \mathcal{D}'}^k \sum_{Q \in \mathcal{D}}^k \langle T(\Delta_Qf), \Delta_Rg\rangle \Big|
\end{displaymath}
with a bound independent of $k$. The summation after the expectation is finite, and thus all the rearrangements one could want to make are legitimate. 
In the sequel, the index $k = k_0$ is fixed, and we no longer make any reference to it in the notation. (The symbol $k$ will then be free for other uses.)

We continue to write the summation
\begin{displaymath}
\sum_{R \in \mathcal{D}'} \mathop{\sum_{Q \in \mathcal{D}}}_{\ell(Q) \le \ell(R)}
\end{displaymath}
in the form
\begin{displaymath}
\sum_{R \in \mathcal{D}'}\Big( \mathop{\mathop{\sum_{Q \in \mathcal{D}}}_{\ell(Q) \le \ell(R)}}_{d(Q,R) > 2n^{1/2}\ell(Q)^{\gamma}\ell(R)^{1-\gamma}}
+ \mathop{\mathop{\sum_{Q \in \mathcal{D}}}_{\ell(Q) \le 2^{-r}\ell(R)}}_{d(Q,R) \le 2n^{1/2}\ell(Q)^{\gamma}\ell(R)^{1-\gamma}} +
\mathop{\mathop{\sum_{Q \in \mathcal{D}}}_{2^{-r}\ell(R) < \ell(Q) \le \ell(R)}}_{d(Q,R) \le 2n^{1/2}\ell(Q)^{\gamma}\ell(R)^{1-\gamma}}\Big).
\end{displaymath}
We denote the corresponding parts of the sum by $\Sigma_i$, $i=1,2,3$.
Goodness will be separately inserted only in the middle sum $\Sigma_2$. We shall now study these sums one by one in the following sections (using both set of assumptions). Note that the sum
$\ell(R) < \ell(Q)$ will then also be in check by the symmetry of our assumptions.

\begin{rem}\label{techrem}
We now give a few technical comments to compare our strategy with previous works based on the use of random dyadic grids.
One can safely ignore these, especially if one is not too familiar with non-homogeneous analysis.

It is natural (if one follows the beautiful strategy pioneered by Nazarov, Treil and Volberg in their deep papers \cite{NTV:Cauchy}, \cite{NTV}, \cite{NTVa} and some others)
to define
\begin{displaymath}
f_{\textrm{good}} = \sum_{Q \in \mathcal{D}_{\textrm{good}}} \Delta_Q f \qquad \textrm{and} \qquad f_{\textrm{bad}} = \sum_{Q \in \mathcal{D}_{\textrm{bad}}} \Delta_Q f,
\end{displaymath}
and then write $f = f_{\textrm{good}} + f_{\textrm{bad}}$. One does the similar thing also for $g$ but using the grid $\mathcal{D}'$ and operators $\Delta_R$.
Then one decomposes
\begin{displaymath}
\langle Tf, g\rangle = \langle Tf_{\textrm{good}}, g_{\textrm{good}}\rangle + \langle Tf_{\textrm{good}}, g_{\textrm{bad}}\rangle + \langle Tf_{\textrm{bad}}, g\rangle.
\end{displaymath}

One usually wants to reduce the considerations to the pairing $\langle Tf_{\textrm{good}}, g_{\textrm{good}}\rangle$ by arguing that the bad parts are small. However, getting a hold of this smallness would typically exploit the dual square function estimate, the failure of which we already saw in our general context of accretive $L^2$ systems 
(see Example \ref{counterex}). However, with a moderate amount of work
and a certain trick we managed to show (also in the $L^2$ case) that, after all, $E\|f_{\textrm{bad}}\|_2 \lesssim c(r)\|f\|_2$, where $c(r) \to 0$ when $r \to \infty$. So this reduction could, nevertheless, always be made.

Here comes another unfortunate snag: in our local situation even the good part, as defined above, seems not so good after all. 
Let us explain. In the global $Tb$ theorems there holds $\Delta_Q f_{\textrm{good}} = \Delta_Q f$, if $Q \in \mathcal{D}_{\textrm{good}}$, and
$\Delta_Q f_{\textrm{good}} = 0$, if $Q \in \mathcal{D}_{\textrm{bad}}$. However, there is no reason for this to be true in this local situation with the more
complicated operators $\Delta_Q$, which in general fail the pairwise orthogonality $\Delta_Q\Delta_R=0$ for $Q\neq R$. This means that in the pairing
\begin{displaymath}
\langle Tf_{\textrm{good}}, g_{\textrm{good}}\rangle = \sum_{R \in \mathcal{D}'_{\textrm{good}}} \sum_{Q \in \mathcal{D}_{\textrm{good}}} \langle T(\Delta_Q f), \Delta_R g\rangle
\end{displaymath}
one cannot remove any goodness from the summation -- which one can in the global situation, if one replaces $\Delta_Q f = \Delta_Q f_{\textrm{good}}$ (and similarly for $g$), and then notes that adding some bad cubes to the sum just amounts to adding zeroes. 

One works hard to add the restriction to good cubes only, so why would one need to remove some of it?
The answer is that in the paraproduct part of the argument there is a subtle phenomenon,
where it is essential that the bigger cube has no restrictions for a certain telescoping sum to collapse. If the bigger cubes
are restricted to be good, the sum does not collapse, and the resulting object seems to be way too complicated to handle.

This is the reason why we choose to modify this earlier strategy, and insert the goodness in a different way. However, the paraproduct still does not become quite as simple as usually, and it is basically for this reason that in the $L^2$ test function case we need the stronger integrability exponent $s>2$ on the operator side.

There are subtle tricks which depend on independence to add and remove goodness, see \cite{Hy2}, \cite{Hy3} and \cite{Ma1}. These cannot be used here either, and this is
basically because $\Delta_Q$ depends not only on the cube $Q$ and its children (like in the global $Tb$ theorems), but also, through the stopping time argument, on the whole grid $\mathcal{D}$ (and this stops one from using certain independence properties).
\end{rem}
\section{Separated cubes}
The following is the long interaction lemma. For a proof in this general upper doubling situation, see \cite[Lemma 6.1 and Lemma 6.2]{HM}.
\begin{lem}
Suppose that $Q \in \mathcal{D}$ and $R \in \mathcal{D}'$ are such that $\ell(Q) \le \ell(R)$ and $d(Q,R) \ge 2n^{1/2}\ell(Q)^{\gamma}\ell(R)^{1-\gamma}$, and that $\varphi_Q$ and $\psi_R$ are $L^2(\mu)$ functions supported on $Q$ and $R$ respectively. Assume also that $\int \varphi_Q \,d\mu= 0$.
Then there holds
\begin{align*}
|\langle T\varphi_Q, \psi_R\rangle| &\lesssim \frac{\ell(Q)^{\alpha}}{d(Q,R)^{\alpha}\sup_{z \in Q} \lambda(z, d(Q,R))}\mu(Q)^{1/2}\mu(R)^{1/2}\|\varphi_Q\|_{L^2(\mu)}\|\psi_R\|_{L^2(\mu)} \\
&\lesssim \frac{\ell(Q)^{\alpha/2}\ell(R)^{\alpha/2}}{D(Q,R)^{\alpha}\sup_{z \in Q} \lambda(z, D(Q,R))}\mu(Q)^{1/2}\mu(R)^{1/2}\|\varphi_Q\|_{L^2(\mu)}\|\psi_R\|_{L^2(\mu)},
\end{align*}
where $D(Q,R) = \ell(Q) + \ell(R) + d(Q,R)$.
\end{lem}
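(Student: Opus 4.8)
The plan is to run the classical Calderón--Zygmund argument: use the single vanishing moment of $\varphi_Q$ against the Hölder smoothness of $K(x,\cdot)$, and then carefully bookkeep the $\lambda$-factors. First note that the separation hypothesis forces
\begin{displaymath}
d(Q,R)\ge 2n^{1/2}\ell(Q)^{\gamma}\ell(R)^{1-\gamma}\ge 2n^{1/2}\ell(Q),
\end{displaymath}
since $\ell(Q)\le\ell(R)$; in particular $Q\cap R=\emptyset$, so on $R$ the operator $T$ is given by the kernel representation. Writing $y_Q$ for the center of $Q$ and using $\int\varphi_Q\,d\mu=0$ to insert $-K(x,y_Q)$, we obtain
\begin{displaymath}
|\langle T\varphi_Q,\psi_R\rangle|\le\int_R\int_Q|K(x,y)-K(x,y_Q)|\,|\varphi_Q(y)|\,|\psi_R(x)|\,d\mu(y)\,d\mu(x).
\end{displaymath}

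Next I would estimate the kernel difference. For $x\in R$ and $y\in Q$ we have $|x-y|\ge d(Q,R)$ and $|y-y_Q|\le\tfrac12 n^{1/2}\ell(Q)\le\tfrac14 d(Q,R)$, so $|x-y|\ge 2|y-y_Q|$ and the third of the standard kernel estimates applies:
\begin{displaymath}
|K(x,y)-K(x,y_Q)|\lesssim\frac{|y-y_Q|^{\alpha}}{|x-y|^{\alpha}\lambda(x,|x-y|)}\lesssim\frac{\ell(Q)^{\alpha}}{d(Q,R)^{\alpha}\lambda(x,|x-y|)},
\end{displaymath}
using the monotonicity of $r\mapsto\lambda(x,r)$. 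Since $\diam Q\le\tfrac12 d(Q,R)$, we have $|x-z|\le|x-y|+\diam Q\le\tfrac32|x-y|$ for every $z\in Q$, and the symmetry property of $\lambda$ (applied at radius $2|x-y|$) together with one step of doubling gives $\lambda(x,|x-y|)\gtrsim\lambda(z,|x-y|)\ge\lambda(z,d(Q,R))$ for each $z\in Q$; because $\diam Q\le d(Q,R)$ the quantities $\lambda(z,d(Q,R))$, $z\in Q$, are mutually comparable, so in fact $\lambda(x,|x-y|)\gtrsim\sup_{z\in Q}\lambda(z,d(Q,R))$. Inserting this bound and then applying Cauchy--Schwarz, $\int_Q|\varphi_Q|\,d\mu\le\mu(Q)^{1/2}\|\varphi_Q\|_{L^2(\mu)}$ and $\int_R|\psi_R|\,d\mu\le\mu(R)^{1/2}\|\psi_R\|_{L^2(\mu)}$, yields the first claimed bound.

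For the second bound it suffices to show that the first kernel factor is at most a constant multiple of the second. Put $D=D(Q,R)=\ell(Q)+\ell(R)+d(Q,R)\ge d(Q,R)$; iterating $\lambda(z,2r)\le C_{\lambda}\lambda(z,r)$ gives $\sup_{z\in Q}\lambda(z,D)\lesssim(D/d(Q,R))^{d}\sup_{z\in Q}\lambda(z,d(Q,R))$ with $d=\log_2 C_{\lambda}$, so the ratio of the first kernel factor to the second is
\begin{displaymath}
\frac{\ell(Q)^{\alpha}}{d(Q,R)^{\alpha}\sup_{z\in Q}\lambda(z,d(Q,R))}\cdot\frac{D^{\alpha}\sup_{z\in Q}\lambda(z,D)}{\ell(Q)^{\alpha/2}\ell(R)^{\alpha/2}}\lesssim\Big(\frac{\ell(Q)}{\ell(R)}\Big)^{\alpha/2}\Big(\frac{D}{d(Q,R)}\Big)^{\alpha+d}.
\end{displaymath}
On the other hand, using $\ell(Q)\le\ell(R)$ and the separation hypothesis,
\begin{displaymath}
\frac{D}{d(Q,R)}=1+\frac{\ell(Q)+\ell(R)}{d(Q,R)}\le 1+\frac{1}{n^{1/2}}\Big(\frac{\ell(R)}{\ell(Q)}\Big)^{\gamma}\lesssim\Big(\frac{\ell(R)}{\ell(Q)}\Big)^{\gamma},
\end{displaymath}
so the ratio above is $\lesssim(\ell(R)/\ell(Q))^{\gamma(\alpha+d)-\alpha/2}$, and the precise value $\gamma=\alpha/(2\alpha+2d)$ makes this exponent equal to $0$; this finishes the proof.

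I expect the only genuinely delicate point to be this last exponent balancing, which is exactly what forces the value of $\gamma$: the gain $(\ell(Q)/\ell(R))^{\alpha/2}$ coming from the two length scales attached to the vanishing moment must precisely compensate the loss $(\ell(R)/\ell(Q))^{\gamma(\alpha+d)}$ incurred when one trades the true gap $d(Q,R)$ for the combined scale $D(Q,R)$, i.e. one needs $\gamma(\alpha+d)=\alpha/2$. A secondary technical nuisance is keeping honest the transition between $\lambda(x,\cdot)$ (with $x\in R$, as it appears in the kernel bound) and $\sup_{z\in Q}\lambda(z,\cdot)$; this is handled entirely by the symmetry upgrade of $\lambda$ and the elementary inequalities $|x-z|\lesssim|x-y|$ and $\diam Q\le d(Q,R)$, both of which are guaranteed by $\ell(Q)\le\ell(R)$ and $d(Q,R)\ge 2n^{1/2}\ell(Q)^{\gamma}\ell(R)^{1-\gamma}$.
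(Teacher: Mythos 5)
Your proof is correct and is exactly the standard long-range interaction argument that the paper delegates to \cite[Lemmas 6.1--6.2]{HM}: exploit the vanishing mean of $\varphi_Q$ against the H\"older regularity of $K(x,\cdot)$, pass from $\lambda(x,\cdot)$ to $\sup_{z\in Q}\lambda(z,\cdot)$ via the symmetry and doubling of $\lambda$, and balance the exponents with $\gamma(\alpha+d)=\alpha/2$. No gaps.
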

The fact that the corresponding matrix generates a bounded operator in $\ell^2$ is the content of the next proposition (this is \cite[Proposition 6.3]{HM}).
\begin{prop}
Let
\begin{displaymath}
T_{QR} = \frac{\ell(Q)^{\alpha/2}\ell(R)^{\alpha/2}}{D(Q,R)^{\alpha}\sup_{z \in Q} \lambda(z, D(Q,R))}\mu(Q)^{1/2}\mu(R)^{1/2},
\end{displaymath}
if $Q \in \mathcal{D}$, $R \in \mathcal{D}'$ and $\ell(Q) \le \ell(R)$, and
\begin{displaymath}
T_{QR} = 0
\end{displaymath}
otherwise. Then there holds with any $x_Q, y_R \ge 0$ that
\begin{displaymath}
\sum_{Q, \, R} T_{QR}x_Qy_R \lesssim \Big( \sum_{Q} x_Q^2 \Big)^{1/2}\Big(\sum_{R}  y_R^2\Big)^{1/2}.
\end{displaymath}
\end{prop}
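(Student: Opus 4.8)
The plan is to deduce the estimate from a Schur-type test applied with the weights $u_Q=\mu(Q)^{1/2}$ and $v_R=\mu(R)^{1/2}$; cubes of vanishing $\mu$-measure may be discarded at the outset, as they carry $T_{QR}=0$. Writing
\begin{displaymath}
T_{QR}x_Qy_R=\Big(T_{QR}^{1/2}\big(\mu(R)/\mu(Q)\big)^{1/4}x_Q\Big)\cdot\Big(T_{QR}^{1/2}\big(\mu(Q)/\mu(R)\big)^{1/4}y_R\Big)
\end{displaymath}
and applying Cauchy--Schwarz, the desired bound follows once one has verified the two testing inequalities
\begin{displaymath}
\sum_{R}T_{QR}\,\mu(R)^{1/2}\lesssim\mu(Q)^{1/2}\ \ (\forall\,Q\in\mathcal{D}),\qquad
\sum_{Q}T_{QR}\,\mu(Q)^{1/2}\lesssim\mu(R)^{1/2}\ \ (\forall\,R\in\mathcal{D}').
\end{displaymath}

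Both inequalities rest on an elementary pointwise bound for $T_{QR}$. Given a pair with $\ell(Q)\le\ell(R)$, write $\ell(Q)=2^{-j}\ell(R)$ with $j\ge0$ and pick the integer $k\ge0$ with $2^k\ell(R)\le D(Q,R)<2^{k+1}\ell(R)$ (legitimate since $D(Q,R)\ge\ell(R)$); then $\ell(Q)^{\alpha/2}\ell(R)^{\alpha/2}D(Q,R)^{-\alpha}\le 2^{-j\alpha/2}2^{-k\alpha}$, so
\begin{displaymath}
T_{QR}\le 2^{-j\alpha/2}\,2^{-k\alpha}\,\frac{\mu(Q)^{1/2}\mu(R)^{1/2}}{\sup_{z\in Q}\lambda(z,D(Q,R))}.
\end{displaymath}
To prove the first testing inequality I fix $Q$ (writing $x_Q$ for its centre) and split the sum over $R$ according to the pair $(j,k)$. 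All cubes $R\in\mathcal{D}'$ entering a fixed group are pairwise disjoint and lie in a ball of radius $\sim 2^k\ell(R)\sim 2^{k+j}\ell(Q)$ about $x_Q$, whence $\sum\mu(R)\lesssim\lambda(x_Q,2^{k+j}\ell(Q))$ by upper doubling and the doubling of $\lambda$; moreover, since $D(Q,R)\sim 2^{k+j}\ell(Q)$ for every $R$ in the group and $|z-x_Q|\lesssim 2^{k+j}\ell(Q)$ for $z\in Q$, the symmetry $\lambda(x,r)\sim\lambda(y,r)$ for $|x-y|\le r$ gives $\sup_{z\in Q}\lambda(z,D(Q,R))\sim\lambda(x_Q,2^{k+j}\ell(Q))$ uniformly over the group. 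These two factors of $\lambda$ cancel, so the $(j,k)$-group contributes at most $2^{-j\alpha/2}2^{-k\alpha}\mu(Q)^{1/2}$, and summing the geometric series over $j,k\ge0$ finishes it. The second testing inequality is entirely symmetric: fix $R$, group the $Q$'s by $(j,k)$, use that the $Q$'s in a group are disjoint and contained in a ball of radius $\sim 2^k\ell(R)$ about the centre of $R$ so that $\sum\mu(Q)\lesssim\lambda(x_R,2^k\ell(R))$, and note $\sup_{z\in Q}\lambda(z,D(Q,R))\gtrsim\lambda(x_R,2^k\ell(R))$ since $D(Q,R)\ge 2^k\ell(R)$ and $|z-x_R|\lesssim 2^k\ell(R)$ for $z\in Q$.

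I expect the only genuinely delicate point to be this bookkeeping with the dominating function $\lambda$: one must ensure that the factor of $\lambda$ coming from summing $\mu$-measures of disjoint cubes over a ball coincides, up to the structural constants $C_\lambda$, with the $\sup_{z\in Q}\lambda(z,D(Q,R))$ in the denominator of $T_{QR}$. This matching is exactly what the arranged properties $\lambda(x,r)\le C\lambda(y,r)$ for $|x-y|\le r$ and $\lambda(x,2r)\le C_\lambda\lambda(x,r)$ are there to guarantee, so provided one keeps the relevant centre-to-point distance comparable to the radius in play, the cancellation goes through and the rest is a plain geometric summation with no dependence on the dimensional parameter $d$.
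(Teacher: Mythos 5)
Your Schur-test argument is correct: the factorization with weights $\mu(Q)^{1/2}$, $\mu(R)^{1/2}$, the grouping by the generation gap $j$ and the distance annulus $k$ with $2^k\ell(R)\le D(Q,R)<2^{k+1}\ell(R)$, and the cancellation of the $\lambda$-factors (monotonicity of $r\mapsto\lambda(x,r)$ for the first testing inequality, the arranged symmetry $\lambda(x,r)\lesssim\lambda(y,r)$ for $|x-y|\le r$ plus finitely many doublings for the second) all go through. The paper itself does not prove this proposition but cites \cite[Proposition 6.3]{HM}, whose proof is essentially this same Cauchy--Schwarz/Schur estimate over generations and annuli; your only imprecision is the closing remark that the constant has ``no dependence on $d$'' --- the boundedly many applications of $\lambda(x,2r)\le C_\lambda\lambda(x,r)$ do make it depend on $C_\lambda$ and $n$, which is of course permitted here.
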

The above combined with the square function estimates
$\big( \sum_Q \|\Delta_Q f\|_2^2 \big)^{1/2} \lesssim \|f\|_2 = 1$ and $\big( \sum_R \|\Delta_R g\|_2^2 \big)^{1/2} \lesssim \|g\|_2 = 1$ yield the following.
\begin{prop}
There holds $|\Sigma_1| \le C$.
\end{prop}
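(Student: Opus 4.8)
The plan is to estimate $\Sigma_1$ term by term with the long interaction lemma, and then sum the resulting bounds using the $\ell^2$-boundedness of the matrix $(T_{QR})$ from the preceding proposition, together with the square function estimates — precisely the recipe announced in the paragraph before the statement. For a pair $(Q,R)$ appearing in $\Sigma_1$ we have $Q\in\mathcal{D}$, $R\in\mathcal{D}'$, $\ell(Q)\le\ell(R)$ and $d(Q,R) > 2n^{1/2}\ell(Q)^{\gamma}\ell(R)^{1-\gamma}$, so the hypotheses of the long interaction lemma are satisfied with $\varphi_Q=\Delta_Q f$ and $\psi_R=\Delta_R g$, provided we can check that $\int\Delta_Q f\,d\mu=0$.

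This mean-zero property is the one small point requiring care, and it is where the normalization (iv) of the accretive systems enters. Summing the definition of $\Delta_Q f$ over $Q'\in\textrm{ch}(Q)$, the ``$Q$-terms'' add up to $-\frac{\langle f\rangle_Q}{\langle b^1_{Q^a}\rangle_Q}b^1_{Q^a}\chi_Q$, whose integral is $-\langle f\rangle_Q\langle b^1_{Q^a}\rangle_Q^{-1}\int_Q b^1_{Q^a}\,d\mu=-\int_Q f\,d\mu$, while the ``$Q'$-terms'' contribute $\sum_{Q'\in\textrm{ch}(Q)}\langle f\rangle_{Q'}\langle b^1_{(Q')^a}\rangle_{Q'}^{-1}\int_{Q'}b^1_{(Q')^a}\,d\mu=\sum_{Q'}\langle f\rangle_{Q'}\mu(Q')=\int_Q f\,d\mu$. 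Hence $\int\Delta_Q f\,d\mu=0$, and the long interaction lemma yields $|\langle T(\Delta_Q f),\Delta_R g\rangle|\lesssim T_{QR}\|\Delta_Q f\|_{L^2(\mu)}\|\Delta_R g\|_{L^2(\mu)}$, with $T_{QR}$ as in the proposition (for the pairs with $\ell(R)<\ell(Q)$ one has $T_{QR}=0$, consistently with the fact that those terms are not part of $\Sigma_1$).

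It then remains to assemble. Applying the matrix proposition with $x_Q=\|\Delta_Q f\|_{L^2(\mu)}$ and $y_R=\|\Delta_R g\|_{L^2(\mu)}$,
\begin{displaymath}
|\Sigma_1|\le\sum_{Q,\,R}T_{QR}\|\Delta_Q f\|_{L^2(\mu)}\|\Delta_R g\|_{L^2(\mu)}\lesssim\Big(\sum_{Q\in\mathcal{D}}\|\Delta_Q f\|_{L^2(\mu)}^2\Big)^{1/2}\Big(\sum_{R\in\mathcal{D}'}\|\Delta_R g\|_{L^2(\mu)}^2\Big)^{1/2}.
\end{displaymath}
Both factors are $\lesssim 1$: in the accretive $L^{\infty}$ case this is classical (see \cite[chapter 3]{NTVa}), and in the accretive $L^2$ case it is Proposition \ref{sqest} applied to $f$, together with its verbatim analogue for $g$ and the system $(b^2_R)$ on $\mathcal{D}'$. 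Note that only the \emph{primal} square function estimate is used here, not the dual one (which fails in general by Example \ref{counterex}), so there is no difficulty. The terms with $\ell(R)<\ell(Q)$ are handled by the symmetry of the assumptions, i.e.\ by repeating the argument for $\langle T^{*}g,f\rangle$ with the roles of $(b^1_Q)$ and $(b^2_R)$ interchanged. Since the genuinely hard inputs — the long interaction estimate and the $\ell^2$-boundedness of $(T_{QR})$ — are already available from \cite{HM}, this separated-cube term is the easy case and presents no real obstacle; the only things to get right are the mean-zero cancellation above and the observation that it is the non-dual square function bound that is needed.
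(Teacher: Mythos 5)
Your proof is correct and follows exactly the route the paper takes: apply the long interaction lemma to $\varphi_Q=\Delta_Q f$, $\psi_R=\Delta_R g$ (the mean-zero check you spell out is indeed the one hypothesis to verify, and your telescoping computation is right), then sum via the $\ell^2$-bound for $(T_{QR})$ and the primal square function estimates. Nothing further is needed.
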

The long range interaction lemma will still have further use to us when dealing with the sum $\Sigma_2$ in the next section.
\section{Cubes well inside another cube and the related bad part}
We shall now deal with $\Sigma_2$. We define
\begin{displaymath}
\Sigma_{2, \,\textrm{bad}} = \sum_{R \in \mathcal{D}'} \mathop{\mathop{\mathop{\sum_{Q \in \mathcal{D}}}_{\ell(Q) \le 2^{-r}\ell(R)}}_{d(Q,R) \le 2n^{1/2}\ell(Q)^{\gamma}\ell(R)^{1-\gamma}}}_{Q
\textrm{ is bad w.r.t. a cube of the size of }R \textrm{ or larger}} \langle T(\Delta_Q f), \Delta_R g\rangle.
\end{displaymath}
The last summing condition just means that there is a cube $S \in \mathcal{D}'$ such that $\ell(S) \ge \ell(R)$
and $d(Q, \textrm{sk}\,S) \le 2n^{1/2}\ell(Q)^{\gamma}\ell(S)^{1-\gamma}$. Then $\Sigma_2 = \Sigma_{2, \,\textrm{good}} + \Sigma_{2, \,\textrm{bad}}$, where
\begin{displaymath}
\Sigma_{2, \,\textrm{good}} = \sum_{R \in \mathcal{D}'} \mathop{\mathop{\mathop{\sum_{Q \in \mathcal{D}}}_{\ell(Q) \le 2^{-r}\ell(R)}}_{d(Q,R) \le 2n^{1/2}\ell(Q)^{\gamma}\ell(R)^{1-\gamma}}}_{Q
\textrm{ is good w.r.t. all the cubes of the size of }R \textrm{ or larger}} \langle T(\Delta_Q f), \Delta_R g\rangle.
\end{displaymath}

\subsection{The disposal of the bad bart $\Sigma_{2, \,\textrm{bad}}$}
Define $\mathcal{D}_{\textrm{bad}, \, A}$ to be the collection of those cubes $Q \in \mathcal{D}$ which are bad with respect to some $\mathcal{D}'$-cube of
side length $A$ or larger. We do not always explicitly write the summing conditions $\ell(Q) \le 2^{-r}\ell(R)$ and $d(Q,R) \le 2n^{1/2}\ell(Q)^{\gamma}\ell(R)^{1-\gamma}$, but these are in force, nevertheless.
We then estimate as follows
\begin{align*}
|\Sigma_{2, \,\textrm{bad}}| &\le \sum_{R \in \mathcal{D}'} \Big| \Big\langle T\Big( \mathop{\sum_{Q \in \mathcal{D}}}_{Q \in \mathcal{D}_{\textrm{bad}, \, \ell(R)}} \Delta_Q f \Big), \Delta_R g\Big\rangle \Big| \\
&\le \|T\| \sum_{R \in \mathcal{D}'} \Big\| \mathop{\sum_{Q \in \mathcal{D}}}_{Q \in \mathcal{D}_{\textrm{bad}, \, \ell(R)}} \Delta_Q f  \Big\|_2 \|\Delta_R g\|_2 \\
&= \|T\| \sum_{R \in \mathcal{D}'} \Big\| \sum_{k \ge r} \mathop{\sum_{Q \in \mathcal{D},\, \ell(Q) = 2^{-k}\ell(R)}}_{Q \in \mathcal{D}_{\textrm{bad}, \, \ell(R)}} \Delta_Q f  \Big\|_2 \|\Delta_R g\|_2 \\
&\le \|T\| \sum_{k \ge r} \sum_{R \in \mathcal{D}'} \Big( \mathop{\sum_{Q \in \mathcal{D},\, \ell(Q) = 2^{-k}\ell(R)}}_{Q \in \mathcal{D}_{\textrm{bad}, \, 2^k\ell(Q)}} \|\Delta_Q f\|_2^2\Big)^{1/2} \|\Delta_R g\|_2 \\
&\le \|T\| \sum_{k \ge r} \Big( \sum_{R \in \mathcal{D}'} \mathop{\sum_{Q \in \mathcal{D},\, \ell(Q) = 2^{-k}\ell(R)}}_{Q \in \mathcal{D}_{\textrm{bad}, \, 2^k\ell(Q)}} \|\Delta_Q f\|_2^2 \Big)^{1/2} \Big( \sum_{R \in \mathcal{D}'} \|\Delta_R g\|_2^2\Big)^{1/2} \\
&\lesssim \|T\| \|g\|_2 \sum_{k \ge r} \Big( \mathop{\sum_{Q \in \mathcal{D}}}_{Q \in \mathcal{D}_{\textrm{bad}, \, 2^k\ell(Q)}} \|\Delta_Q f\|_2^2 \Big)^{1/2},
\end{align*}
where the last estimate used Proposition \ref{sqest} and the fact that given $Q$, there are $\lesssim 1$ cubes $R$ so that $\ell(R) = 2^k\ell(Q)$ and $d(Q,R) \le 2n^{1/2}\ell(Q)^{\gamma}\ell(R)^{1-\gamma}$. Thus, we have (here the expectation $E = E_{\mathcal{D}} E_{\mathcal{D'}} = E_{w} E_{w'}$)
\begin{align*}
E|\Sigma_{2, \,\textrm{bad}}| &\lesssim \|T\| \|g\|_2 E_{\mathcal{D}} \sum_{k \ge r} \Big(\sum_{Q \in \mathcal{D}} \mathbb{P}(Q \in \mathcal{D}_{\textrm{bad}, \, 2^k\ell(Q)}) \|\Delta_Q f\|_2^2 \Big)^{1/2} \\
&\lesssim \|T\| \|g\|_2 E_{\mathcal{D}} \sum_{k \ge r} \Big( 2^{-\gamma k} \sum_{Q \in \mathcal{D}} \|\Delta_Q f\|_2^2 \Big)^{1/2} \\
&\lesssim \|T\| \|f\|_2 \|g\|_2 \sum_{k \ge r} (2^{-\gamma/2})^k = c(r) \|T\|,
\end{align*}
where $c(r) \to 0$, when $r \to \infty$ (recall $\|f\|_2 = \|g\|_2 = 1$). We now fix a large $r$ so that $E|\Sigma_{2, \,\textrm{bad}}| \le \|T\|/16$. We are done with the bad part.

\subsection{Reduction of the good part $\Sigma_{2, \,\textrm{good}}$ to a paraproduct}
Note that if $Q \in \mathcal{D}$ is good with respect to $R$ and $d(Q,R) \le 2n^{1/2}\ell(Q)^{\gamma}\ell(R)^{1-\gamma}$, then there actually is a child $R_1$ of $R$ so that
$Q \subset R_1$ and $d(Q, \partial R_1) > 2n^{1/2}\ell(Q)^{\gamma}\ell(R)^{1-\gamma}$.

\subsubsection{Case $(R_1)^a=R^a$} We begin by assuming that $(R_1)^a=R^a$.
In this case $\Delta_R g = B_{R_1}\chi_{R_1}b^2_{R^a}  +  \chi_{R \setminus R_1}\Delta_R g$.
One may then perform the usual decomposition
\begin{equation}\label{eq:decR1a=Ra}
\begin{split}
\langle T(\Delta_Q f), \Delta_R g\rangle = \langle T(\Delta_Q f), B_{R_1} b^2_{R^a}\rangle - \langle T(\Delta_Q f)&, B_{R_1}(1-\chi_{R_1})b^2_{R^a}\rangle\\
&+ \langle T(\Delta_Q f), \chi_{R \setminus R_1}\Delta_R g\rangle.
\end{split}
\end{equation}
The last term, where $\chi_{R\setminus R_1}=\sum_{i=2}^{2^n}\chi_{R_i}$, can be readily estimated using the long range interaction lemma:
\begin{align*}
|\langle T(\Delta_Q f), \chi_{R_i}\Delta_R g\rangle| &\lesssim \Big(\frac{\ell(Q)}{\ell(R)}\Big)^{\alpha/2}\mu(Q)^{1/2} \frac{\mu(R_i)^{1/2}}{\sup_{z \in Q} \lambda(z, \ell(R_i))} \|\Delta_Q f\|_2 \|\Delta_R g\|_2 \\
&\lesssim \Big(\frac{\ell(Q)}{\ell(R)}\Big)^{\alpha/2} \Big(\frac{\mu(Q)}{\mu(R_1)}\Big)^{1/2}\|\Delta_Q f\|_2 \|\Delta_R g\|_2, \qquad i \ne 1.
\end{align*}
The corresponding matrix is a bounded operator in $\ell^2$ by \cite[Lemma 6.1]{NTVa} (this is a lemma which uses no special properties of the measure). The first term will be part of the soon to be formed paraproduct.

Let us now bound the term $\langle T(\Delta_Q f), B_{R_1}(1-\chi_{R_1})b^2_{R^a}\rangle$ in the middle.
We have with any fixed $z \in Q$ that
\begin{align*}
|\langle T(\Delta_Q f), &(1-\chi_{R_1})b^2_{R^a}\rangle| \\
&=  \Big| \int_{R^a \setminus R_1} \int_Q [K(x,y) - K(x,z)] \Delta_Q f(y)b^2_{R^a}(x)\,d\mu(y)\,d\mu(x)\Big| \\
&\lesssim \|\Delta_Q f\|_1 \int_{R^a \setminus R_1} \frac{\ell(Q)^{\alpha}}{|x-z|^{\alpha}\lambda(z, |x-z|)} |b^2_{R^a}(x)|d\mu(x).
\end{align*}
Let us first bound this in the easier case of the $L^{\infty}$ test functions.
We have that
\begin{align*}
\int_{R^a \setminus R_1} \frac{\ell(Q)^{\alpha}}{|x-z|^{\alpha}\lambda(z, |x-z|)} |b^2_{R^a}(x)|d\mu(x) &\lesssim \ell(Q)^{\alpha}\int_{\R^n \setminus B(z, d(Q, \partial R_1))} \frac{|x-z|^{-\alpha}}{\lambda(z,|x-z|)}d\mu(x) \\
&\lesssim \ell(Q)^{\alpha}d(Q, \partial R_1)^{-\alpha} \lesssim \Big(\frac{\ell(Q)}{\ell(R)}\Big)^{\alpha/2},
\end{align*}
where we used \cite[Lemma 2.4]{HM} and the fact that $d(Q, \partial R_1) \gtrsim \ell(Q)^{1/2}\ell(R)^{1/2}$.

Let us now establish the same bound in the case of $L^2$ test functions (we do not even need a doubling measure for this -- so this gives another proof of the above estimate too). Here we need to use the fact that $Q$ is good with respect to $R$ and all the bigger cubes.
Let $M$ be such that $(R_1)^{(M+1)} = R^a$. We have
\begin{align*}
\int_{R^a \setminus R_1} &\frac{\ell(Q)^{\alpha}}{|x-z|^{\alpha}\lambda(z, |x-z|)} |b^2_{R^a}(x)|d\mu(x) \\
&= \ell(Q)^{\alpha} \sum_{j=0}^M \int_{(R_1)^{(j+1)} \setminus (R_1)^{(j)}} \frac{|b^2_{R^a}(x)|}{|x-z|^{\alpha}\lambda(z, |x-z|)}\,d\mu(x).
\end{align*}
There holds (since $\gamma(\alpha +d) = \alpha/2$) that
\begin{align*}
  |x-z|^{\alpha}\lambda(z, |x-z|)
  &\gtrsim d(Q,\partial R_1^{(j)})^{\alpha}\lambda(z,d(Q,\partial R_1^{(j)})) \\
  &\gtrsim \ell(Q)^{\gamma \alpha}\ell((R_1)^{(j)})^{\alpha - \gamma\alpha} \Big(\frac{\ell((R_1)^{(j)})}{\ell(Q)}\Big)^{-\gamma d} \mu((R_1)^{(j+1)})\\
  &= \ell(Q)^{\alpha/2}\ell((R_1)^{(j)})^{\alpha/2} \mu((R_1)^{(j+1)}),
\end{align*}
and so using the fact that $\int_{(R_1)^{(j+1)}} |b^2_{R^a}| \,d\mu \lesssim \mu((R_1)^{(j+1)})$, we have
\begin{displaymath}
\int_{R^a \setminus R_1} \frac{\ell(Q)^{\alpha}}{|x-z|^{\alpha}\lambda(z, |x-z|)} |b^2_{R^a}(x)|d\mu(x) \lesssim \Big(\frac{\ell(Q)}{\ell(R)}\Big)^{\alpha/2} \sum_{j=0}^{\infty} 2^{-\alpha j/2} \lesssim \Big(\frac{\ell(Q)}{\ell(R)}\Big)^{\alpha/2}.
\end{displaymath}

As $|B_{R_1}| \lesssim \mu(R_1)^{-1/2} \|\Delta_R g\|_2$, we have shown that
\begin{displaymath}
|\langle T(\Delta_Q f), B_{R_1}(1-\chi_{R_1})b^2_{R^a}\rangle| \lesssim \Big(\frac{\mu(Q)}{\mu(R_1)}\Big)^{1/2} \Big( \frac{\ell(Q)}{\ell(R)}\Big)^{\alpha/2} \|\Delta_Q f\|_2 \|\Delta_R g\|_2,
\end{displaymath}
and this is known to be acceptable (see again \cite[Lemma 6.1]{NTVa}).

\subsubsection{Case $(R_1)^a = R_1$}
We then assume that $(R_1)^a = R_1$. In this case we write $B_{R_1} = \langle g \rangle_{R_1} / \langle b^2_{R_1} \rangle_{R_1}$ and $C_R = \langle g \rangle_R / \langle b^2_{R^a} \rangle_R$, and then decompose as follows:
\begin{equation}\label{eq:decR1a=R1}
\begin{split}
\langle T(\Delta_Q f), \Delta_R g\rangle = \langle T&(\Delta_Q f), B_{R_1}b^2_{R_1} \rangle - \langle T(\Delta_Q f), C_R b^2_{R^a} \rangle \\
&+ C_R\langle T(\Delta_Q f), (1-\chi_{R_1})b^2_{R^a} \rangle + \langle T(\Delta_Q f), \chi_{R \setminus R_1}\Delta_R g\rangle.
\end{split}
\end{equation}

The last term, being identical to the last term in \eqref{eq:decR1a=Ra}, is again handled using the long range interaction lemma. The next to last term is also estimated as above, except that this time we have $|C_R| \lesssim |\langle g \rangle_R|$, so we get
\begin{displaymath}
|C_R\langle T(\Delta_Q f), (1-\chi_{R_1})b^2_{R^a} \rangle| \lesssim \Big(\frac{\mu(Q)}{\mu(R_1)}\Big)^{1/2} \Big( \frac{\ell(Q)}{\ell(R)}\Big)^{\alpha/2} \|\Delta_Q f\|_2 \mu(R_1)^{1/2} |\langle g \rangle_R|.
\end{displaymath}
This is again fine by \cite[Lemma 6.1]{NTVa}, since
\begin{displaymath}
\Big( \mathop{\sum_{R \in \mathcal{D}'}}_{(R_1)^a = R_1} |\langle g \rangle_R|^2 \mu(R_1)\Big)^{1/2} \lesssim \|g\|_2
\end{displaymath}
by Carleson's embedding theorem. The first two terms in \eqref{eq:decR1a=R1} will be part of the paraproduct.

\subsection{The paraproduct and its boundedness}
Let $\mathcal{D}_{\textrm{good},\,k}$ be the collection of those $Q \in \mathcal{D}$ which are good with respect to all $\mathcal{D}'$-cubes of side length
$2^k\ell(Q)$ and larger. If $Q \in \bigcup_{k \ge r} \mathcal{D}_{\textrm{good},\, k}$, let $\alpha(Q)$ be the smallest index $k$ so that $Q \in \mathcal{D}_{\textrm{good},\, k}$.
So collecting the terms that we did not yet estimate in \eqref{eq:decR1a=Ra} and \eqref{eq:decR1a=R1}, we see that we need to bound
\begin{displaymath}
\mathop{\sum_{Q \in \cup_{k \ge r} \mathcal{D}_{\textrm{good},\, k}}}_{Q \subset R_0} \mathop{\mathop{\sum_{R \in \mathcal{D}'}}_{\ell(R) \ge 2^{\alpha(Q)}\ell(Q)}}_{d(Q,R) \le 2n^{1/2}\ell(Q)^{\gamma}\ell(R)^{1-\gamma}} 
\Big\langle T(\Delta_Q f),
\frac{\langle g \rangle_{R_1}}{\langle b^2_{(R_1)^a}\rangle_{R_1}}b^2_{(R_1)^a} - \frac{\langle g \rangle_R }{\langle b^2_{R^a} \rangle_R }b^2_{R^a}\Big\rangle.
\end{displaymath}
Note that there is a unique $R$ of each side length in the inner sum, the one with $R \supset Q$.
In the above summation, let $S(Q) \in \mathcal{D}'$ be $R_1$, when $\ell(R) = 2^{\alpha(Q)}\ell(Q)$.
Then bringing the $R$ summation inside the pairing, we see that the sum collapses to
\begin{displaymath}
\mathop{\sum_{Q \in \cup_{k \ge r} \mathcal{D}_{\textrm{good},\, k}}}_{Q \subset R_0} \Big\langle T(\Delta_Q f), \frac{\langle g \rangle_{S(Q)}}{\langle b^2_{S(Q)^a}\rangle_{S(Q)}}b^2_{S(Q)^a} - \frac{\langle g \rangle_{R_0}}{\langle b^2_{R_0}\rangle_{R_0}}b^2_{R_0}  \Big\rangle.
\end{displaymath}
We write this in the form
\begin{displaymath}
\sum_{R \in \mathcal{D}'} \mathop{\sum_{Q \in \cup_{k \ge r} \mathcal{D}_{\textrm{good},\, k}}}_{S(Q) = R} \Big\langle T(\Delta_Q f), \frac{\langle g \rangle_R}{\langle b^2_{R^a}\rangle_R}b^2_{R^a} \Big\rangle
- \mathop{\sum_{Q \in \cup_{k \ge r} \mathcal{D}_{\textrm{good},\, k}}}_{Q \subset R_0}  \Big\langle T(\Delta_Q f), \frac{\langle g \rangle_{R_0}}{\langle b^2_{R_0}\rangle_{R_0}}b^2_{R_0}  \Big\rangle.
\end{displaymath}
So we were able to collapse the sum because we introduced the goodness in a more restricted way than is usually done (see Remark \ref{techrem}). But the result is somewhat different from the usual paraproducts, since $S(Q)$ can be arbitrarily larger than $Q$.

At this stage we bring the absolute values inside the summations. We may then consider the following, somewhat more general, situation. Let us be given a collection $\mathcal{F} \subset \mathcal{D}$ so that
to every cube $Q \in \mathcal{F}$ there is associated a unique cube $F(Q) \in \mathcal{D}'$ for which there holds $Q \subset F(Q)$. The rest of this section is concerned with proving that
\begin{displaymath}
\sum_{R \in \mathcal{D}'} \mathop{\sum_{Q \in \mathcal{F}}}_{F(Q) = R} |\langle T(\Delta_Q f), \langle g \rangle_R b^2_{R^a}\rangle| \lesssim  1.
\end{displaymath}

We begin by recalling from \cite[p. 271]{NTVa} that
\begin{displaymath}
\Delta_Qf = (\Delta_Q)^2f + \mathop{\sum_{P \in\, \textrm{ch}(Q)}}_{P^a =P} \varphi_P,
\end{displaymath}
where
\begin{displaymath}
\varphi_P = \frac{\langle f \rangle_Q}{\langle b^1_{Q^a} \rangle_Q} \Big[ \frac{\langle b^1_{Q^a} \rangle_P}{\langle b^1_P \rangle_P} b^1_P - b^1_{Q^a}\Big]\chi_P.
\end{displaymath}
It follows that always
\begin{displaymath}
\|\varphi_P\|_2 \lesssim |\langle f \rangle_Q| ( |\langle b^1_{Q^a} \rangle_P| \|b^1_P\|_2 + \|\chi_P b^1_{Q^a}\|_2).
\end{displaymath}
This can be further bounded by $|\langle f \rangle_Q| \mu(P)^{1/2}$ (in the $L^2$ case, the doubling property is needed here).

We estimate
\begin{align*}
\sum_{R \in \mathcal{D}'} \mathop{\sum_{Q \in \mathcal{F}}}_{F(Q) = R} |\langle T(\Delta_Q f), \langle g \rangle_R b^2_{R^a}\rangle| \le I + II,
\end{align*}
where
\begin{align*}
I &= \sum_{R \in \mathcal{D}'} \mathop{\sum_{Q \in \mathcal{F}}}_{F(Q) = R} |\langle \Delta_Q f, \langle g \rangle_R (\Delta_Q)^*T^*b^2_{R^a}\rangle|, \\
II &= \sum_{R \in \mathcal{D}'} \mathop{\sum_{Q \in \mathcal{F}}}_{F(Q) = R} \mathop{\sum_{P \in\, \textrm{ch}(Q)}}_{P^a =P} |\langle \varphi_P, \langle g \rangle_R \chi_P T^*b^2_{R^a}\rangle|.
\end{align*}
There holds that
\begin{align*}
I &\le \Big( \sum_{Q \in \mathcal{D}} \|\Delta_Q f\|_2^2\Big)^{1/2} \Big( \sum_{R \in \mathcal{D}'} |\langle g \rangle_R|^2 \mathop{\sum_{Q \in \mathcal{F}}}_{F(Q) = R} \|(\Delta_Q)^*T^*b^2_{R^a}\|_2^2\Big)^{1/2}\\
&\lesssim \Big( \sum_{R \in \mathcal{D}'} |\langle g \rangle_R|^2 a_R\Big)^{1/2},
\end{align*}
where $a_R = \sum_{Q \in \mathcal{F}, \,F(Q) = R} \|(\Delta_Q)^*T^*b^2_{R^a}\|_2^2$. Also, there holds
\begin{align*}
II &\le \Big(\sum_{Q \in \mathcal{D}} |\langle f \rangle_Q|^2 \mathop{\sum_{P \in \, \textrm{ch}(Q)}}_{P^a =P} \mu(P) \Big)^{1/2} \Big( \sum_{R \in \mathcal{D}'} |\langle g \rangle_R|^2 \mathop{\sum_{Q \in \mathcal{F}}}_{F(Q) = R} \mathop{\sum_{P \in\, \textrm{ch}(Q)}}_{P^a =P}
\|\chi_P T^*b^2_{R^a}\|_2^2\Big)^{1/2} \\
&\lesssim \Big( \sum_{R \in \mathcal{D}'} |\langle g \rangle_R|^2 b_R \Big)^{1/2},
\end{align*}
where $b_R = \sum_{Q \in \mathcal{F}, \,F(Q) = R} \sum_{P \in \, \textrm{ch}(Q), \, P^a = P} \|\chi_P T^*b^2_{R^a}\|_2^2$.

We are reduced to showing that $(a_R)$ and $(b_R)$ form Carleson sequences (both in the $L^{\infty}$ test function and in the $L^2$ test function case).
\begin{lem}
The sequence
\begin{displaymath}
a_R = \mathop{\sum_{Q \in \mathcal{F}}}_{F(Q) = R} \|(\Delta_Q)^*T^*b^2_{R^a}\|_2^2
\end{displaymath}
is Carleson.
\end{lem}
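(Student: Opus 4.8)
The goal is to verify the Carleson condition $\sum_{R\in\mathcal{D}',\,R\subseteq R_0}a_R\lesssim\mu(R_0)$ for every $R_0\in\mathcal{D}'$. The first step I would take is to reorganize the double sum $\sum_{R\subseteq R_0}\sum_{Q:\,F(Q)=R}$ by the $\mathcal{D}'$-stopping ancestor $P':=R^a=F(Q)^a$. For a fixed $P'$, the fibres $\{Q:F(Q)=R\}$ over the distinct cubes $R$ with $R^a=P'$, $R\subseteq R_0$, are pairwise disjoint (since $F$ is a function), so their union $\mathcal{H}_{P'}$ is an honest collection, and every $Q\in\mathcal{H}_{P'}$ satisfies $Q\subseteq F(Q)\subseteq V$, where $V:=P'\cap R_0$ is simply the smaller of the two nested $\mathcal{D}'$-cubes $P'$ and $R_0$. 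One checks that the only $P'$ occurring are either contained in $R_0$, or equal to the unique smallest $\mathcal{D}'$-stopping cube containing $R_0$ (only possible if $R_0$ is not itself a stopping cube); since $(\Delta_Q)^*h$ depends only on $h|_Q$ we may write $(\Delta_Q)^*T^*b^2_{P'}=(\Delta_Q)^*(\chi_V T^*b^2_{P'})$, and by Lemma \ref{mescar} applied in the grid $\mathcal{D}'$ the quantities $\mu(V)$ sum (over the relevant $P'$) to $\lesssim\mu(R_0)$, the single exceptional top term being itself $\le\mu(R_0)$. Hence it suffices to prove, for each relevant $P'$, the estimate $\sum_{Q\in\mathcal{H}_{P'}}\|(\Delta_Q)^*(\chi_V T^*b^2_{P'})\|_2^2\lesssim\mu(V)$.

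For accretive $L^\infty$ systems this is immediate from the (valid) dual square function estimate $\sum_{Q\in\mathcal{D}}\|(\Delta_Q)^*h\|_2^2\lesssim\|h\|_2^2$ recalled at the start of the relevant subsection: the left side is $\lesssim\|\chi_V T^*b^2_{P'}\|_2^2\le\|T^*b^2_{P'}\|_\infty^2\mu(V)\lesssim\mu(V)$, and we are done. For accretive $L^2$ systems the dual square function estimate is false in general (Example \ref{counterex}), so instead I would split $\mathcal{H}_{P'}$ by the $\mathcal{D}$-stopping ancestor $P=Q^a$ and invoke Proposition \ref{subs}: for each such $P$,
\begin{displaymath}
\sum_{\substack{Q\in\mathcal{H}_{P'}\\ Q^a=P}}\|(\Delta_Q)^*(\chi_V T^*b^2_{P'})\|_2^2\le\sum_{\substack{Q\in\mathcal{D}\\ Q^a=P}}\|(\Delta_Q)^*(\chi_V T^*b^2_{P'})\|_2^2\lesssim\|\chi_{P\cap V}T^*b^2_{P'}\|_2^2 .
\end{displaymath}
Summing over $P$ reduces matters to the bound $\int_V|T^*b^2_{P'}|^2\,N\,d\mu\lesssim\mu(V)$, where $N=\sum_P\chi_P$ is the overlap function of those $\mathcal{D}$-stopping cubes $P$ that arise as $Q^a$ for some $Q\in\mathcal{H}_{P'}$. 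Two facts then finish the argument. First, $\int_V|T^*b^2_{P'}|^s\,d\mu\lesssim\mu(V)$: if $V=P'$ this is assumption (iii), while if $V=R_0\subsetneq P'$ then $R_0$ lies in the stopping region of $P'$, so the stopping condition involving $T^*b^2_{P'}$ on $R_0$ failed. Second, $\int_V N^{(s/2)'}\,d\mu\lesssim\mu(V)$. Combining these with Hölder's inequality with exponents $s/2$ and $(s/2)'$,
\begin{displaymath}
\int_V|T^*b^2_{P'}|^2\,N\,d\mu\le\Big(\int_V|T^*b^2_{P'}|^s\,d\mu\Big)^{2/s}\Big(\int_V N^{(s/2)'}\,d\mu\Big)^{(s-2)/s}\lesssim\mu(V),
\end{displaymath}
which is what we wanted.

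For the estimate $\int_V N^{(s/2)'}\,d\mu\lesssim\mu(V)$ I would split $N$ into the contribution of stopping cubes $P$ with $\ell(P)\le\ell(V)$ and those with $\ell(P)>\ell(V)$. The first contribution is handled by the Carleson property of the $\mathcal{D}$-stopping cubes (Lemma \ref{mescar}), using that such $P$ meeting $V$ are contained in a bounded dilate of $V$ and that $\mu$ is doubling. For the second contribution one uses that each such $P$ equals $Q^a$ for a good cube $Q\in\mathcal{H}_{P'}$, which forces $d(Q,\partial V)\gtrsim\ell(Q)^\gamma\ell(V)^{1-\gamma}$, together with the exponential decay $\mu\big(\bigcup_{Q'\in\mathcal{D}^{t+j},\,Q'\subseteq Q}Q'\big)\le\tau^{j-1}\mu(Q)$ of the stopping construction, so that the large stopping cubes piling up over $V$ do so only through a controlled overlap.

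The main obstacle is precisely this last point — bounding the overlap function $N$ in the $L^2$ case. In contrast with the global theory, a good cube $Q$ can have its $\mathcal{D}$-stopping ancestor $Q^a$ arbitrarily much larger than $Q$, so there is no pointwise bound on $N$, and a genuinely crude estimate only gives $N\lesssim\log\big(\ell(Q_0)/\ell(V)\big)$, which is not summable after the reduction above. It is exactly to absorb this overlap through the Hölder step — which needs $(s/2)'<\infty$ — that one must impose the strictly-better-than-$L^2$ integrability $s>2$ on the operator side; with only $L^2$ control of $T^*b^2_Q$ the argument collapses, consistently with Example \ref{counterex} and the remark following Proposition \ref{subs}. (In the alternative scenario of $L^q$ test functions with $q>2$, Proposition \ref{subs} already upgrades to the full dual square function estimate and one is back in the easy $L^\infty$-type situation described above.)
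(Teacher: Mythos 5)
Your overall architecture matches the paper's (reduce to one estimate per $\mathcal{D}'$-stopping ancestor, handle the $L^\infty$ case by the dual square function estimate, and in the $L^2$ case combine Proposition \ref{subs} with H\"older at exponent $s/2$ against an $L^{(s/2)'}$ bound for an overlap function of $\mathcal{D}$-stopping cubes), and your initial reduction to $\sum_{Q\in\mathcal{H}_{P'}}\|(\Delta_Q)^*(\chi_V T^*b^2_{P'})\|_2^2\lesssim\mu(V)$ is correct. But there is a genuine gap exactly at the point you flag as the main obstacle. By splitting $\mathcal{H}_{P'}$ directly according to $P=Q^a$ you let into the overlap function $N$ stopping cubes $P$ that are arbitrarily much larger than $V$, and your proposed control of these --- goodness of $Q$ plus the decay $\mu(\bigcup_{Q'\in\mathcal{D}^{t+j},Q'\subset Q}Q')\le\tau^{j-1}\mu(Q)$ --- does not work. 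First, the lemma is stated for an \emph{arbitrary} collection $\mathcal{F}$ with $Q\subset F(Q)$; no goodness is assumed (and the paper's proof uses none), so you may not invoke it. Second, even in the application goodness constrains the position of $Q$ relative to the grid $\mathcal{D}'$, whereas the cubes $P=Q^a$ piling up over $V$ belong to $\mathcal{D}$; a lower bound on $d(Q,\partial V)$ says nothing about how many $\mathcal{D}$-stopping ancestors sit above $Q$. Third, the generation decay bounds $\mu(P_m)$ against the measure of the \emph{top} cube of the tower, which may be enormous compared with $\mu(V)$, so it gives nothing relative to $V$. What actually saves the large-cube contribution is a purely two-grid geometric fact you do not state: in each coordinate direction at most one dyadic hyperplane of $\mathcal{D}$ of scale $\ge 2\ell(V)$ can meet $V$, so if $P_1\supsetneq\dots\supsetneq P_m$ are stopping cubes with $\ell(P_i)>\ell(V)$, each witnessed by some $Q_i\subset V\cap(P_i\setminus P_{i+1})$, then the sets $V\cap P_i$ form a strictly decreasing chain of at most $n+1$ truncations of $V$ by those hyperplanes, whence $m\le n+1$ and the large-cube part of $N$ is bounded pointwise by a dimensional constant. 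Without this (or an equivalent) argument your H\"older step is unjustified. (For the small-cube part, note also that Lemma \ref{mescar} as stated is only an $L^1$ bound on $N$; to get the $L^{(s/2)'}$ bound you need the generation-wise geometric decay, i.e.\ the self-improvement of the Carleson property, which is where that decay is really used.)

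The paper's proof sidesteps the large-ancestor problem entirely, and this is the cleanest repair: before splitting by $Q^a$, first pass to the collection $M(V)$ of \emph{maximal} $\mathcal{D}$-cubes contained in $V$. For each $Q_0\in M(V)$, every $U\subset Q_0$ has either $U^a=Q_0^a$ or $U^a=K$ for some stopping cube $K\subset Q_0$; Proposition \ref{subs}, localized with $\chi_{Q_0}$ since $(\Delta_U)^*h$ depends only on $h|_U$, handles the entire first class in one stroke with the bound $\|\chi_{Q_0}\chi_V T^*b^2_{P'}\|_2^2$, so the only cubes entering the overlap function are the disjoint $Q_0\in M(V)$ and the stopping cubes inside them, whose $L^{p'}$ norms are summed using the generation decay and the disjointness of $M(V)$. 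No stopping cube larger than $V$ ever appears.
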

\begin{proof}
Consider an arbitrary $R \in \mathcal{D}'$. 
We write
\begin{align*}
\mathop{\sum_{S \in \mathcal{D}'}}_{S \subset R} a_S &= \mathop{\sum_{S \in \mathcal{D}'}}_{S \subset R} \mathop{\sum_{Q \in \mathcal{F}}}_{F(Q) = S} \|(\Delta_Q)^*T^*b^2_{S^a}\|_2^2 \\
&= \Big(\mathop{\mathop{\sum_{S \in \mathcal{D}'}}_{S \subset R}}_{S^a = R^a} + \mathop{\sum_{H \subset R}}_{H^a = H} \mathop{\sum_{S \in \mathcal{D}'}}_{S^a = H}\Big) \mathop{\sum_{Q \in \mathcal{F}}}_{F(Q) = S} \|(\Delta_Q)^*T^*b^2_{S^a}\|_2^2.
\end{align*}
We are reduced to showing that for an arbitrary $H \in \mathcal{D}'$ there holds
\begin{displaymath}
I_H := \mathop{\mathop{\sum_{S \in \mathcal{D}'}}_{S \subset H}}_{S^a = H^a} \mathop{\sum_{Q \in \mathcal{F}}}_{F(Q) = S} \|(\Delta_Q)^*T^*b^2_{H^a}\|_2^2 \lesssim \mu(H).
\end{displaymath}
We estimate as follows
\begin{displaymath}
I_H \le \mathop{\sum_{Q \in \mathcal{D}}}_{Q \subset H} \|(\Delta_Q)^*T^*b^2_{H^a}\|_2^2 = \sum_{Q \in M(H)} \mathop{\sum_{U \in \mathcal{D}}}_{U \subset Q} \|(\Delta_U)^*T^*b^2_{H^a}\|_2^2,
\end{displaymath}
where $M(H)$ consists of maximal $Q \in \mathcal{D}$ for which $Q \subset H$. Now the claim is very easy in the $L^{\infty}$ case. Just use the dual square function estimate and the fact that $\|\chi_Q T^*b^2_{H^a}\|_2^2 \lesssim \mu(Q)$.

We are thus reduced to the case $\mu = \nu$ with $L^2$ test functions. For a given $Q \in M(H)$ we estimate using Proposition \ref{subs} that
\begin{align*}
\mathop{\sum_{U \in \mathcal{D}}}_{U \subset Q} \|(\Delta_U)^*T^*b^2_{H^a}\|_2^2 &= \Big(\mathop{\mathop{\sum_{U \in \mathcal{D}}}_{U \subset Q}}_{U^a = Q^a} + \mathop{\mathop{\sum_{K \in \mathcal{D}}}_{K \subset Q}}_{K^a = K}
\mathop{\sum_{U \in \mathcal{D}}}_{U^a = K} \Big) \|(\Delta_U)^*T^*b^2_{H^a}\|_2^2 \\
&\lesssim \|\chi_Q\chi_H T^*b^2_{H^a}\|_2^2 + \mathop{\mathop{\sum_{K \in \mathcal{D}}}_{K \subset Q}}_{K^a = K} \|\chi_K\chi_H T^*b^2_{H^a}\|_2^2.
\end{align*}
Thus, there holds
\begin{align*}
I_H &\lesssim \int \Big( \sum_{Q \in M(H)} \chi_Q \Big)\chi_H |T^*b^2_{H^a}|^2\,d\nu + \int \Big( \sum_{Q \in M(H)} \mathop{\mathop{\sum_{K \in \mathcal{D}}}_{K \subset Q}}_{K^a = K} \chi_K \Big) \chi_H |T^*b^2_{H^a}|^2\,d\nu \\
&\le \Big( \Big\| \sum_{Q \in M(H)} \chi_Q \Big\|_{p'}  +  \Big\| \sum_{Q \in M(H)} \mathop{\mathop{\sum_{K \in \mathcal{D}}}_{K \subset Q}}_{K^a = K} \chi_K\Big\|_{p'} \Big)   \Big( \int_H |T^*b^2_{H^a}|^s\,d\nu \Big)^{1/p} \\
&\lesssim \Big( \Big\| \sum_{Q \in M(H)} \chi_Q \Big\|_{p'}  +  \Big\| \sum_{Q \in M(H)} \mathop{\mathop{\sum_{K \in \mathcal{D}}}_{K \subset Q}}_{K^a = K} \chi_K\Big\|_{p'} \Big) \nu(H)^{1/p},
\end{align*}
where $p = s/2 > 1$.

Note that
\begin{align*}
\Big\| \mathop{\mathop{\sum_{K \in \mathcal{D}}}_{K \subset Q}}_{K^a = K} \chi_K \Big\|_{p'}^{p'} = \Big\|& \mathop{\sum_j \mathop{\sum_{K \in \mathcal{D}^j}}_{K \subset Q}} \chi_K \Big\|_{p'}^{p'} 
\\&\le \Big( \sum_j  \Big\| \mathop{\sum_{K \in \mathcal{D}^j}}_{K \subset Q} \chi_K \Big\|_{p'} \Big)^{p'} \lesssim \Big( \sum_j  \tau^{j/p'} \Big)^{p'} \nu(Q) \lesssim \nu(Q),
\end{align*}
and so
\begin{align*}
\Big\| \sum_{Q \in M(H)} \chi_Q \Big\|_{p'}^{p'}  +  \Big\| \sum_{Q \in M(H)} \mathop{\mathop{\sum_{K \in \mathcal{D}}}_{K \subset Q}}_{K^a = K} \chi_K\Big\|_{p'}^{p'} &= \sum_{Q \in M(H)} \Big( \nu(Q)
+ \Big\| \mathop{\mathop{\sum_{K \in \mathcal{D}}}_{K \subset Q}}_{K^a = K} \chi_K \Big\|_{p'}^{p'} \Big) \\
&\lesssim \sum_{Q \in M(H)} \nu(Q) \le \nu(H).
\end{align*}
This establishes that $I_H \lesssim \nu(H)^{1/p'}\nu(H)^{1/p} = \nu(H)$, as was the goal.
\end{proof}
\begin{lem}
The sequence
\begin{displaymath}
b_R = \mathop{\sum_{Q \in \mathcal{F}}}_{F(Q) = R} \mathop{\sum_{P \in\, \textrm{ch}(Q)}}_{P^a =P} \|\chi_P T^*b^2_{R^a}\|_2^2
\end{displaymath}
is Carleson.
\end{lem}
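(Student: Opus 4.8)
The plan is to follow the proof of the preceding lemma, which is in fact simpler here: the honest squares $\|\chi_P T^*b^2_{R^a}\|_2^2$ are already in place, so nothing like Proposition~\ref{subs} will be needed. I would fix $R \in \mathcal{D}'$, aim for $\sum_{S \in \mathcal{D}',\, S \subset R} b_S \lesssim \mu(R)$, and split this sum according to whether $S^a = R^a$ or $S^a = H$ for some stopping cube $H \subsetneq R$ (these two cases are exhaustive, since $S$, $R$, $S^a$ all lie in $\mathcal{D}'$ and are therefore nested). Summing the contributions of the stopping cubes $H \subsetneq R$ by Lemma~\ref{mescar}, the matter reduces to showing, for an arbitrary $H \in \mathcal{D}'$,
\begin{displaymath}
I_H := \mathop{\sum_{S \in \mathcal{D}',\, S \subset H}}_{S^a = H^a} \mathop{\sum_{Q \in \mathcal{F}}}_{F(Q) = S} \mathop{\sum_{P \in\, \textrm{ch}(Q)}}_{P^a = P} \|\chi_P T^*b^2_{H^a}\|_2^2 \lesssim \mu(H).
\end{displaymath}

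The crucial observation I would exploit is purely combinatorial: every cube $P$ occurring in this triple sum is a $\mathcal{D}$-stopping cube (that is exactly the condition $P^a = P$), it satisfies $P \subset Q \subset F(Q) = S \subset H$, and it is counted \emph{at most once}, because $Q = P^{(1)}$ is determined by $P$ and then $S = F(Q)$ is determined by $Q$; so the constraints $Q \in \mathcal{F}$, $F(Q)=S$, $S \subset H$, $S^a = H^a$ either all hold for this $P$ or not. Discarding those constraints gives
\begin{displaymath}
I_H \le \mathop{\sum_{P \in \bigcup_j \mathcal{D}^j}}_{P \subset H} \|\chi_P T^*b^2_{H^a}\|_2^2 = \int \Big(\mathop{\sum_{P \in \bigcup_j \mathcal{D}^j}}_{P \subset H} \chi_P\Big)\chi_H |T^*b^2_{H^a}|^2\,d\mu.
\end{displaymath}
In the $L^{\infty}$ case this finishes things at once: bound $|T^*b^2_{H^a}|^2$ by a constant and use $\sum_{P \in \bigcup_j \mathcal{D}^j,\, P \subset H}\mu(P) \lesssim \mu(H)$, which follows by covering $H$ with its maximal $\mathcal{D}$-subcubes and applying inside each the geometric decay $\sum_{P \in \mathcal{D}^{t+j},\, P \subset Q}\mu(P) \le \tau^{j-1}\mu(Q)$ (for $Q^a \in \mathcal{D}^t$). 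In the $L^2$ case ($\mu=\nu$, with $L^s$ control on the operator side) I would instead apply H\"older with $p = s/2 > 1$, reducing to the two bounds $\big\|\sum_{P \in \bigcup_j \mathcal{D}^j,\, P \subset H}\chi_P\big\|_{L^{p'}(\nu)} \lesssim \nu(H)^{1/p'}$ (same maximal-subcube decomposition, then the triangle inequality in $L^{p'}$ together with $\sum_j\|\sum_{P \in \mathcal{D}^{t+j},\, P \subset Q}\chi_P\|_{p'} \lesssim (\sum_j \tau^{j/p'})\nu(Q)^{1/p'}\lesssim\nu(Q)^{1/p'}$, exactly as in the previous lemma) and $\int_H |T^*b^2_{H^a}|^s\,d\nu \lesssim \nu(H)$; for the latter, if $H^a = H$ this is assumption (iii) for $b^2_H$, while if $H^a \supsetneq H$ the cube $H$ fails the $T^*$-stopping test (otherwise a stopping cube would sit strictly between $H$ and $H^a$), so that $\int_H |T^*b^2_{H^a}|^s\,d\nu \le \delta^{-1}\nu(H)$. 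Multiplying the two factors yields $I_H \lesssim \nu(H)$.

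I expect the only genuine point to watch is this bookkeeping: the injectivity of $P \mapsto (Q,F(Q))$, which collapses the arbitrary fibered family $\mathcal{F}$ into a plain Carleson count of $\mathcal{D}$-stopping cubes, together with the $H$-versus-$H^a$ distinction that keeps the operator-side testing/stopping bound available on $H$ itself. Everything else is the same routine mixture of H\"older, geometric decay of the stopping generations, and the testing conditions already used for the sequence $(a_R)$.
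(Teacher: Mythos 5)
Your proposal is correct and follows essentially the same route as the paper: the same reduction to $I_H\lesssim\mu(H)$ via the stopping-cube decomposition and Lemma \ref{mescar}, the same collapse of the triple sum to a single sum over $\mathcal{D}$-stopping cubes $P\subset H$ (regrouped through the maximal $\mathcal{D}$-subcubes of $H$), and the same H\"older argument with $p=s/2$ in the $L^2$ case. The only difference is cosmetic: you make explicit the injectivity $P\mapsto(Q,F(Q))$ and the verification that $\int_H|T^*b^2_{H^a}|^s\,d\nu\lesssim\nu(H)$ via the stopping-time construction, both of which the paper leaves implicit.
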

\begin{proof}
As in the proof of the previous lemma, this reduces to showing that for an arbitrary $H \in \mathcal{D}'$ there holds
\begin{displaymath}
I_H := \mathop{\mathop{\sum_{S \in \mathcal{D}'}}_{S \subset H}}_{S^a = H^a} \mathop{\sum_{Q \in \mathcal{F}}}_{F(Q) = S} \mathop{\sum_{P \in\, \textrm{ch}(Q)}}_{P^a =P} \|\chi_P T^*b^2_{H^a}\|_2^2 \lesssim \mu(H).
\end{displaymath}
Letting $M(H)$ consist of the maximal $Q \in \mathcal{D}$ for which $Q \subset H$, we have
\begin{displaymath}
I_H \le  \mathop{\sum_{Q \in \mathcal{D}}}_{Q \subset H} \mathop{\sum_{P \in\, \textrm{ch}(Q)}}_{P^a =P} \|\chi_P T^*b^2_{H^a}\|_2^2 = \sum_{Q \in M(H)} \mathop{\sum_{U \in \mathcal{D}}}_{U \subset Q}  \mathop{\sum_{P \in\, \textrm{ch}(U)}}_{P^a =P} \|\chi_P T^*b^2_{H^a}\|_2^2.
\end{displaymath}
The $L^{\infty}$ case is again clear from this (recalling Lemma \ref{mescar}). Otherwise, we have as in the proof of the previous lemma that
\begin{align*}
I_H &\le \int \Big( \sum_{Q \in M(H)} \mathop{\sum_{U \in \mathcal{D}}}_{U \subset Q}  \mathop{\sum_{P \in\, \textrm{ch}(U)}}_{P^a =P} \chi_P\Big) \chi_H |T^*b^2_{H^a}|^2\,d\nu \\
&\lesssim \Big( \sum_{Q \in M(H)} \Big\| \mathop{\mathop{\sum_{K \in \mathcal{D}}}_{K \subset Q}}_{K^a = K} \chi_K \Big\|_{p'}^{p'} \Big)^{1/p'} \nu(H)^{1/p} \lesssim \nu(H),
\end{align*}
where $p = s/2>1$.
\end{proof}

\begin{rem}
The proofs of the previous two lemmata are the only places of the paper where we use, in the case of accretive $L^2$ systems, the stronger integrability exponent $s>2$ on the operator side. The lemmata are true with $s=2$, if one always has $\nu(F(Q)) \lesssim \nu(Q)$. Unfortunately, if $F(Q) = S(Q)$, as in the proof of the main theorem, then this does not have to be the case. It does not seem to be easy to arrange the collapse of the paraproduct in such a way that $S(Q)$ would be, say, always precisely $r$ generations larger than $Q$ (and still know how to estimate the bad part to be small).
\end{rem}

The above two lemmata end our proof of the boundedness of the paraproduct.
Recalling that $\Sigma_2 =  \Sigma_{2, \,\textrm{good}}  + \Sigma_{2, \,\textrm{bad}}$, $E|\Sigma_{2, \,\textrm{bad}}| \le \|T\|/16$, and that $\Sigma_{2, \,\textrm{good}}$ decomposes
into the paraproduct and some other terms, all of which we have shown to be bounded, we have established the following proposition.

\begin{prop}
There holds, after fixing the parameter $r$ to be large enough, that
\begin{displaymath}
|E\Sigma_2| \le C + \|T\|/16.
\end{displaymath}
\end{prop}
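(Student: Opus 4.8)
The plan is to collect the estimates proved throughout this section into a single bound, with no genuinely new work required. First I would recall the splitting $\Sigma_2 = \Sigma_{2,\textrm{good}} + \Sigma_{2,\textrm{bad}}$, together with the fact that $r$ has already been fixed so large that $E|\Sigma_{2,\textrm{bad}}| \le \|T\|/16$; thus it only remains to show $|E\Sigma_{2,\textrm{good}}| \le C$. I would emphasise that every bound on $\Sigma_{2,\textrm{good}}$ below is deterministic in the pair of random grids $(\mathcal{D},\mathcal{D}')$, so the expectation passes through it trivially and it suffices to prove $|\Sigma_{2,\textrm{good}}| \le C$ for an arbitrary realization.

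Next I would run through the decomposition of $\Sigma_{2,\textrm{good}}$. For each pair $(Q,R)$ appearing in $\Sigma_{2,\textrm{good}}$ there is a child $R_1$ of $R$ with $Q\subset R_1$ and $d(Q,\partial R_1) > 2n^{1/2}\ell(Q)^\gamma\ell(R)^{1-\gamma}$, and one uses \eqref{eq:decR1a=Ra} if $(R_1)^a = R^a$ and \eqref{eq:decR1a=R1} if $(R_1)^a = R_1$. The terms built from $\chi_{R\setminus R_1}\Delta_R g$ and from $(1-\chi_{R_1})b^2_{R^a}$ were already shown to produce matrix entries dominated, modulo the factors $\|\Delta_Q f\|_2$, $\|\Delta_R g\|_2$, $|\langle g\rangle_R|$, by $(\ell(Q)/\ell(R))^{\alpha/2}(\mu(Q)/\mu(R_1))^{1/2}$; to sum them I would invoke \cite[Lemma 6.1]{NTVa}, the square function estimate Proposition \ref{sqest} (and its analogue for $g$ on $\mathcal{D}'$), and Carleson's embedding theorem applied to the sequence $\big(|\langle g\rangle_R|^2\mu(R_1)\big)_{(R_1)^a=R_1}$, obtaining a contribution $\lesssim \|f\|_2\|g\|_2 = 1$.

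The only substantial remaining ingredient is the paraproduct. Here I would carry out the telescoping collapse already described — passing to the restricted good classes $\mathcal{D}_{\textrm{good},\,k}$, the index $\alpha(Q)$ and the cube $S(Q)$, so that the inner $R$-summation collapses and leaves the two displayed sums — and then abstract to a family $\mathcal{F}\subset\mathcal{D}$ with an assigned $F(Q)\in\mathcal{D}'$, $Q\subset F(Q)$. Writing $\Delta_Q f = (\Delta_Q)^2 f + \sum_{P\in\textrm{ch}(Q),\, P^a=P}\varphi_P$, moving $T^*$ onto $b^2_{R^a}$, and applying Cauchy--Schwarz in the $Q$- and $R$-variables bounds the whole paraproduct by $\big(\sum_R |\langle g\rangle_R|^2(a_R + b_R)\big)^{1/2}$ once one uses $\sum_Q\|\Delta_Q f\|_2^2\lesssim 1$ (Proposition \ref{sqest}) and $\sum_Q |\langle f\rangle_Q|^2\sum_{P\in\textrm{ch}(Q),\,P^a=P}\mu(P)\lesssim 1$ (Lemma \ref{mescar} together with Carleson's embedding theorem). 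The two lemmata just proved make $(a_R)$ and $(b_R)$ Carleson sequences, so a final application of Carleson's embedding theorem to $|\langle g\rangle_R|$ gives $\lesssim\|g\|_2 = 1$.

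Adding everything yields $|\Sigma_{2,\textrm{good}}|\le C$, hence $|E\Sigma_2|\le |E\Sigma_{2,\textrm{good}}| + E|\Sigma_{2,\textrm{bad}}| \le C + \|T\|/16$. I do not expect any real obstacle at this level: the genuine difficulty (in particular the use of the integrability exponent $s>2$ in the Carleson property of $(a_R)$, and the reorganisation of goodness that makes the paraproduct collapse) has already been absorbed into the lemmata above. The one point to keep in mind is that the deterministic sums must be carried out in the order in which the decomposition was set up — over the scale gap $k$, and over the unique $R\supset Q$ of each side length — before the expectation is taken.
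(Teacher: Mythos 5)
Your proposal is correct and follows the paper's own route exactly: the proposition is just the assembly of the already-established facts that $E|\Sigma_{2,\textrm{bad}}|\le\|T\|/16$ after fixing $r$, and that every piece of $\Sigma_{2,\textrm{good}}$ (the long-range and boundary terms from \eqref{eq:decR1a=Ra} and \eqref{eq:decR1a=R1} via \cite[Lemma 6.1]{NTVa} and Carleson embedding, plus the collapsed paraproduct via the Carleson property of $(a_R)$ and $(b_R)$) is bounded by a constant uniformly in the grids. Your observation that the good-part bound is deterministic, so the expectation passes through trivially, is the right way to finish.
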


\section{Adjacent cubes of comparable size}\label{sec:adjacent}
We shall sum over those $Q \in \mathcal{D}$, $R \in \mathcal{D}'$ for which $2^{-r}\ell(R) < \ell(Q) \le \ell(R)$ and $d(Q,R) \le 2n^{1/2}\ell(Q)^{\gamma}\ell(R)^{1-\gamma}$.
For a given $R$, there are only boundedly many such $Q$. Thus, this reduces to considering a finite number of subseries
\begin{displaymath}
\sum_{R \in \mathcal{D}'} \langle T(\Delta_Q f), \Delta_R g\rangle,
\end{displaymath}
where $Q = Q(R)$. Moreover, one may assume that $R \mapsto Q(R)$ is invertible.

There holds
\begin{displaymath}
\langle T(\Delta_Q f), \Delta_R g\rangle =  \sum_{i,j = 1}^{2^n} \langle T(\chi_{Q_i}\Delta_Q f), \chi_{R_j}\Delta_R g\rangle.
\end{displaymath}
If $Q \in \mathcal{D}_k$, one can write
\begin{displaymath}
\chi_{Q_i}\Delta_Q f = \chi_{Q_i}b^1_{Q_i^a}\langle s_k \rangle_{Q_i} + \chi_{Q_i}b^1_{Q_i^a}\langle h_k \rangle_{Q_i} + \chi_{Q_i}b^1_{Q^a}\langle u_k \rangle_{Q_i},
\end{displaymath}
where
\begin{displaymath}
s_k = \chi_{\{b^{a,1}_{k+1} = b^{a,1}_k\}} \Big( \frac{E_{k+1}f}{E_{k+1} b^{a,1}_{k+1}} - \frac{E_k f}{E_k b^{a,1}_k}\Big),
\end{displaymath}
\begin{displaymath}
h_k = \chi_{\{b^{a,1}_{k+1} \ne b^{a,1}_k\}} \frac{E_{k+1}f}{E_{k+1} b^{a,1}_{k+1}}, \qquad u_k = -\chi_{\{b^{a,1}_{k+1} \ne b^{a,1}_k\}} \frac{E_k f}{E_k b^{a,1}_k}.
\end{displaymath}
Here we interpret
\begin{displaymath}
\{b^{a,1}_{k+1} = b^{a,1}_k\} = \mathop{\bigcup_{Q \in \mathcal{D}_{k+1}}}_{Q^a = (Q^{(1)})^a} Q, \qquad \{b^{a,1}_{k+1} \ne b^{a,1}_k\} = \mathop{\bigcup_{Q \in \mathcal{D}_{k+1}}}_{Q^a = Q} Q.
\end{displaymath}
Hence, we can dominate our series with nine summands of the form
\begin{displaymath}
\sum_{i,j =1}^{2^n} \Big| \sum_{R \in \mathcal{D}'} \langle f_Q \rangle_{Q_i} \langle T(\chi_{Q_i}\varphi_{Q, i}), \chi_{R_j}\psi_{R, j}\rangle \langle g_R\rangle_{R_j} \Big|,
\end{displaymath}
where $\langle f_Q \rangle_{Q_i} = \langle \chi_Q f_k \rangle_{Q_i} = \langle f_k \rangle_{Q_i}$ (if $Q \in \mathcal{D}_k$), and the summands are determined by the choices
\begin{displaymath}
(f_k, \varphi_{Q,i}) \in \{(s_k, b^1_{Q_i^a}), (h_k, b^1_{Q_i^a}), (u_k, b^1_{Q^a})\}
\end{displaymath}
and analogous choices for $g$. Observe that in each case we have
\begin{equation*}
  \|\chi_{Q_i}\varphi_{Q,i}\|_2+ \|\chi_{Q_i}T\varphi_{Q,i}\|_2\lesssim\mu(Q_i)^{1/2}
\end{equation*}
by the construction of the stopping time, using doubling in the case of accretive $L^2$ systems.

We fix the parameters $i, j$ now.

\begin{lem}
There holds
\begin{displaymath}
\Big( \sum_{Q \in \mathcal{D}} \mu(Q_i) |\langle f_Q \rangle_{Q_i}|^2 \Big)^{1/2} \lesssim \|f\|_2.
\end{displaymath}
\end{lem}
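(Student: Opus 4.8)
The claim is a square-function / Carleson-type bound for the coefficients $\langle f_Q\rangle_{Q_i}$ arising from the three different choices $f_k\in\{s_k,h_k,u_k\}$. The plan is to treat the three cases separately, showing in each case that the quantity $\mu(Q_i)|\langle f_k\rangle_{Q_i}|^2$, summed over $Q\in\mathcal{D}$ and the fixed child index $i$, is controlled by $\|f\|_2^2$. For fixed $i$, the cubes $Q_i$ (as $Q$ ranges over $\mathcal{D}_k$, all $k$) are exactly the cubes $P\in\mathcal{D}$ of the corresponding generations, each appearing once, so it is enough to bound $\sum_{P\in\mathcal{D}}\mu(P)|\langle f_k\rangle_P|^2$ with $k$ determined by $\ell(P)=2^{-k-1}\ell(Q_0)$ — i.e.\ $f_k$ is evaluated at the generation of $P$. (A harmless bookkeeping point: $\langle f_Q\rangle_{Q_i}=\langle f_k\rangle_{Q_i}$ since $\chi_Q f_k=f_k$ on $Q_i$.)

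\textbf{The $s_k$ term.} On $\{b^{a,1}_{k+1}=b^{a,1}_k\}$ we have, by definition of $E^{a,1}_k$, that $\chi_P s_k$ is exactly (a child-restriction of) $\Delta_{P^{(1)}}f/b^1_{P^a}$ type quantity; more precisely $b^{a,1}_k\langle s_k\rangle_P = \langle f\rangle_P/\langle b^1_{P^a}\rangle_P - \langle f\rangle_{P^{(1)}}/\langle b^1_{P^a}\rangle_{P^{(1)}}$ when $P^a=(P^{(1)})^a$, which is precisely the ``diagonal'' ($(P)^a=P^a$) part of $\Delta_{P^{(1)}}f$ written in coefficient form. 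So $\sum_P\mu(P)|\langle s_k\rangle_P|^2\lesssim \sum_Q\|\Delta_Q f\|_2^2\lesssim\|f\|_2^2$ by Proposition~\ref{sqest} (in the $L^2$ case; in the $L^\infty$ case this is even more standard). One has to keep track of the accretivity lower bound $|\langle b^1_{P^a}\rangle_P|\gtrsim 1$ from the stopping time to pass between $\langle f\rangle_P$ and $\langle f\rangle_P/\langle b^1_{P^a}\rangle_P$, but this is built into the construction.

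\textbf{The $h_k$ and $u_k$ terms.} Both are supported on $\{b^{a,1}_{k+1}\ne b^{a,1}_k\}=\bigcup_{P\in\mathcal{D}_{k+1},\,P^a=P}P$, i.e.\ they are nonzero only when $P$ itself is a stopping cube. For such $P$, $\langle u_k\rangle_P = -\langle f\rangle_{P^{(1)}}/\langle b^1_{(P^{(1)})^a}\rangle_{P^{(1)}}$ and $\langle h_k\rangle_P = \langle f\rangle_P/\langle b^1_P\rangle_P$; in both cases the relevant average of $b^1$ is bounded below by accretivity, so $|\langle u_k\rangle_P|\lesssim|\langle f\rangle_{P^{(1)}}|\lesssim Mf$ on $P$ and $|\langle h_k\rangle_P|\lesssim|\langle f\rangle_P|\lesssim Mf$. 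Hence $\sum_P\mu(P)|\langle h_k\rangle_P|^2 + \sum_P\mu(P)|\langle u_k\rangle_P|^2\lesssim\sum_{P=P^a}\mu(P)|\langle f\rangle_{P^{\sharp}}|^2$ where $P^\sharp$ is $P$ or $P^{(1)}$; this is $\lesssim\|f\|_2^2$ by Carleson's embedding theorem applied to the sequence $\mu(P)$ restricted to stopping cubes $P=P^a$, which is Carleson by Lemma~\ref{mescar} (for $u_k$ one first dominates $\mu(P)\le\mu(P^{(1)})$ using doubling, or just notes $P^{(1)}$ ranges over a bounded-overlap family).

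\textbf{Main obstacle.} The only genuinely delicate point is the $s_k$ term: one must verify that $b^{a,1}_k\langle s_k\rangle_P$ really is the coefficient of the diagonal piece of a single martingale difference $\Delta_{P^{(1)}}f$ and not something that mixes generations, so that Proposition~\ref{sqest} (rather than the false dual estimate of Example~\ref{counterex}) applies — in particular one should double-check that the restriction to $\{b^{a,1}_{k+1}=b^{a,1}_k\}$ exactly kills the off-diagonal contribution and leaves the honest $\Delta_Q f$ square sum. Everything else is a routine application of accretivity lower bounds, the maximal function bound on $L^2$, Carleson embedding, and Lemma~\ref{mescar}.
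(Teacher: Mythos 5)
Your proof is correct and follows essentially the same route as the paper: split according to whether the child is a stopping cube, identify the $s_k$ coefficient with the diagonal part of a martingale difference and invoke the square function estimate, and handle the $h_k,u_k$ coefficients via the Carleson property of the stopping cubes (Lemma~\ref{mescar}) and Carleson embedding. The only cosmetic difference is that the paper passes to the operator $\tilde\Delta_Q$ (the martingale difference without the multiplying $b$-functions) and notes its square function bound is proved as in Proposition~\ref{sqest}, whereas you compare directly with $\|\Delta_Q f\|_2^2$, which also works once one notes that the stopping-time accretivity gives $\int_P|b^1_{P^a}|^2\,d\mu\gtrsim\mu(P)$ via Cauchy--Schwarz.
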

\begin{proof}
If $Q_i^a = Q^a$, we have $\mu(Q_i) |\langle f_Q \rangle_{Q_i}|^2 \le \|\tilde \Delta_Q f\|_2^2$, where $\tilde \Delta_Q$ is the operator $\Delta_Q$ without the multiplying $b$ functions:
\begin{displaymath}
\tilde \Delta_Q f = \sum_{Q' \in \, \textrm{ch}(Q)} \Big[\frac{\langle f \rangle_{Q'}}{\langle b^1_{(Q')^a}\rangle_{Q'}} - \frac{\langle f \rangle_Q}{\langle b^1_{Q^a}\rangle_Q}\Big]\chi_{Q'}.
\end{displaymath}
Otherwise, we have the bound
$\mu(Q_i) |\langle f_Q \rangle_{Q_i}|^2 \lesssim \mu(Q_i) [|\langle f \rangle_{Q_i}|^2 + |\langle f \rangle_Q|^2]$. Stopping cubes (those cubes $H$ for which $H^a = H$) form a Carleson sequence, and the estimate
\begin{displaymath}
\sum_{Q \in \mathcal{D}} \|\tilde \Delta_Q f\|_2^2 \lesssim \|f\|_2^2
\end{displaymath}
is shown in the same way as Proposition~\ref{sqest}, so we are done.
\end{proof}

\subsection{Surgery}
We then begin the delicate surgery part of the argument -- this is done a bit differently than in \cite{NTVa} (e.g. the concept of badly intersected cubes is not needed). Also, the $L^2$ test function case needs several modifications.

To handle the various separated terms that we shall encounter in a unified manner, estimates in the spirit of the following lemma are useful (this is a small modification of \cite[Lemma 9.3]{HM}).
\begin{lem}
Let $S_1$ and $S_2$ be two sets so that we have $d(S_1) \sim d(S_2)$ and $d(S_1, S_2) \gtrsim \delta \min(d(S_1), d(S_2))$. Suppose we are also given functions $\varphi$ and $\psi$ supported on $S_1$ and $S_2$ respectively. Then there holds that
\begin{displaymath}
|\langle T\varphi, \psi \rangle| \lesssim \delta^{-d} \|\varphi\|_2\|\psi\|_2.
\end{displaymath}
\end{lem}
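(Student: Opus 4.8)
The plan is to exploit the separation $d(S_1,S_2)\gtrsim\delta\min(d(S_1),d(S_2))$ together with the kernel size estimate. Since $\varphi$ is supported on $S_1$ and $\psi$ on $S_2$, and these sets are disjoint, we may write
\begin{displaymath}
\langle T\varphi,\psi\rangle = \int_{S_2}\int_{S_1} K(x,y)\varphi(y)\,d\mu(y)\,\overline{\psi(x)}\,d\mu(x),
\end{displaymath}
so that $|\langle T\varphi,\psi\rangle|\le \int_{S_2}\int_{S_1} |K(x,y)||\varphi(y)||\psi(x)|\,d\mu(y)\,d\mu(x)$. For $x\in S_2$ and $y\in S_1$ we have $|x-y|\ge d(S_1,S_2)\gtrsim\delta\min(d(S_1),d(S_2))$, so the kernel bound gives $|K(x,y)|\le C/\lambda(x,|x-y|)\lesssim 1/\lambda(x,\delta\min(d(S_1),d(S_2)))$. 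Using the doubling property $\lambda(x,2r)\le C_\lambda\lambda(x,r)$ iterated $\sim\log_2(1/\delta)$ times, one gets $\lambda(x,\min(d(S_1),d(S_2)))\lesssim\delta^{-d}\lambda(x,\delta\min(d(S_1),d(S_2)))$ (recall $d=\log_2 C_\lambda$), hence $|K(x,y)|\lesssim \delta^{-d}/\lambda(x,\min(d(S_1),d(S_2)))$. Here I also want to replace the argument of $\lambda$ by (a constant times) $d(S_1)$, which is legitimate since $d(S_1)\sim d(S_2)\sim\min(d(S_1),d(S_2))$ up to $\lesssim 1$ factors and by doubling again; moreover, by the symmetrized form of $\lambda$ recorded in the preliminaries ($\lambda(x,r)\sim\lambda(z,r)$ for $|x-z|\le r$, and here $|x-z|\lesssim d(S_1)\lesssim$ the argument), this is comparable to $\lambda(z,\diam S_1)$ for any fixed center $z$.

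The next step is to estimate the measures. Since $\varphi$ and $\psi$ live on $S_1,S_2$ which have diameters $\sim d(S_1)$, and since $S_1,S_2$ are at distance $\gtrsim\delta d(S_1)$ but both within a ball of radius $\lesssim d(S_1)$ (as $d(S_1,S_2)\lesssim d(S_1)+d(S_2)\lesssim d(S_1)$), each of $S_1,S_2$ is contained in a ball of radius $\lesssim d(S_1)$; thus $\mu(S_1),\mu(S_2)\le\mu(B)\le\lambda(z,Cd(S_1))\lesssim\lambda(z,d(S_1))$ by upper doubling and the doubling of $\lambda$. Therefore, writing $\Lambda:=\lambda(z,d(S_1))$ with $z$ a center of $S_1$, we obtain $|K(x,y)|\lesssim\delta^{-d}/\Lambda$ uniformly for $x\in S_2$, $y\in S_1$, and then by Cauchy--Schwarz in each integral,
\begin{align*}
|\langle T\varphi,\psi\rangle|
&\lesssim \frac{\delta^{-d}}{\Lambda}\Big(\int_{S_1}|\varphi|\,d\mu\Big)\Big(\int_{S_2}|\psi|\,d\mu\Big)\\
&\le \frac{\delta^{-d}}{\Lambda}\,\mu(S_1)^{1/2}\|\varphi\|_2\,\mu(S_2)^{1/2}\|\psi\|_2
\lesssim \frac{\delta^{-d}}{\Lambda}\,\Lambda\,\|\varphi\|_2\|\psi\|_2 = \delta^{-d}\|\varphi\|_2\|\psi\|_2.
\end{align*}

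I do not anticipate a serious obstacle; this is a routine kernel-size plus Cauchy--Schwarz estimate, and it is essentially \cite[Lemma 9.3]{HM}. The one point requiring a little care is bookkeeping the various applications of the doubling property of $\lambda$: one loses a factor $\delta^{-d}$ exactly once, from passing between the scales $\delta\min(d(S_1),d(S_2))$ and $\min(d(S_1),d(S_2))$ in the kernel bound, while all the other scale adjustments (between $d(S_1)$, $d(S_2)$, $\diam S_i$, and the radii of the enclosing balls) cost only absolute constants because of the hypotheses $d(S_1)\sim d(S_2)$ and $d(S_1,S_2)\lesssim d(S_1)+d(S_2)$. A minor additional remark: if $T$ is only \emph{a priori} bounded rather than the kernel representation being valid on all of $S_1\times S_2$, one still has the integral representation here because $\operatorname{spt}\varphi\subset S_1$ is disjoint from $S_2\supset\operatorname{spt}\psi$, so the definition of a Calder\'on--Zygmund operator applies directly.
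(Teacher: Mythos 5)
The paper itself does not prove this lemma; it is quoted as a small modification of \cite[Lemma 9.3]{HM}. Your argument (kernel size bound, doubling of $\lambda$ to pass between the scales $\delta\min(d(S_1),d(S_2))$ and $d(S_1)$ at the cost of $\delta^{-d}=C_\lambda^{\log_2(1/\delta)}$, upper doubling to compare $\lambda$ with $\mu(S_i)$, and Cauchy--Schwarz) is exactly the intended routine proof, and the bookkeeping of the doubling losses is right. The integral representation is also correctly justified: the supports are at positive distance.

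One step, however, is asserted without justification and is false as stated: $d(S_1,S_2)\lesssim d(S_1)+d(S_2)$ does \emph{not} follow from the hypotheses, which only give a \emph{lower} bound on $d(S_1,S_2)$; two sets of comparable diameter can be arbitrarily far apart. You use this both to place $S_2$ in a ball of radius $\lesssim d(S_1)$ around a center $z$ of $S_1$ and to invoke the symmetry $\lambda(x,r)\sim\lambda(z,r)$. The gap is harmless because the complementary regime is easier, but it needs a sentence. A clean uniform fix: for $x\in S_2$, $y\in S_1$ one has $|x-y|\ge d(S_1,S_2)\gtrsim\delta d(S_1)\sim\delta d(S_2)$, and by the symmetrized form of $\lambda$ (applicable since trivially $|x-y|\le|x-y|$) one has $\lambda(x,|x-y|)\sim\lambda(y,|x-y|)$; hence, by monotonicity, doubling and upper doubling,
\begin{displaymath}
\lambda(x,|x-y|)\gtrsim\delta^{d}\,\lambda(x,2d(S_2))\gtrsim\delta^{d}\,\mu(S_2),
\qquad
\lambda(x,|x-y|)\sim\lambda(y,|x-y|)\gtrsim\delta^{d}\,\mu(S_1),
\end{displaymath}
so that $|K(x,y)|\lesssim\delta^{-d}\mu(S_1)^{-1/2}\mu(S_2)^{-1/2}$ uniformly, without ever needing an upper bound on $d(S_1,S_2)$. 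Cauchy--Schwarz then finishes exactly as in your last display. (In the paper's applications the sets are pieces of adjacent cubes of comparable size, so your extra assumption does hold there, but the lemma as stated is more general.)
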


Let $\eta > 0$. Define $\delta^{\eta}_Q = (1+\eta)Q \setminus (1-\eta)Q$. If $R \in \mathcal{D}'$ and $Q = Q(R) \in \mathcal{D}$, we set
\begin{displaymath}
Q_{i, \partial} = Q_i \cap \delta^{\eta}_{R_j}, \,\,\, Q_{i, s} = Q_i \setminus Q_{i, \partial} \setminus (Q_i \cap R_j), \,\,\, \Delta_{Q_i} = (Q_i \cap R_j) \setminus Q_{i, \partial},
\end{displaymath}
and define the analogous sets also for $R$. (The subscript $s$ refers to separation from $R_j$.) Of course, e.g. $Q_{i, \partial}$ depends also on $j$, but the dependence is suppressed, as $j$ is considered fixed here, in any case.
We may then decompose
\begin{align*}
\langle T(\chi_{Q_i}\varphi_{Q, i}), \chi_{R_j}\psi_{R, j}\rangle &= \langle T(\chi_{Q_i}\varphi_{Q, i}), \chi_{R_{j,s}}\psi_{R, j}\rangle \\
&+ \langle T(\chi_{Q_i}\varphi_{Q, i}), \chi_{R_{j, \partial}}\psi_{R, j}\rangle \\
&+ \langle T(\chi_{Q_{i, \partial}}\varphi_{Q, i}), \chi_{\Delta_{R_j}}\psi_{R, j}\rangle \\
&+ \langle T(\chi_{Q_{i, s}}\varphi_{Q, i}), \chi_{\Delta_{R_j}}\psi_{R, j}\rangle \\
&+ \langle T(\chi_{\Delta_{Q_i}}\varphi_{Q, i}), \chi_{\Delta_{R_j}}\psi_{R, j}\rangle.
\end{align*}

\subsubsection{Arguments involving $\eta$-boundary regions}
We always have that $\|\chi_{Q_i}\varphi_{Q,i}\|_2 \lesssim \mu(Q_i)^{1/2}$. Thus, by separation,
\begin{equation*}
  |\langle T(\chi_{Q_i}\varphi_{Q, i}), \chi_{R_{j,s}}\psi_{R, j}\rangle|
  + |\langle T(\chi_{Q_{i, s}}\varphi_{Q, i}), \chi_{\Delta_{R_j}}\psi_{R, j}\rangle| \lesssim_{\eta} \mu(Q_i)^{1/2}\mu(R_j)^{1/2}.
\end{equation*}

The relevant series with these matrix elements are then bounded by
\begin{align*}
  C(\eta)\sum_{R \in \mathcal{D}'} &\mu(Q_i)^{1/2} |\langle f_Q \rangle_{Q_i}| \cdot \mu(R_j)^{1/2} |\langle g_R\rangle_{R_j}| \\
  &\le  C(\eta)\Big( \sum_{Q \in \mathcal{D}} \mu(Q_i) |\langle f_Q \rangle_{Q_i}|^2 \Big)^{1/2} \Big( \sum_{R \in \mathcal{D}'} \mu(R_j) |\langle g_R \rangle_{R_j}|^2 \Big)^{1/2} \\
  & \lesssim C(\eta) \|f\|_2\|g\|_2 = C(\eta).
\end{align*}

Next, we have
\begin{equation*}
  |\langle T(\chi_{Q_i}\varphi_{Q, i}), \chi_{R_{j, \partial}}\psi_{R, j}\rangle| \lesssim \|T\| \mu(Q_i)^{1/2} \|\chi_{\delta^{\eta}_{Q_i}}\chi_{R_j}\psi_{R,j}\|_2. 
\end{equation*}
Thus, there holds
\begin{align*}
E_{\mathcal{D}}\Big| \sum_{R \in \mathcal{D}'} & \langle f_Q \rangle_{Q_i} \langle T(\chi_{Q_i}\varphi_{Q, i}), \chi_{R_{j, \partial}}\psi_{R, j}\rangle \langle g_R\rangle_{R_j} \Big| \\
&\le \|T\| E_{\mathcal{D}}\Big( \sum_{Q \in \mathcal{D}} \mu(Q_i) |\langle f_Q \rangle_{Q_i}|^2 \Big)^{1/2} \Big( \sum_{R \in \mathcal{D}'} \|\chi_{\delta^{\eta}_{Q_i}}\chi_{R_j}\psi_{R,j}\|_2^2 |\langle g_R \rangle_{R_j}|^2 \Big)^{1/2} \\
&\lesssim \|T\| \|f\|_2E_{\mathcal{D}}\Big( \sum_{R \in \mathcal{D}'} \|\chi_{\delta^{\eta}_{Q_i}}\chi_{R_j}\psi_{R,j}\|_2^2 |\langle g_R \rangle_{R_j}|^2 \Big)^{1/2} \\
&\lesssim \|T\| \|f\|_2 \Big( \sum_{R \in \mathcal{D}'} \|E_{\mathcal{D}}(\chi_{\delta^{\eta}_{Q_i}})\chi_{R_j}\psi_{R,j}\|_2^2 |\langle g_R \rangle_{R_j}|^2 \Big)^{1/2} \\
&\le c(\eta) \|T\| \|f\|_2 \Big( \sum_{R \in \mathcal{D}'} \mu(R_j) |\langle g_R \rangle_{R_j}|^2 \Big)^{1/2} \\
&\le c(\eta) \|T\| \|f\|_2 \|g\|_2 = c(\eta) \|T\|,
\end{align*}
where $c(\eta) \to 0$ if $\eta \to 0$. A similar estimate holds also with the matrix element $\langle T(\chi_{Q_{i, \partial}}\varphi_{Q, i}), \chi_{\Delta_{R_j}}\psi_{R, j}\rangle$.

We are left to deal with $\langle T(\chi_{\Delta_{Q_i}}\varphi_{Q, i}), \chi_{\Delta_{R_j}}\psi_{R, j}\rangle$. Choose $j(\eta) \in \Z$ so that $\eta/64 \le 2^{j(\eta)} < \eta/32$. Let $\mathcal{D}^*$
be another independent grid (e.g. choose a large cube $U_0$ at random so that $Q_0 \cup R_0 \subset U_0$ always, and use that as the starting cube of the grid $\mathcal{D}^*$).
Let $s = 2^{j(\eta)}\ell(Q_i)$ and $G = G(R) = \mathcal{D}^*_{-\log_2 s}$. 

We enlarge the sets $\Delta_{Q_i}$ and $\Delta_{R_j}$ to obtain new sets $\Delta_{Q_i}^G$ and $\Delta_{R_j}^G$ so that $\Delta_{Q_i}^G \cap \Delta_{R_j}^G = \bigcup\{g:\, g \in G, g \subset \Delta_{Q_i}^G \cap \Delta_{R_j}^G\}$. This is done so that
$\Delta_{Q_i} = \Delta_{Q_i}^G \setminus \Delta_{Q_i}^{\partial}$, $\Delta_{R_j} = \Delta_{R_j}^G \setminus \Delta_{R_j}^{\partial}$, where $\Delta_{Q_i}^{\partial} \subset Q_{i, \partial}$ and $\Delta_{R_j}^{\partial} \subset R_{j, \partial}$. Furthermore, we may
perform this so that $5g \subset Q_i \cap R_j$ if $g \subset \Delta_{Q_i}^G \cap \Delta_{R_j}^G$.

Let us now write
\begin{align*}
\langle T(\chi_{\Delta_{Q_i}}\varphi_{Q, i}), \chi_{\Delta_{R_j}}\psi_{R, j}\rangle = \langle& T(\chi_{\Delta_{Q_i}^G}\varphi_{Q, i}), \chi_{\Delta_{R_j}^G}\psi_{R, j}\rangle \\
&- \langle T(\chi_{\Delta_{Q_i}^G}\varphi_{Q, i}), \chi_{\Delta_{R_j}^{\partial}}\psi_{R, j}\rangle - \langle T(\chi_{\Delta_{Q_i}^{\partial}}\varphi_{Q, i}), \chi_{\Delta_{R_j}}\psi_{R, j}\rangle.
\end{align*}
The series which has the sum of the last two terms as its matrix element is, after averaging, dominated by $c(\eta)\|T\|$ by the very same argument used above. We fix at this point
$\eta$ to be so small that the above four $\eta$-boundary region terms contribute no more than $c(\eta)C\|T\| < \|T\|/32$.
\subsubsection{Arguments involving $\epsilon$-boundary regions}
We are reduced to consider the pairing $\langle T(\chi_{\Delta_{Q_i}^G}\varphi_{Q, i}), \chi_{\Delta_{R_j}^G}\psi_{R, j}\rangle$. Let $\epsilon > 0$, and set $G_{\epsilon} = G_{\epsilon}(R) = \bigcup_{g \in G} \delta^{\epsilon}_g$, where
$\delta^{\epsilon}_g = (1+\epsilon)g \setminus (1-\epsilon)g$.

We define $\Delta'_{Q_i} = \Delta^G_{Q_i} \cap G_{\epsilon}$ and $\tilde \Delta_{Q_i} = \Delta_{Q_i}^G \setminus G_{\epsilon}$ (and similarly for $R$).
We then write
\begin{align*}
\langle T(\chi_{\Delta_{Q_i}^G}\varphi_{Q, i}), \chi_{\Delta_{R_j}^G}\psi_{R, j}\rangle = \langle& T(\chi_{\tilde \Delta_{Q_i}}\varphi_{Q, i}), \chi_{\tilde \Delta_{R_j}}\psi_{R, j}\rangle \\
&+ \langle T(\chi_{\Delta_{Q_i}^G}\varphi_{Q, i}), \chi_{\Delta_{R_j}'}\psi_{R, j}\rangle + \langle T(\chi_{\Delta_{Q_i}'}\varphi_{Q, i}), \chi_{\tilde \Delta_{R_j}}\psi_{R, j}\rangle.
\end{align*}
There holds $|\langle T(\chi_{\Delta_{Q_i}^G}\varphi_{Q, i}), \chi_{\Delta_{R_j}'}\psi_{R, j}\rangle| \lesssim \|T\| \mu(Q_i)^{1/2} \|\chi_{G_{\epsilon}}\chi_{R_j}\psi_{R,j}\|_2$. Again, we have
$E_{\mathcal{D}^*} \chi_{G_{\epsilon}}(x) \le c(\epsilon)$, where $c(\epsilon) \to 0$ when $\epsilon \to 0$. Therefore, the series which has the sum of the last two terms as its matrix element is, after averaging, dominated by $c(\epsilon)\|T\|$.

We are now left with $\langle T(\chi_{\tilde \Delta_{Q_i}}\varphi_{Q, i}), \chi_{\tilde \Delta_{R_j}}\psi_{R, j}\rangle$. It suffices to consider
pairings $\langle T(\chi_{g_1}\chi_{\tilde \Delta_{Q_i}}\varphi_{Q, i}), \chi_{g_2}\chi_{\tilde \Delta_{R_j}}\psi_{R, j}\rangle$ for $g_1, g_2 \in G$,
as there are only boundedly many (depending on $\eta$ -- but this is fixed) cubes in $G$ which matter.
Suppose first that $g_1 \ne g_2$. Then, because of separation,
$|\langle T(\chi_{g_1}\chi_{\tilde \Delta_{Q_i}}\varphi_{Q, i}), \chi_{g_2}\chi_{\tilde \Delta_{R_j}}\psi_{R, j}\rangle| \lesssim_{\epsilon}\mu(Q_i)^{1/2}\mu(R_j)^{1/2}$.
This implies, like above, that the relevant series with this matrix element is dominated by $C(\epsilon)\|f\|_2\|g\|_2 = C(\epsilon)$.

This time we are left with $T(\chi_g\chi_{\tilde \Delta_{Q_i}}\varphi_{Q, i}), \chi_g\chi_{\tilde \Delta_{R_j}}\psi_{R, j}\rangle$ for some $g \in G$. We may write this in the form
$\langle T(\chi_H \varphi_{Q, i}), \chi_H \psi_{R, j}\rangle$, where $H = g \setminus G_{\epsilon} \subset \Delta_{Q_i}^G \cap \Delta_{R_j}^G$ is a cube (otherwise the pairing vanishes by construction). We continue to decompose
\begin{align*}
\langle T(\chi_H \varphi_{Q, i}), \chi_H \psi_{R, j}\rangle &= \langle T(\varphi_{Q, i}), \chi_H \psi_{R, j}\rangle \\
&- \langle T(\chi_{\R^n \setminus 5H} \varphi_{Q, i}), \chi_H \psi_{R, j}\rangle \\
&- \langle T(\chi_{5H \setminus (1+\epsilon)H} \varphi_{Q, i}), \chi_H \psi_{R, j}\rangle \\
&- \langle T(\chi_{(1+\epsilon)H \setminus H} \varphi_{Q, i}), \chi_H \psi_{R, j}\rangle.
\end{align*}
We have $|\langle T(\varphi_{Q, i}), \chi_H \psi_{R, j}\rangle| \le \|\chi_{Q_i}T(\varphi_{Q, i})\|_2\|\chi_{R_j}\psi_{R,j}\|_2 \lesssim \mu(Q_i)^{1/2}\mu(R_j)^{1/2}$. We have by separation (recall that $\ell(H) \sim \ell(Q_i) \sim \ell(R_j))$ and the fact that $5H \subset Q_i \cap R_j$
that $|T(\chi_{5H \setminus (1+\epsilon)H} \varphi_{Q, i}), \chi_H \psi_{R, j}\rangle| \lesssim_{\epsilon} \mu(Q_i)^{1/2}\mu(R_j)^{1/2}$. Also, there yet again holds that
$|\langle T(\chi_{(1+\epsilon)H \setminus H} \varphi_{Q, i}), \chi_H \psi_{R, j}\rangle| \lesssim \|T\| \|\chi_{G_{\epsilon}}\chi_{Q_i} \varphi_{Q,i}\|_2 \mu(R_j)^{1/2}$, which
is fine after averaging as before.

Having disposed of the terms above, we are to handle $\langle T(\chi_{\R^n \setminus 5H} \varphi_{Q, i}), \chi_H \psi_{R, j}\rangle$. We let $\tau = T(\chi_{\R^n \setminus 5H} \varphi_{Q, i})$ and
$\beta_H = \langle b_H^2/\mu(H), \tau\rangle$, and then decompose
\begin{displaymath}
\langle \tau, \chi_H \psi_{R, j}\rangle = \langle \tau - \beta_H, \chi_H \psi_{R, j}\rangle + \beta_H \int_H \psi_{R,j} \,d\mu.
\end{displaymath}
\begin{lem}
We have
\begin{displaymath}
|\langle \tau - \beta_H, \chi_H \psi_{R, j}\rangle| \lesssim \mu(Q_i)^{1/2}\mu(R_j)^{1/2}.
\end{displaymath}
\end{lem}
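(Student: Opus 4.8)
The plan is to exploit the cancellation encoded in $\beta_H$. Since $\int_H b^2_H\,d\mu=\mu(H)$, for every $x\in H$ we may write
\[
\tau(x)-\beta_H=\frac{1}{\mu(H)}\int_H b^2_H(y)\,\bigl(\tau(x)-\tau(y)\bigr)\,d\mu(y),
\]
so the whole matter reduces to controlling the oscillation of $\tau$ on $H$. Now $\tau=T(\chi_{Q_i\setminus 5H}\varphi_{Q,i})$ is, on $H$, given by an absolutely convergent kernel integral, and for $x,y\in H$ and $z\in Q_i\setminus 5H$ one has $|x-z|\ge d(H,\R^n\setminus 5H)=2\ell(H)\gtrsim|x-y|$; hence the H\"older estimate on $K$ applies and, pulling out the factor $|x-y|^{\alpha}\lesssim\ell(H)^{\alpha}$,
\[
|\tau(x)-\tau(y)|\lesssim\ell(H)^{\alpha}\int_{\R^n\setminus B(x,\rho)}\frac{|\varphi_{Q,i}(z)|}{|x-z|^{\alpha}\lambda(x,|x-z|)}\,d\mu(z),\qquad \rho:=2\ell(H).
\]

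For accretive $L^{\infty}$ systems this is immediate: $|\varphi_{Q,i}|\lesssim 1$, so by \cite[Lemma 2.4]{HM} the integral is $\lesssim\rho^{-\alpha}\sim\ell(H)^{-\alpha}$, giving $|\tau(x)-\tau(y)|\lesssim 1$ for $x,y\in H$; since $\|b^2_H\|_{L^{\infty}(\mu)}\lesssim 1$ this yields $\|\chi_H(\tau-\beta_H)\|_{\infty}\lesssim 1$, whence $|\langle\tau-\beta_H,\chi_H\psi_{R,j}\rangle|\le\|\chi_H(\tau-\beta_H)\|_{\infty}\,\mu(H)^{1/2}\,\|\chi_{R_j}\psi_{R,j}\|_2\lesssim\mu(H)^{1/2}\mu(R_j)^{1/2}\le\mu(Q_i)^{1/2}\mu(R_j)^{1/2}$, using $H\subset Q_i$.

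For accretive $L^2$ systems $\varphi_{Q,i}$ is no longer bounded, so I apply Cauchy--Schwarz in $z$, using $\|\chi_{Q_i}\varphi_{Q,i}\|_2\lesssim\mu(Q_i)^{1/2}$ (here the doubling of $\nu$ enters, through the stopping time, exactly as already recorded) together with the elementary bound
\[
\int_{\R^n\setminus B(x,\rho)}\frac{d\mu(z)}{|x-z|^{2\alpha}\lambda(x,|x-z|)^2}\le\frac{1}{\rho^{\alpha}\lambda(x,\rho)}\int_{\R^n\setminus B(x,\rho)}\frac{d\mu(z)}{|x-z|^{\alpha}\lambda(x,|x-z|)}\lesssim\frac{1}{\rho^{2\alpha}\lambda(x,\rho)}.
\]
This gives $|\tau(x)-\tau(y)|\lesssim\lambda(x_H,\ell(H))^{-1/2}\mu(Q_i)^{1/2}$ for all $x,y\in H$, where $x_H$ is the centre of $H$ and $\lambda(x,\rho)\sim\lambda(x_H,\ell(H))$ uniformly on $H$. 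Inserting the displayed formula for $\tau(x)-\beta_H$ into $\|\chi_H(\tau-\beta_H)\|_2^2$, applying Cauchy--Schwarz in $y$ against the bound $\frac{1}{\mu(H)}\int_H|b^2_H|^2\,d\mu\lesssim 1$, and then using the uniform oscillation estimate,
\[
\|\chi_H(\tau-\beta_H)\|_2^2\lesssim\frac{1}{\mu(H)}\int_H\!\int_H|\tau(x)-\tau(y)|^2\,d\mu(y)\,d\mu(x)\lesssim\frac{\mu(H)\,\mu(Q_i)}{\lambda(x_H,\ell(H))}\lesssim\mu(Q_i),
\]
the last step because $\mu(H)\le\mu\bigl(B(x_H,\sqrt n\,\ell(H))\bigr)\le\lambda(x_H,\sqrt n\,\ell(H))\lesssim\lambda(x_H,\ell(H))$ by the doubling of $\lambda$. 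Hence $|\langle\tau-\beta_H,\chi_H\psi_{R,j}\rangle|\le\|\chi_H(\tau-\beta_H)\|_2\,\|\chi_{R_j}\psi_{R,j}\|_2\lesssim\mu(Q_i)^{1/2}\mu(R_j)^{1/2}$.

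The computation is essentially routine once organized this way; the only points needing a little care are the bookkeeping of the dilation constant (that $5H$, after the dimensional factor between side length and diameter is absorbed, leaves enough room for the kernel H\"older estimate) and making sure the $L^2$-accretivity bound $\int_{Q_i}|\varphi_{Q,i}|^2\,d\mu\lesssim\mu(Q_i)$ is genuinely in force here --- both of which are already guaranteed by the construction of the stopping time.
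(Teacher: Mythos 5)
Your overall strategy coincides with the paper's: use $\mu(H)^{-1}\int_H b^2_H\,d\mu=1$ to write $\tau(x)-\beta_H$ as an average of oscillations $\tau(x)-\tau(y)$, control these by the H\"older estimate on $K$, and finish with Cauchy--Schwarz against $\psi_{R,j}$. Your treatment of the accretive $L^\infty$ case is correct (the dilation-constant bookkeeping you mention is harmless), and your first factor $\mu(H)^{-1}\int_H|b^2_H|^2\,d\mu\lesssim 1$ is the same as the paper's.

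In the accretive $L^2$ case, however, there is a genuine gap, and it starts with your identification of $\tau$. The paper sets $\tau=T(\chi_{\R^n\setminus 5H}\varphi_{Q,i})$, not $T(\chi_{Q_i\setminus 5H}\varphi_{Q,i})$: the function $\varphi_{Q,i}$ is $b^1_{Q_i^a}$ or $b^1_{Q^a}$, supported on a stopping ancestor which may be vastly larger than $Q_i$. Your uniform oscillation bound comes from a global Cauchy--Schwarz in $z$ and therefore needs $\|\chi_{\R^n\setminus 5H}\varphi_{Q,i}\|_{L^2(\mu)}\lesssim\mu(Q_i)^{1/2}$; but only the restriction $\chi_{Q_i}\varphi_{Q,i}$ obeys such a bound, while the full norm is only $\lesssim\mu(Q^a)^{1/2}$, which is not comparable to $\mu(Q_i)^{1/2}$. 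So the contribution of $\varphi_{Q,i}$ outside $Q_i$ is simply unaccounted for, and it cannot be recovered by the same device: since $L^2$ test functions carry no local normalization on subsets of their supporting cube, the mass of $|b^1_{Q^a}|^2$ in the first few annuli around $H$ (outside $Q_i$) is not pointwise controlled relative to $\lambda(x_H,\ell(H))$, so no uniform (in $x$) oscillation bound of the required size is available. This is exactly why the paper instead bounds $|\tau(x)-\tau(y)|\lesssim M_\mu\varphi_{Q,i}(x)$ --- an $x$-dependent bound that sees all of $\varphi_{Q,i}$ --- and then uses $\int_{Q_i}|M_\nu\varphi_{Q,i}|^2\,d\nu\lesssim\nu(Q_i)$, which is precisely the clause $\int_Q|M_\nu b^1|^2\,d\nu\le\delta^{-1}\nu(Q)$ built into the stopping-time construction (together with doubling for the stopping cubes themselves). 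Your argument never invokes this maximal-function stopping condition, which is the essential ingredient here; to repair the proof you should either switch to the paper's maximal-function estimate, or explicitly split off and separately estimate $T(\chi_{\R^n\setminus Q_i}\varphi_{Q,i})$, which again seems to require the maximal function.
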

\begin{proof}
There holds
\begin{displaymath}
|\langle \tau - \beta_H, \chi_H \psi_{R, j}\rangle| \le \int |\chi_H(x)\psi_{R,j}(x)||\tau(x) - \beta_H|\,d\mu(x).
\end{displaymath}
We have (using $\frac{1}{\mu(H)} \int_H b_H^2\,d\mu = 1$) that
\begin{align*}
|\tau(x) - \beta_H| &= \Big|\tau(x) - \frac{1}{\mu(H)}\int_H b_H^2(y)\tau(y)\,d\mu(y)\Big| \\
&= \Big|\frac{1}{\mu(H)}\int_H b_H^2(y)[\tau(x)-\tau(y)]\,d\mu(y)\Big|\\
&\le \frac{1}{\mu(H)}\int_H |b_H^2(y)||\tau(x)-\tau(y)|\,d\mu(y).
\end{align*}
Furthermore, we have for $x,y \in H$ that
\begin{align*}
|\tau(x)-\tau(y)| &\le \int_{\R^n \setminus 5H} |K(x,z) - K(y,z)| |\varphi_{Q, i}(z)| \,d\mu(z) \\
&\lesssim \ell(H)^{\alpha} \int_{|x-z| > c\ell(H)} \frac{|\varphi_{Q, i}(z)|}{|x-z|^{\alpha}\lambda(x, |x-z|)}\,d\mu(z) \lesssim M_{\mu}\varphi_{Q,i}(x).
\end{align*}
Thus, we have
\begin{align*}
|\tau(x) - \beta_H| &\lesssim  \Big(\frac{1}{\mu(H)}\int_H |b_H^2(y)|\,d\mu(y)\Big)M_{\mu}\varphi_{Q,i}(x) \\
&\le \Big(\frac{1}{\mu(H)}\int_H |b_H^2(y)|^2\,d\mu(y)\Big)^{1/2}M_{\mu}\varphi_{Q,i}(x) \lesssim M_{\mu}\varphi_{Q,i}(x).
\end{align*}
We conclude that
\begin{align*}
|\langle \tau - \beta_H, \chi_H \psi_{R, j}\rangle| &\lesssim \int |\chi_H(x)\psi_{R,j}(x)| |\chi_H(x)M_{\mu}\varphi_{Q,i}(x)|\,d\mu(x) \\
&\le \Big( \int_{Q_i} |M_{\mu}\varphi_{Q,i}|^2\,d\mu\Big)^{1/2} \Big( \int_{R_j} |\psi_{R,j}|^2\,d\mu\Big)^{1/2} \lesssim \mu(Q_i)^{1/2}\mu(R_j)^{1/2},
\end{align*}
where the last estimate follows by noting that in the $L^{\infty}$ case $|M_{\mu}\varphi_{Q,i}| \le \|\varphi_{Q,i}\|_{\infty} \lesssim 1$, and that in the $L^2$ case this also works out by the stopping time and the doubling property of the measure.
\end{proof}

Finally, we are to deal with $\beta_H \int_H \psi_{R,j}\,d\mu$. The absolute value of this is dominated by
\begin{displaymath}
|\beta_H| \mu(H) \Big( \frac{1}{\mu(H)} \int_{H} |\psi_{R_j}|^2\,d\mu\Big)^{1/2} \lesssim |\beta_H| \mu(H),
\end{displaymath}
where we used that $H \subset Q_i \cap R_j$ and that in the doubling case $\mu = \nu$ we have $\nu(R) \lesssim \nu(R_j) \lesssim \nu(H)$. We then write
\begin{align*}
\beta_H \mu(H) &= \langle b_H^2, T(\varphi_{Q,i})\rangle 
 - \langle b_H^2, T(\chi_{5H \setminus (1+\epsilon)H} \varphi_{Q,i})\rangle \\
&\qquad- \langle b_H^2, T(\chi_{(1+\epsilon)H \setminus H} \varphi_{Q,i})\rangle 
 - \langle b_H^2, T(\chi_H \varphi_{Q,i})\rangle.
\end{align*}

We can now deal with all of these remaining terms (after which we are finally left with nothing more to estimate). Let us do this now.
Recalling that $5H\subset Q_i$, we have
\begin{align*}
  & |\langle b_H^2, T(\varphi_{Q,i})\rangle| \le \|b_H^2\|_2\|\chi_{Q_i}T(\varphi_{Q,i})\|_2 \lesssim \mu(H)^{1/2}\mu(Q_i)^{1/2}, \\
  & |\langle b_H^2, T(\chi_{5H \setminus (1+\epsilon)H} \varphi_{Q,i})\rangle| \\
  &\qquad\lesssim_{\epsilon} \Big( \int_H |b^2_H|^2\,d\mu\Big)^{1/2} \Big( \int_{Q_i} |\varphi_{Q,i}|^2\,d\mu\Big)^{1/2}\lesssim \mu(H)^{1/2}\mu(Q_i)^{1/2}, \\
  & |\langle b_H^2, T(\chi_{(1+\epsilon)H \setminus H} \varphi_{Q,i})\rangle| \lesssim \|T\| \mu(H)^{1/2} \|\chi_{G_{\epsilon}}\chi_{Q_i}  \varphi_{Q,i}\|_2
\end{align*}
and
\begin{align*}
 |\langle b_H^2, T(\chi_H \varphi_{Q,i})\rangle| 
 &= |\langle T^*b_H^2, \chi_H\varphi_{Q,i}\rangle| \\
 &\le \|\chi_HT^*b_H^2\|_2 \|\chi_{Q_i} \varphi_{Q,i}\|_2 \lesssim \mu(H)^{1/2}\mu(Q_i)^{1/2}.
\end{align*}

Thus, using $H\subset R_j$, we have shown that
\begin{align*}
\Big|\beta_H \int_H \psi_{R,j}\,d\mu\Big| &\le C(\epsilon)\mu(H)^{1/2}\mu(Q_i)^{1/2} + C\|T\| \mu(H)^{1/2} \|\chi_{G_{\epsilon}}\chi_{Q_i}  \varphi_{Q,i}\|_2 \\
&\le C(\epsilon)\mu(R_j)^{1/2}\mu(Q_i)^{1/2} + C\|T\| \mu(R_j)^{1/2} \|\chi_{G_{\epsilon}}\chi_{Q_i}  \varphi_{Q,i}\|_2.
\end{align*}
Furthermore, there holds
\begin{equation*}
  E_{\mathcal{D}^*}\|\chi_{G_{\epsilon}}\chi_{Q_i}  \varphi_{Q,i}\|_2
  \le c(\epsilon)\|\chi_{Q_i} \varphi_{Q_i}\|_2\  
\lesssim c(\epsilon)\mu(Q_i)^{1/2}.
\end{equation*}

We have proved that the series with the coefficient $\langle T(\chi_{\R^n \setminus 5H} \varphi_{Q, i}), \chi_H \psi_{R, j}\rangle$ can be dominated by $C(\epsilon) + c(\epsilon)\|T\|$.
We now fix $\epsilon$ to be so small (this depends on the already fixed parameter $\eta$) that all of the $\epsilon$-boundary region terms
contribute no more than $c(\epsilon)C\|T\| < \|T\|/32$.

We have proven the following proposition in this section.

\begin{prop}
There holds
\begin{displaymath}
|E\Sigma_3| \le C+ \|T\|/16.
\end{displaymath}
\end{prop}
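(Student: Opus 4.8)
The plan is to bound $\Sigma_3$, the part of the main series coming from pairs $Q\in\mathcal D$, $R\in\mathcal D'$ with $2^{-r}\ell(R)<\ell(Q)\le\ell(R)$ and $d(Q,R)\le 2n^{1/2}\ell(Q)^{\gamma}\ell(R)^{1-\gamma}$. Since for each $R$ there are only $\lesssim_r 1$ admissible $Q$, I first split $\Sigma_3$ into boundedly many subseries $\sum_{R\in\mathcal D'}\langle T(\Delta_Q f),\Delta_R g\rangle$ with a fixed invertible assignment $R\mapsto Q=Q(R)$ for which $\ell(Q)\sim\ell(R)\sim\ell(Q)+d(Q,R)$. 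Restricting $\Delta_Q f$ to a child $Q_i$ and using the stopping-time-adapted splitting through $s_k,h_k,u_k$, I write $\chi_{Q_i}\Delta_Q f$ as a sum of three terms of the shape $\chi_{Q_i}\varphi_{Q,i}\,\langle f_Q\rangle_{Q_i}$, where $\|\chi_{Q_i}\varphi_{Q,i}\|_2+\|\chi_{Q_i}T\varphi_{Q,i}\|_2\lesssim\mu(Q_i)^{1/2}$ (this is where the construction of the stopping time, and doubling in the $L^2$ case, is used) and $\big(\sum_{Q}\mu(Q_i)|\langle f_Q\rangle_{Q_i}|^2\big)^{1/2}\lesssim\|f\|_2$ by the square function estimate for the unweighted differences $\tilde\Delta_Q$. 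Doing the same for $g$ with $\mathcal D'$, $T^*$, and $(b^2_R)$, I reduce $E\Sigma_3$ to controlling $O(1)$ bilinear forms $\sum_{R}\langle f_Q\rangle_{Q_i}\langle T(\chi_{Q_i}\varphi_{Q,i}),\chi_{R_j}\psi_{R,j}\rangle\langle g_R\rangle_{R_j}$ with these normalizations.

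The core is the surgery. Introducing a third independent dyadic grid $\mathcal D^*$, I cut the localized pair via $Q_{i,\partial},Q_{i,s},\Delta_{Q_i}$ (and the analogues for $R$) into five pieces. On the $\eta$-separated pieces the separation lemma gives a matrix element $\lesssim_\eta\mu(Q_i)^{1/2}\mu(R_j)^{1/2}$; paired against the square-summable coefficients by Cauchy--Schwarz these contribute only $C(\eta)\|f\|_2\|g\|_2=C(\eta)$. On the $\eta$-boundary pieces I bound the matrix element by $\|T\|\,\mu(Q_i)^{1/2}\|\chi_{\delta^{\eta}_{Q_i}}\chi_{R_j}\psi_{R,j}\|_2$ and use $E_{\mathcal D}\chi_{\delta^{\eta}_{Q_i}}\lesssim\eta$, so after averaging in the relevant random grid this piece is $c(\eta)\|T\|$ with $c(\eta)\to0$. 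I then replace $\Delta_{Q_i},\Delta_{R_j}$ by their $\mathcal D^*$-aligned enlargements (errors are again $\eta$-boundary terms), peel off the $\epsilon$-boundary regions $G_\epsilon$ of the aligning cubes (errors $c(\epsilon)\|T\|$), discard off-diagonal pairs of aligning cubes by separation, and am left with $\langle T(\chi_H\varphi_{Q,i}),\chi_H\psi_{R,j}\rangle$ for a cube $H$ with $5H\subset Q_i\cap R_j$.

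For this diagonal term I peel $\langle T\varphi_{Q,i},\chi_H\psi_{R,j}\rangle$ (bounded by $\|\chi_{Q_i}T\varphi_{Q,i}\|_2\|\psi_{R,j}\|_2\lesssim\mu(Q_i)^{1/2}\mu(R_j)^{1/2}$), a few $\epsilon$-boundary corrections, and $\langle T(\chi_{\R^n\setminus 5H}\varphi_{Q,i}),\chi_H\psi_{R,j}\rangle$. For the last, with $\tau=T(\chi_{\R^n\setminus 5H}\varphi_{Q,i})$ and $\beta_H=\langle b^2_H/\mu(H),\tau\rangle$, I write $\langle\tau,\chi_H\psi_{R,j}\rangle=\langle\tau-\beta_H,\chi_H\psi_{R,j}\rangle+\beta_H\int_H\psi_{R,j}\,d\mu$: the first summand is $\lesssim\mu(Q_i)^{1/2}\mu(R_j)^{1/2}$ by the kernel Hölder estimate on $\R^n\setminus 5H$ (giving $|\tau(x)-\beta_H|\lesssim M_\mu\varphi_{Q,i}(x)$), and the second is $\lesssim|\beta_H|\mu(H)$ with $\beta_H\mu(H)=\langle b^2_H,T\varphi_{Q,i}\rangle$ up to $\epsilon$-boundary corrections and the term $\langle\chi_H T^*b^2_H,\varphi_{Q,i}\rangle$ --- here the operator-side testing hypothesis $\|\chi_H T^*b^2_H\|_2\lesssim\mu(H)^{1/2}$, together with $\|b^2_H\|_2\lesssim\mu(H)^{1/2}$ and $\|\chi_{Q_i}T\varphi_{Q,i}\|_2\lesssim\mu(Q_i)^{1/2}$ (all uniform by the stopping time, and doubling in the $L^2$ case), yields the bound $\mu(H)^{1/2}\mu(Q_i)^{1/2}\lesssim\mu(Q_i)^{1/2}\mu(R_j)^{1/2}$.

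To conclude, every piece is either of Schur type $\lesssim_{\eta,\epsilon}\mu(Q_i)^{1/2}\mu(R_j)^{1/2}$, which sums against the square-summable coefficients to $C(\eta,\epsilon)\|f\|_2\|g\|_2=C(\eta,\epsilon)$, or of boundary type carrying a factor $\|T\|$ whose random average is $c(\eta)\|T\|$ or $c(\epsilon)\|T\|$. Fixing $\eta$ small and then $\epsilon$ small (depending on $\eta$), the boundary-type contribution totals at most $\|T\|/16$ and the Schur-type contribution is an admissible constant, giving $|E\Sigma_3|\le C+\|T\|/16$. I expect the real work to be the surgery bookkeeping: arranging the enlargements $\Delta^G$ and $G_\epsilon$ so that the auxiliary grid $\mathcal D^*$ does average the boundary regions down, ensuring throughout that $5H\subset Q_i\cap R_j$ so the kernel estimates apply, and isolating the \emph{one} place (the non-averaged term $\langle\chi_H T^*b^2_H,\varphi_{Q,i}\rangle$) where the full testing condition on the operator side is needed; in the $L^2$ test function regime one additionally needs doubling and Hölder/maximal-function arguments to keep all the local $L^2$ norms of $\varphi_{Q,i}$, $T\varphi_{Q,i}$, $b^2_H$ and $T^*b^2_H$ under control by $\mu(\cdot)^{1/2}$.
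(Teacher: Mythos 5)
Your proposal is correct and follows essentially the same route as the paper: the same reduction to subseries $R\mapsto Q(R)$, the same $s_k,h_k,u_k$ splitting with the normalizations $\|\chi_{Q_i}\varphi_{Q,i}\|_2+\|\chi_{Q_i}T\varphi_{Q,i}\|_2\lesssim\mu(Q_i)^{1/2}$ and the square function bound on the coefficients, the same surgery with a third independent grid, $\eta$- and $\epsilon$-boundary regions averaged away against a factor $\|T\|$, and the same final treatment of $\langle T(\chi_{\R^n\setminus 5H}\varphi_{Q,i}),\chi_H\psi_{R,j}\rangle$ via $\tau$, $\beta_H$ and the operator-side testing condition. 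You have also correctly identified where doubling and the testing hypotheses enter, so there is nothing to add.
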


\begin{rem}
We still detail on the term $\mu(Q_0)^{-1/2}\mu(R_0)^{-1/2}|\langle Tb^1_{Q_0}, b^2_{R_0}\rangle|$, which we did not yet estimate in Section~\ref{sec:randomcubes}.
On the right hand side of the pairing write
\begin{displaymath}
\chi_{R_0} = \chi_{R_0 \cap Q_0} + \chi_{(R_0 \setminus Q_0) \cap \delta^{\theta}_{Q_0}} + \chi_{R_0 \setminus Q_0 \setminus \delta^{\theta}_{Q_0}}
\end{displaymath}
with some small parameter $\theta > 0$. Notice that to estimate the first pairing thus formed is this time trivial, since it can be bounded by
$\|\chi_{Q_0}Tb^1_{Q_0}\|\|b^2_{R_0}\|_2$, and this is dominated by $\mu(Q_0)^{1/2}\mu(R_0)^{1/2}$. The third pairing so formed can be bounded using separation, and this yields
$C(\theta)\mu(Q_0)^{1/2}\mu(R_0)^{1/2}$. Finally, we have
\begin{align*}
  E_{\mathcal{D}} [ \mu(Q_0)^{-1/2}\mu(R_0)^{-1/2}|\langle Tb^1_{Q_0}, \chi_{(R_0 \setminus Q_0) \cap \delta^{\theta}_{Q_0}} b^2_{R_0}\rangle|]
  &\le C\|T\|E_{\mathcal{D}}[\mu(R_0)^{-1/2} \|\chi_{\delta^{\theta}_{Q_0}} b^2_{R_0}\|_2] \\
  &\le c(\theta)C\|T\| < \|T\|/12
\end{align*}
fixing $\theta$ to be small enough.
\end{rem}

\section{Completion of the proof}
Collecting the above estimates for $\Sigma_i$, $i=1,2,3$, (and for their symmetric counterparts and for the term $\mu(Q_0)^{-1/2}\mu(R_0)^{-1/2}|\langle Tb^1_{Q_0}, b^2_{R_0}\rangle|$), we have established that
\begin{align*}
\|T\|/2 &\le |\langle Tf, g\rangle| \le C + \|T\|/3,
\end{align*}
and from this we may conclude that $\|T\| \le 6C$.
\bibliographystyle{amsalpha}
\bibliography{metric}

\newcommand{\etalchar}[1]{$^{#1}$}
\def\cprime{$'$} \def\cprime{$'$} \def\cprime{$'$}
\providecommand{\bysame}{\leavevmode\hbox to3em{\hrulefill}\thinspace}
\providecommand{\MR}{\relax\ifhmode\unskip\space\fi MR }
\providecommand{\MRhref}[2]{%
  \href{http://www.ams.org/mathscinet-getitem?mr=#1}{#2}
}
\providecommand{\href}[2]{#2}
\begin{thebibliography}{AHM{\etalchar{+}}02}

\bibitem[AHM{\etalchar{+}}02]{AHMTT}
P.~Auscher, S.~Hofmann, C.~Muscalu, T.~Tao, and C.~Thiele, \emph{Carleson
  measures, trees, extrapolation, and {T}(b) theorems}, Publicacions
  Matem\'atiques \textbf{46} (2002), no.~2, 257--325.

\bibitem[AR10]{AR}
P.~Auscher and E.~Routin, \emph{Local {$Tb$} theorems and {H}ardy
  inequalities}, manuscript (2010).

\bibitem[AY09]{AY}
Pascal Auscher and Qi~Xiang Yang, \emph{{BCR} algorithm and the {T}(b)
  theorem}, Publ. Mat. \textbf{53} (2009), no.~1, 179--196.

\bibitem[Chr90]{Ch}
Michael Christ, \emph{A {$T(b)$} theorem with remarks on analytic capacity and
  the {C}auchy integral}, Colloq. Math. \textbf{60/61} (1990), no.~2, 601--628.

\bibitem[HM09]{HM}
Tuomas Hyt\"onen and Henri Martikainen, \emph{Non-homogeneous {Tb} theorem and
  random dyadic cubes on metric measure spaces}, preprint (2009),
  arXiv:0911.4387.

\bibitem[Hof07]{Ho}
Steve Hofmann, \emph{A proof of the local {T}b theorem for standard
  {C}alder\'on-{Z}ygmund operators}, unpublished manuscript (2007),
  arXiv:0705.0840.

\bibitem[Hyt09]{Hy2}
Tuomas Hyt\"onen, \emph{The vector-valued non-homogeneous {Tb} theorem},
  preprint (2009), arXiv:0809.3097.

\bibitem[Hyt10a]{Hy1}
Tuomas Hyt{\"o}nen, \emph{A framework for non-homogeneous analysis on metric
  spaces, and the {RBMO} space of {T}olsa}, Publ. Mat. \textbf{54} (2010),
  no.~2, 485--504.

\bibitem[Hyt10b]{Hy3}
Tuomas Hyt\"onen, \emph{The sharp weighted bound for general
  {C}alder\'on-{Z}ygmund operators}, preprint (2010), arXiv:1007.4330.

\bibitem[HYY10]{HYY}
Tuomas Hyt\"onen, Dachun Yang, and Dongyong Yang, \emph{The {H}ardy space
  ${H}^1$ on non-homogeneous metric spaces}, preprint (2010), arXiv:1008.3831.

\bibitem[Mar10]{Ma1}
Henri Martikainen, \emph{Vector-valued non-homogeneous {Tb} theorem on metric
  measure spaces}, preprint (2010), arXiv:1004.3176.

\bibitem[NTV97]{NTV:Cauchy}
F.~Nazarov, S.~Treil, and A.~Volberg, \emph{Cauchy integral and
  {C}alder\'on-{Z}ygmund operators on nonhomogeneous spaces}, Internat. Math.
  Res. Notices (1997), no.~15, 703--726.

\bibitem[NTV02]{NTVa}
\bysame, \emph{Accretive system {Tb}-theorems on nonhomogeneous spaces}, Duke
  Math. J. \textbf{113} (2002), no.~2, 259--312.

\bibitem[NTV03]{NTV}
\bysame, \emph{The {$Tb$}-theorem on non-homogeneous spaces}, Acta Math.
  \textbf{190} (2003), no.~2, 151--239.

\bibitem[TY09]{TY}
C.~Tan and L.~Yan, \emph{Local {Tb} theorem on spaces of homogeneous type}, Z.
  Anal. Anwend. \textbf{28} (2009), no.~3, 333--347.

\end{thebibliography}
\end{document}